\documentclass[11pt,letterpaper]{article}
\usepackage{graphicx}
\usepackage{epsfig}
\usepackage{amssymb}
\usepackage{amsthm}
\usepackage{amsfonts}
\usepackage{amsmath}
\usepackage{multicol,multirow}
\usepackage[margin=1in]{geometry}

\usepackage[T1]{fontenc}
\usepackage[utf8]{inputenc}
\usepackage{authblk}

\usepackage{enumitem}
\usepackage{subcaption}
\usepackage{hhline}

\newtheorem{theorem}{Theorem}
\newtheorem{lemma}{Lemma}
\newtheorem{corollary}[lemma]{Corollary}
\newtheorem{proposition}[lemma]{Proposition}
\newtheorem{remark}[lemma]{Remark}
\newtheorem{example}[lemma]{Example}
\newtheorem{definition}[lemma]{Definition}

\newcommand{\plant}{\Gamma}
\newcommand{\obs}{\mathfrak{G}}
\newcommand{\Ap}{A_{\plant}}
\newcommand{\Ag}{A_{\obs}}
\newcommand{\Sym}{\mathbb{S}}
\newcommand{\tr}{\mathrm{trace}}
\newcommand{\sh}{\mathcal{SH}}
\newcommand{\R}{\mathbb{R}}
\newcommand{\eig}{\mathcal{E}}
\newcommand{\proj}{\mathcal{P}}
\newcommand{\egap}{\mathrm{eigengap}}
\newcommand{\norm}[1]{\left\lVert#1\right\rVert} % From UTKAN
\newcommand{\krank}[1]{\mathrm{kruskal}(#1)}
\DeclareMathOperator*{\argmin}{arg\,min}
\DeclareMathOperator*{\argmax}{arg\,max}

\title{Finding Planted Subgraphs with Few Eigenvalues using the Schur-Horn Relaxation \thanks{The authors were supported in part by NSF Career award CCF-1350590 and by Air Force Office of Scientific Research grant
FA9550-14-1-0098.}}

%\author[1]{Utkan Onur Candogan\thanks{Email: utkan@caltech.edu}} 
%\author[2]{Venkat Chandrasekaran\thanks{Email: venkatc@caltech.edu}}
%\affil[1]{Department of Electrical Engineering, California Institute of Technology, Pasadena, CA. Email: utkan@caltech.edu}   
%\affil[2]{Departments of Computing and Mathematical Sciences and of Electrical Engineering, California Institute of Technology, Pasadena, CA. Email: venkatc@caltech.edu }

\author[1]{Utkan Onur Candogan} 
\author[2]{Venkat Chandrasekaran}
\affil[1]{Department of Electrical Engineering, California Institute of Technology, \authorcr Pasadena, CA, 91125. Email: utkan@caltech.edu}   
\affil[2]{Departments of Computing and Mathematical Sciences and of Electrical Engineering, California Institute of Technology, Pasadena, CA, 91125. Email: venkatc@caltech.edu }

%\author{Utkan Onur Candogan \thanks{Email: utkan@caltech.edu, venkatc@caltech.edu} \and Venkat Chandrasekaran$^{\dag,\ddag}$  \vspace{0.25in} \\ $^\dag$ Department of Electrical Engineering\\ $^\ddag$ Department of Computing and Mathematical Sciences \\ California Institute of Technology \\ Pasadena, CA 91125}
%\date{}

\begin{document}

\maketitle

\begin{abstract}
Extracting structured subgraphs inside large graphs -- often known as the planted subgraph problem -- is a fundamental question that arises in a range of application domains.  This problem is NP-hard in general, and as a result, significant efforts have been directed towards the development of tractable procedures that succeed on specific families of problem instances.  We propose a new computationally efficient convex relaxation for solving the planted subgraph problem; our approach is based on tractable semidefinite descriptions of majorization inequalities on the spectrum of a symmetric matrix.  This procedure is effective at finding planted subgraphs that consist of few distinct eigenvalues, and it generalizes previous convex relaxation techniques for finding planted cliques.  Our analysis relies prominently on the notion of \emph{spectrally comonotone} matrices, which are pairs of symmetric matrices that can be transformed to diagonal matrices with sorted diagonal entries upon conjugation by the same orthogonal matrix.
\end{abstract}

\begin{keywords}
convex optimization; distance-regular graphs; induced subgraph isomorphism; majorization; orbitopes; semidefinite programming; strongly regular graphs.
\end{keywords}

\section{Introduction} \label{Intro}

In application domains ranging from computational biology to social data analysis, graphs are frequently used to model relationships among large numbers of interacting entities.  A commonly encountered question across many of these application domains is that of identifying structured subgraphs inside larger graphs.  For example, identifying specific motifs or substructures inside gene regulatory networks is useful in revealing higher-order biological function \cite{artymiuk1994graph,dobrin2004aggregation,mason2007graph}. Similarly, extracting completely connected subgraphs in social networks is useful for determining communities of people that are mutually linked to each other \cite{leskovec2010empirical,mishra2007clustering,radicchi2004defining}.  In this paper, we propose a new algorithm based on convex optimization for finding structured subgraphs inside large graphs, and we give conditions under which our approach succeeds in performing this task.

Formally, suppose $\plant$ and $\obs$ are graphs\footnote{Throughout this paper we consider undirected, unweighted, loopless graphs.} on $k$ nodes and $n$ nodes (here $n > k$), respectively, with the following property: there exists a subset of vertices $V \subset \{1,\dots,n\}$ with $|V| = k$ such that the induced subgraph of $\obs$ corresponding to the vertex set $V$ is isomorphic to $\plant$.    The \emph{planted subgraph} problem is to identify the vertex subset $V$ given the graphs $\obs$ and $\plant$; see Figure \ref{ClebschGraphTogether} for an example.  The decision version of the planted subgraph problem is known as the induced subgraph isomorphism problem in the theoretical computer science literature, and it has been shown to be NP-hard \cite{karp1972reducibility}.  Nevertheless, as this problem arises in a wide range of application domains as described above, significant efforts have been directed towards the development of computationally tractable procedures that succeed on certain families of problem instances.  Much of the focus of this attention has been on the special case of the \emph{planted clique} problem in which the subgraph $\plant$ is fully connected.  Alon et al. \cite{alon1998finding} and Feige and Krauthgamer \cite{feige2000finding} developed a spectral algorithm for the planted clique problem, and subsequently Ames and Vavasis \cite{ames2011nuclear} described an approach based on semidefinite programming with similar performance guarantees to the earlier work based on spectral algorithms. Conceptually, these methods are based on a basic observation about the spectrum of a clique, namely that the adjacency matrix of a clique on $k$ nodes has two distinct eigenvalues, one with multiplicity equal to one and the other with multiplicity equal to $k-1$.  We describe a new semidefinite programming technique that generalizes the method of Ames and Vavasis \cite{ames2011nuclear} to planted subgraphs $\plant$ that are not fully connected, with the spectral properties of $\plant$ playing a prominent role in our algorithm and our analysis.

\begin{figure}%[hbt]
\centering
   \subcaptionbox{\label{ClebschGraph}}{\includegraphics[scale=0.25]{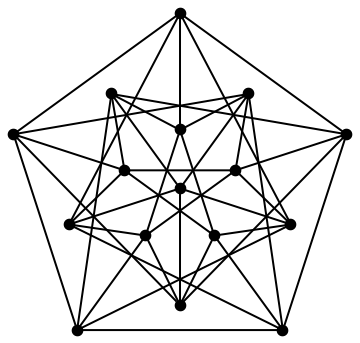}} \hspace{0.5in}%
   \subcaptionbox{\label{ClebschOnPlane}}{\includegraphics[scale=0.13]{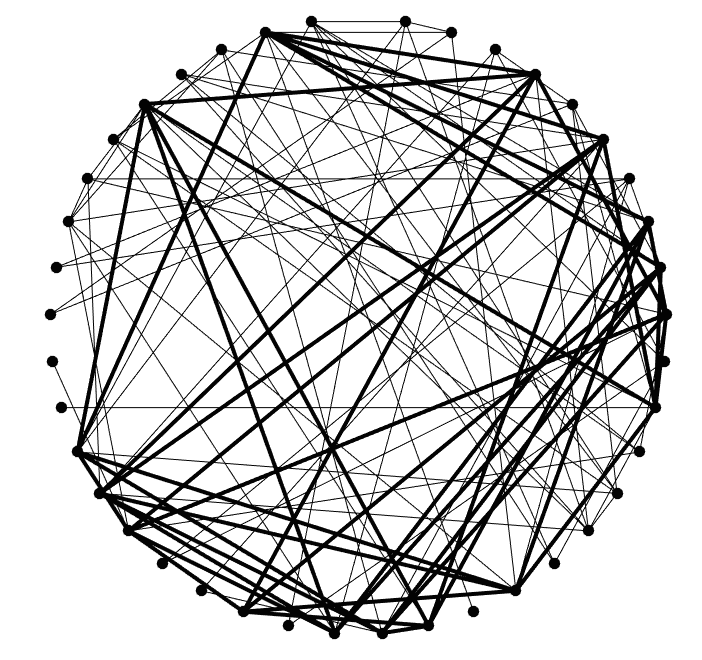}}%\hspace{0em}%
   \caption{The Clebsch graph ($16$ nodes) on the left.  An example on the right of a $40$-node graph containing the Clebsch graph as an induced subgraph; the thick edges correspond to a $16$-node induced subgraph that is isomorphic to the Clebsch graph.}\label{ClebschGraphTogether}
\end{figure}

\subsection{Our Contributions}

Let $\Ap \in \Sym^k$ and $\Ag \in \Sym^n$ represent the adjacency matrices of $\plant$ and of $\obs$, with $\Sym^q$ denoting the space of $q \times q$ real symmetric matrices. Given any matrix $M \in \Sym^k$, we let $[M]_{k \rightarrow n} \in \Sym^n$ for $n > k$ denote an $n \times n$ symmetric matrix with the leading principal minor of order $k$ equal to $M$ and all the other entries equal to zero.  The following combinatorial optimization problem is a natural first approach to phrase the planted subgraph problem in a variational manner:
\begin{equation}\label{Unsolvable Opt}
\begin{aligned}
\hat{A}_{co} = \argmax_{A \in \Sym^n} & ~ \tr(A \cdot A_\obs) \\ \mathrm{s.t.} & ~ A_{i,j} = 0 ~\mathrm{if}~ (A_\obs)_{i,j} = 0 ~\mathrm{and}~ i \neq j \\ & ~ A \in \{\Pi [A_\plant]_{k \rightarrow n} \Pi' ~|~ \Pi ~\mathrm{is~an}~ n \times n ~\mathrm{permutation~matrix} \}.
\end{aligned}
\end{equation}
Assuming that there is no other subgraph of $\obs$ that is isomorphic to $\plant$, one can check that the optimal solution $\hat{A}_{co}$ of this problem identifies the vertices $V \subset \{1,\dots,n\}$ whose induced subgraph in $\obs$ is isomorphic to $\plant$, i.e., the unique optimal solution $\hat{A}_{co}$ is equal to zero everywhere except for the principal minor corresponding to the indices in $V$ and $(\hat{A}_{co})_{V,V} = \tilde{\Pi} A_\plant \tilde{\Pi}'$ for some $k \times k$ permutation matrix $\tilde{\Pi}$.  However, solving ($\ref{Unsolvable Opt}$) is intractable in general.  Replacing the combinatorial constraint $A \in  \{\Pi [A_\plant]_{k \rightarrow n} \Pi' ~|~ \Pi ~\mathrm{is~an}~ n \times n ~\mathrm{permutation~matrix} \}$ with the convex constraint $A \in \mathrm{conv}\{\Pi [A_\plant]_{k \rightarrow n} \Pi' ~|~ \Pi ~\mathrm{is~an}~ n \times n ~\mathrm{permutation~matrix} \}$ does not lead to a tractable problem as checking membership in the polytope $\mathrm{conv}\{\Pi [A_\plant]_{k \rightarrow n} \Pi' ~|~ \Pi ~\mathrm{is~an}~ n \times n ~\mathrm{permutation~matrix} \}$ is intractable for general planted graphs $\plant$ (unless P $=$ NP).

We describe next a convex outer approximation of the set $\{\Pi [A_\plant]_{k \rightarrow n} \Pi' \allowbreak ~|~ \allowbreak \Pi ~\mathrm{is~an}~ \allowbreak n \times n \allowbreak ~\mathrm{permutation}~\allowbreak\mathrm{matrix} \}$ that leads to a tractable convex program.  For any matrix $M \in \Sym^n$, the \emph{Schur-Horn orbitope} $\sh(M) \subset \Sym^n$ is defined as \cite{sanyal2011orbitopes}:
\begin{equation} \label{SH convex hull}
\sh(M) = \mathrm{conv}\{U M U' ~|~ U ~\mathrm{is~an}~ n \times n ~\mathrm{orthogonal~matrix} \}.
\end{equation}
The term `orbitope' was coined by Sanyal, Sottile, and Sturmfels in their work on convex hulls of orbits generated by the actions of groups, and the Schur-Horn orbitope was so named by these authors due to its connection to the Schur-Horn theorem in linear algebra \cite{sanyal2011orbitopes}.  In combinatorial optimization, approximations based on replacing permutations matrices by orthogonal matrices have also been employed to obtain bounds on the Quadratic Assignment Problem \cite{FinBR1987}.  The set $\sh(M)$ depends only on the eigenvalues of $M$, and it is clearly an outer approximation of the set $\{\Pi M \Pi' ~|~ \Pi ~\mathrm{is~an}~ n \times n ~\mathrm{permutation~matrix} \}$.  Crucially for our purposes, the Schur-Horn orbitope $\sh(M)$ for any $M \in \Sym^n$ has a tractable semidefinite description via majorization inequalities on the spectrum of a symmetric matrix \cite{ben2001lectures,sanyal2011orbitopes}; see Section~\ref{subsec:sdpschurhorn}.  Hence, we propose the following tractable semidefinite programming relaxation for the planted subgraph problem:
\begin{equation}\tag{$P$}\label{PrimalOptimization}
\begin{aligned}
\hat{A}_{sh} = \argmax_{A \in \Sym^n} & ~ \tr(A \cdot A_\obs) \\ \mathrm{s.t.} & ~ A_{i,j} = 0 ~\mathrm{if}~ (A_\obs)_{i,j} = 0 ~\mathrm{and}~ i \neq j \\ & ~ A \in \sh([A_\plant - \gamma I_k]_{k \rightarrow n}).
\end{aligned}
\end{equation}
Here $I_k \in \Sym^k$ is the $k \times k$ identity matrix. We refer to this convex program as the \emph{Schur-Horn relaxation}, and this problem can be solved to a desired precision in polynomial time.  This relaxation only requires knowledge of the eigenvalues of the planted graph $\plant$. The parameter $\gamma \in \R$ is to be specified by the user, and we discuss suitable choices for $\gamma$ in the sequel.  Note that changing $A_\plant$ to $A_\plant - \gamma I_k$ in the constraints of \eqref{Unsolvable Opt} essentially leaves that problem unchanged (the nonzero principal minor of the optimal solution simply changes from $\hat{A}_{\mathrm{co}}$ to $\hat{A}_\mathrm{co} - \gamma I_k$).  However, the additional degree of freedom provided by the parameter $\gamma$ plays a more significant role in the Schur-Horn relaxation as it allows for shifts of the spectrum of $A_\plant$ to more favorable values, which is essential for the solution of various planted subgraph problems; see Section \ref{Subsection OptConds} for further details, as well as the experiments in Section \ref{SectionNumerical} for numerical illustrations.  We say that the Schur-Horn relaxation succeeds in recovering the planted subgraph $\plant$ if the optimal solution $\hat{A}_{sh} \in \Sym^n$ satisfies the following conditions: the optimal solution $\hat{A}_{sh}$ is unique, the submatrix $(\hat{A}_{sh})_{V,V} = \tilde{\Pi} A_\plant \tilde{\Pi}' - \gamma I_k$ for some $k \times k$ permutation matrix $\tilde{\Pi}$, and the remaining entries of $\hat{A}_{sh}$ are equal to zero.

In Section \ref{SectionPropertySH} we study the geometric properties of the Schur-Horn orbitope as these pertain to the optimality conditions of the Schur-Horn relaxation.  Our analysis relies prominently on the notion of \emph{spectrally comonotone} matrices, which refers to a pair of symmetric matrices that can be transformed to diagonal matrices with sorted diagonal entries upon conjugation by the same orthogonal matrix.  Spectral comonotonicity is a more restrictive condition than simultaneous diagonalizability, and it enables a precise characterization of the normal cones at extreme points of the Schur-Horn orbitope (Proposition \ref{Spect Comonotone Prop}).  This discussion leads directly to the central observation of our paper that the Schur-Horn relaxation is useful for finding planted graphs $\Gamma$ that consist of \emph{few distinct eigenvalues}.  Cliques form the simplest examples of such graphs as their spectrum consists of two distinct eigenvalues.  There are numerous other graph families whose spectrum consists of few distinct eigenvalues, and the study of such graphs is a significant topic in graph theory \cite{bridges1981multiplicative, doob1970graphs,doob1970characterizing,muzychuk1998graphs,van1995regular, van1996graphs,van1998small}. For example, strongly regular graphs are (an infinite family of) regular graphs with three distinct eigenvalues; the Clebsch graph of Figure \ref{ClebschGraphTogether} is a strongly regular graph on $16$ nodes with eigenvalues in the set $\{5, 1, -3\}$ and degree equal to five.  For a more extensive list of graphs with few eigenvalues, see Section \ref{SubsectionGraphsFewEvals}.

We state and prove the main theoretical result of this paper in Section \ref{Subsection MainResult} -- see Theorem \ref{Main Theorem}.  If the planted subgraph $\plant$ and its complement are both \emph{symmetric} -- $\plant$ and its complement are both vertex- and edge-transitive -- and if $\plant$ is connected, then this theorem takes on a simpler form (Corollary \ref{Main Corollary}).  Specifically, the success of the Schur-Horn relaxation \eqref{PrimalOptimization} relies on the existence of a suitable eigenspace $\eig \subset \R^k$ of $A_\plant$.  Concretely, let $\proj_\eig \in \Sym^{k}$ denote the projection onto $\eig$, and let $\mu(\eig) = \max_{i,j, ~ i\neq j} \frac{|(\proj_\eig)_{i,j}|}{\sqrt{|(\proj_\eig)_{i,i}| |(\proj_\eig)_{j,j}|}}$ denote the coherence of $\eig$.  Assuming that the edges in $\obs$ outside the induced subgraph $\Gamma$ are placed independently and uniformly at random with probability $p \in [0, \frac{1}{\mu(\eig) k})$ (i.e., the Erd\H{o}s-R\'{e}nyi random graph model), we show in Corollary $\ref{Main Corollary}$ that the Schur-Horn relaxation (\ref{PrimalOptimization}) with parameter\footnote{In our experiments in Section \ref{SectionNumerical}, we set $\gamma$ equal to the eigenvalue of $A_\plant$ with the largest multiplicity.  See Section \ref{SectionRecovering} for further discussion.} $\gamma = \lambda_\eig$ (the eigenvalue associated to $\eig$) succeeds with high probability provided:
\begin{equation*}
n \lesssim \min_{\substack{\lambda ~\mathrm{eigenvalue~of}~A_\plant \\ \lambda \neq \lambda_\eig}} \min\Bigg\{ |\lambda-\lambda_\eig|^2 ~ \frac{\dim(\eig)^2 \big( 1-kp\mu(\eig)\big)}{k^2\,p}, \left(|\lambda-\lambda_\eig|-2 |\lambda_\eig| \right)^2 \Bigg\}  + k.
\end{equation*}
The coherence parameter $\mu(\eig)$ lies in $(0,1]$, and it appears prominently in results on sparse signal recovery via convex optimization \cite{donoho2003optimally}.  In analogy to that literature, a small value of $\mu(\eig)$ is useful in our context (informally) to ensure that the planted graph $\plant$ looks sufficiently `different' from the remainder of $\obs$ (see Section \ref{SectionRecovering} for details).  Thus, the Schur-Horn relaxation succeeds if the planted graph $\plant$ consists of few distinct eigenvalues that are well-separated, and in which one of the eigenspaces has a small coherence parameter associated to it.  For more general non-symmetric graphs, our main result (Theorem \ref{Main Theorem}) is stated in terms of a parameter associated to an eigenspace $\eig$ of $A_\plant$ called the \emph{combinatorial width}, which roughly measures the average conditioning over all minors of $\proj_\eig$ of a certain size.

\paragraph{Specialization to the planted clique problem} The sum of the adjacency matrix of a clique and the identity matrix has rank equal to one, and consequently the planted clique problem may be phrased as one of identifying a rank-one submatrix inside a larger matrix (up to shifts of the diagonal by the identity matrix).  In her thesis \cite{fazel2002matrix}, Fazel proposed the nuclear norm as a tractable convex surrogate for identifying low-rank matrices in convex sets, and subsequent efforts provided theoretical support for the effectiveness of this relaxation in a range of rank minimization problems \cite{candes2009exact,recht2010guaranteed}.  Building on these ideas, Ames and Vavasis \cite{ames2011nuclear} proposed a nuclear norm minimization approach for the planted clique problem.  The Schur-Horn relaxation (\ref{PrimalOptimization}) specializes to the relaxation in \cite{ames2011nuclear} when $\plant$ is the clique.  Specifically, letting $A_\textrm{clique} \in \Sym^k$ denote the adjacency matrix of a $k$-clique, one can check that:
\begin{equation} \label{SHClique}
\sh([A_\textrm{clique} + I_k]_{k \rightarrow n}) = \{P \in \mathbb{S}^n ~|~ \mathrm{trace}(P) = k, ~ P \succeq 0\}.
\end{equation}
As the nuclear norm of a positive semidefinite matrix is equal to its trace, the Schur-Horn orbitope $\sh([A_\plant + I_k]_{k \rightarrow n})$ is simply a face of the nuclear norm ball in $\Sym^n$ scaled by a factor $k$.  Thus, the Schur-Horn relaxation (\ref{PrimalOptimization}) with $\gamma=-1$ is effectively a nuclear norm relaxation when the planted subgraph of interest is the clique.\footnote{The nuclear norm relaxation in \cite{ames2011nuclear} is formulated in a slightly different fashion compared to the Schur-Horn relaxation (\ref{PrimalOptimization}) for the case of the planted clique; specifically, one can show that our relaxation succeeds whenever the nuclear norm relaxation in \cite{ames2011nuclear} succeeds.}  Further, our main result (Theorem \ref{Main Theorem}) can be specialized to the case of a planted clique to obtain the main result in \cite{ames2011nuclear}; see Corollary \ref{Corollary Clique}.

\subsection{Paper Outline} \label{Outline}
In Section \ref{SectionPropertySH} we discuss the geometric properties of the Schur-Horn orbitope and their connection to the optimality conditions of the Schur-Horn relaxation, along with an extensive list of families of graphs with few eigenvalues.  Section \ref{SectionRecovering} contains our main theoretical results, while in Section \ref{SectionNumerical} we demonstrate the utility of the Schur-Horn relaxation in practice via numerical experiments.  We conclude in Section \ref{SectionDiscussion} with a discussion of further research directions.

\paragraph{Notation} The \emph{normal cone} at a point $x \in \mathcal{C}$ for a closed, convex set $\mathcal{C} \subset \R^n$ is denoted by $\mathcal{N}_\mathcal{C}(x)$ and it is the collection of linear functionals that attain their maximal value over $\mathcal{C}$ at $x$ \cite{rockafellar2015convex}.  The projection operator onto a subspace $\eig \subset \R^n$ is denoted by $\proj_\eig$.  The restriction of a linear map $A: \R^n \to \R^n$ to an invariant subspace $\eig$ of $A$ is denoted by $A|_{\eig}: \eig \to \eig$. The orthogonal complement of a subspace $\eig$ is denoted by $\eig^\perp$. The notation $\mathrm{dim}(\eig)$ denotes the dimension of a subspace $\eig$. The \emph{eigengap} of a symmetric matrix $M \in \mathbb{S}^n$ associated to an invariant subspace $\eig \subset \R^n$  of $M$ is defined as:
\begin{equation*}
\begin{aligned}
\egap(M, \eig) = \min\big\{|\lambda_\eig - \lambda_{\eig^\perp}| ~\big|~ & \lambda_\eig \text{ an eigenvalue of } M|_\eig, \\ & \lambda_{\eig^\perp} \text{ an eigenvalue of } M|_{\eig^\perp} \big\}.
\end{aligned}
\end{equation*}
The smallest and largest eigenvalues of a symmetric matrix $A$ are represented by $\lambda_{\min}(A)$ and $\lambda_{\max}(A)$, respectively.  The norms $\norm{\cdot}, \norm{\cdot}_2$, and $\norm{\cdot}_F$ denote the vector $\ell_2$ norm, the matrix operator/spectral norm, and the matrix Frobenius norm, respectively.  The vector $1_\ell \in \R^\ell$ denotes the all-ones vector of length $\ell$.  We denote the identity matrix of size $k$ by $I_k$.  The matrix $I_\Omega \in \R^{|\Omega| \,\times\, k}$ denotes the matrix whose rows are the rows of $I_k$ indexed by $\Omega \subset \{1,\dots,\,k\}$, so that the rows of $I_{\Omega} A$ are the rows of $A$ indexed by $\Omega$ for any $A\in \R^{k \times q}$.  The matrix $A_{\Omega,\Omega} \in \R^{|\Omega|\times |\Omega|}$ denotes the principal minor of $A$ indexed by the set $\Omega$.  The group of $n \times n$ orthogonal matrices is denoted by $\mathcal{O}_n \subset \R^{n\times n}$.  The set $\mathrm{relint}(\mathcal{C})$ specifies the relative interior of any set $\mathcal{C}$.  The column space of a matrix $A$ is denoted by $\mathrm{col}(A)$.  The quantity $\mathbb{E}[\cdot]$ denotes the usual expected value, where the distribution is clear from context.

\section{Geometric Properties of the Schur-Horn Orbitope} \label{SectionPropertySH}

In this section, we analyze the optimality conditions of the Schur-Horn relaxation from a geometric perspective. In particular, the notion of a pair of \emph{spectrally comonotone matrices} plays a central role in our development, and we elaborate on this point in the next subsection.  Based on this discussion, we observe that the Schur-Horn relaxation is especially useful for finding planted graphs consisting of few distinct eigenvalues, and we give examples of graphs with this property in Section \ref{SubsectionGraphsFewEvals}.  The main theoretical results formalizing the utility of the Schur-Horn relaxation are presented in Section \ref{SectionRecovering}.

\subsection{Optimality Conditions of the Schur-Horn Relaxation} \label{Subsection OptConds}
We state the optimality conditions of the Schur-Horn relaxation in terms of the normal cones at extreme points of the Schur-Horn orbitope:

\begin{lemma} \label{First Optimality Lemma}
Consider a planted subgraph problem instance in which the nodes of $\obs$ and $\plant$ are labeled so that the leading principal minor of $A_\obs$ of order $k$ is equal to $A_\plant$. Suppose there exists a matrix $M \in \mathbb{S}^n$ with the following properties:
\begin{enumerate}
\item $M_{i,j} = (A_\obs)_{i,j} ~ \mathrm{if}~ (A_\obs)_{i,j} = 1 ~\mathrm{or~if} ~ i= j$,
\item $M \in \mathrm{relint}\left(\mathcal{N}_{\sh([A_\plant - \gamma I_k]_{k \rightarrow n})}([A_\plant - \gamma I_k]_{k \rightarrow n})\right).$
\end{enumerate}
Then the Schur-Horn relaxation succeeds at identifying the planted subgraph $\plant$ inside the larger graph $\obs$, i.e., the unique optimal solution of the convex program (\ref{PrimalOptimization}) is $\hat{A}_{sh} = \begin{pmatrix} A_\plant - \gamma I_k & 0 \\ 0 & 0 \end{pmatrix}$.
\end{lemma}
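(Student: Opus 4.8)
The plan is to treat this as a dual-certificate argument: the matrix $M$ is a certificate of optimality for the candidate $\bar{A} := [A_\plant - \gamma I_k]_{k \rightarrow n} = \begin{pmatrix} A_\plant - \gamma I_k & 0 \\ 0 & 0 \end{pmatrix}$, and the strengthening that $M$ lie in the \emph{relative interior} of the normal cone is exactly what upgrades optimality to uniqueness. Equip $\Sym^n$ with the trace inner product $\langle X, Y \rangle := \tr(XY)$, set $K := \sh(\bar{A})$, and let $\mathcal{L} := \{A \in \Sym^n : A_{i,j} = 0 \text{ whenever } (A_\obs)_{i,j} = 0 \text{ and } i \neq j\}$ be the linear subspace cut out by the support constraint of \eqref{PrimalOptimization}, so that the feasible region is $\mathcal{L} \cap K$. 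First one checks that $\bar{A}$ is feasible: it lies in $K$ trivially, and it satisfies the support constraint since on its leading $k \times k$ block it equals $A_\plant - \gamma I_k$, which off the diagonal has the same support as $A_\plant$ and hence, by the labeling hypothesis, as $A_\obs$, while off that block $\bar{A}$ vanishes.

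The key reduction is that condition~1 lets one replace $A_\obs$ by $M$ in the objective over the feasible set. Indeed, condition~1 says precisely that $A_\obs - M$ vanishes at every diagonal position and at every off-diagonal position where $(A_\obs)_{i,j} = 1$; equivalently, $A_\obs - M$ is supported on exactly the off-diagonal positions that are forced to zero in $\mathcal{L}$, so $A_\obs - M \in \mathcal{L}^\perp$. Hence for every $A \in \mathcal{L}$ --- in particular for every feasible $A$ and for $\bar{A}$ itself --- one has $\tr(A \cdot A_\obs) = \langle A, M \rangle + \langle A, A_\obs - M \rangle = \langle A, M \rangle$.

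Optimality then follows immediately from condition~2: since $M \in \mathcal{N}_{K}(\bar{A})$ we have $\langle A, M \rangle \leq \langle \bar{A}, M \rangle$ for every $A \in K$, so $\tr(A \cdot A_\obs) = \langle A, M \rangle \leq \langle \bar{A}, M \rangle = \tr(\bar{A} \cdot A_\obs)$ for every feasible $A$, i.e., $\bar{A}$ solves \eqref{PrimalOptimization}. For uniqueness, suppose $A$ is feasible and also optimal; then $\langle A, M \rangle = \langle \bar{A}, M \rangle = \max_{B \in K} \langle B, M \rangle$, so $A$ lies in the face $K^{M} := \{B \in K : \langle B, M \rangle = \langle \bar{A}, M \rangle\}$ of $K$ exposed by $M$. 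Here the full strength of condition~2 enters, via the standard fact that a functional in the relative interior of the normal cone of a closed convex set at a point $x$ exposes a face whose relative interior contains $x$; thus $\bar{A} \in \mathrm{relint}(K^{M})$. On the other hand $\bar{A} = I_n \bar{A} I_n'$ lies on the orbit $\{U \bar{A} U' : U \in \mathcal{O}_n\}$ and is therefore an extreme point of $K$, hence an extreme point of the face $K^{M}$ as well. A convex set possessing a point that is simultaneously extreme and relatively interior must equal that single point, so $K^{M} = \{\bar{A}\}$ and $A = \bar{A} = \begin{pmatrix} A_\plant - \gamma I_k & 0 \\ 0 & 0 \end{pmatrix}$, as claimed.

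I expect the only step beyond routine bookkeeping to be the convex-analytic input just used --- that a functional in the relative interior of $\mathcal{N}_{K}(\bar{A})$ exposes a face whose relative interior contains $\bar{A}$. One can cite this, or prove it by perturbing $M$ within $\mathrm{span}(\mathcal{N}_{K}(\bar{A}))$ and appealing to the polarity between the normal and tangent cones at $\bar{A}$ together with the compactness of $K$. A second point worth confirming is that every point of the orbit $\{U \bar{A} U' : U \in \mathcal{O}_n\}$ really is an extreme point of $K = \sh(\bar{A})$; this is immediate from the description of the Schur-Horn orbitope through majorization of eigenvalues, which is part of the basic structure developed earlier in the paper. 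Everything else reduces to tracking the support constraint and the defining property of the normal cone.
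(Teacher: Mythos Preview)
Your proof is correct and follows essentially the same dual-certificate route as the paper: decompose $A_\obs = (A_\obs - M) + M$ with $A_\obs - M \in \mathcal{L}^\perp$ and $M$ in the relative interior of the normal cone, then conclude unique optimality. The paper compresses your face-exposure argument for uniqueness into a one-line citation of standard convex analysis (Rockafellar), whereas you spell it out; the substance is the same.
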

\begin{proof}
From standard results in convex analysis  \cite{rockafellar2015convex}, we have that $\begin{pmatrix} A_\plant - \gamma I_k & 0 \\ 0 & 0 \end{pmatrix}$ is the unique optimal solution of (\ref{PrimalOptimization}) if $A_\obs$ can be decomposed as $A_\obs  \in K+ \mathrm{relint}\left(\mathcal{N}_{\sh([A_\plant - \gamma I_k]_{k \rightarrow n})}([A_\plant - \gamma I_k]_{k \rightarrow n})\right)$ for some matrix $K \in \Sym^n$ that satisfies:
\begin{equation*}
K_{i,j} = 0 ~ \mathrm{if~either}~ (A_\obs)_{i,j} = 1 ~\mathrm{or}~ i=j.
\end{equation*}
Letting $K = A_\obs-M$ we have the desired result.
\end{proof}

The assumption on the node labeling is made purely for the sake of notational convenience in our analysis (to avoid clutter in having to keep track of additional permutations), and our algorithmic methodology does not rely on such a labeling.  Based on this characterization of the optimality conditions, the success of the Schur-Horn relaxation relies on the existence of a suitable dual variable $M \in \mathbb{S}^n$ that satisfies two conditions.  The first of these conditions relates to the structure of the noise edges in $\obs$, while the second condition relates to the structure of the planted graph $\plant$ via the normal cone $\mathcal{N}_{\sh([A_\plant - \gamma I_k]_{k \rightarrow n})}([A_\plant - \gamma I_k]_{k \rightarrow n})$.  From the viewpoint of Lemma \ref{First Optimality Lemma}, favorable problem instances for the Schur-Horn relaxation are, informally speaking, those in which there are not too many noise edges in $\obs$ (implying a less restrictive first requirement on $M$) and in which the normal cone $\mathcal{N}_{\sh([A_\plant - \gamma I_k]_{k \rightarrow n})}([A_\plant - \gamma I_k]_{k \rightarrow n})$ is large (entailing a more flexible second condition for $M$).  The interplay between these two conditions forms the basis of our analysis and results presented in Section \ref{SectionRecovering}. In the remainder of the present section, we investigate spectral properties of planted graphs $\plant$ that result in a large normal cone $\mathcal{N}_{\sh([A_\plant - \gamma I_k]_{k \rightarrow n})}([A_\plant - \gamma I_k]_{k \rightarrow n})$.

The normal cones at the extreme points of the Schur-Horn orbitope are conveniently described based on the following notion (see Proposition \ref{Spect Comonotone Prop} in the sequel):
\begin{definition}
A pair of symmetric matrices $A,B \in \mathbb{S}^n$ is \emph{spectrally comonotone} if there exists an orthogonal matrix $U \in \R^{n \times n}$ such that $U' A U$ and $U' B U$ are both diagonal matrices with the diagonal entries sorted in nonincreasing order.
\end{definition}

The stipulation that two matrices be spectrally comonotone is a stronger condition than the requirement that the matrices be simultaneously diagonalizable, due to the additional restriction on the ordering of the diagonal entries upon conjugation by an orthogonal matrix.

\begin{example}
Consider the matrices $A = \begin{pmatrix} 3 & 0 & 0 \\ 0 & 1 & 0 \\ 0 & 0 & 1 \end{pmatrix}, B = \begin{pmatrix} 1 & 0 & 0 \\ 0 & 0.5 & 0.5 \\ 0 & 0.5 & 0.5 \end{pmatrix}, C = \begin{pmatrix} 1 & 0 & 0 \\ 0 & 1 & 1 \\ 0 & 1 & 1 \end{pmatrix}$.  The matrices $A$ and $B$ are spectrally comonotone, while $A$ and $C$ are only simultaneously diagonalizable and are not spectrally comonotone.
\end{example}

As Proposition \ref{First Optimality Lemma} states the optimality conditions of the Schur-Horn relaxation in terms of the \emph{relative interiors} of normal cones at extreme points of the Schur-Horn orbitope, we need the following ``strict'' analog of spectral comonotonicity:

\begin{definition}
A matrix $A\in \mathbb{S}^n$ is \emph{strictly spectrally comonotone} with a matrix $B\in\mathbb{S}^n$, if for every $P \in \mathbb{S}^n$ that is simultaneously diagonalizable with $B$, there exists $\epsilon>0$ such that $A + \epsilon \,P$ and $B$ are spectrally comonotone.
\end{definition}

Strict spectral comonotonicity is more restrictive than spectral comonotonicity.  Further, the definition of strict spectral comonotonocity is not a symmetric one, unlike that of spectral comonotonicity, i.e., even if $A \in \Sym^n$ is strictly spectrally comonotone with $B \in \Sym^n$, it may be that $B$ is not strictly spectrally comonotone with $A$.

\begin{example}
Consider the matrices $A = \begin{pmatrix} 3 & 0 & 0 \\ 0 & 2 & 0 \\ 0 & 0 & 1 \end{pmatrix}, B = \begin{pmatrix} 3 & 0 & 0 \\ 0 & 1 & 0 \\ 0 & 0 & 1 \end{pmatrix}$.  The matrix $A$ is strictly spectrally comonotone with the matrix $B$, but $B$ is not strictly spectrally comonotone with $A$.
\end{example}

The following result provides a characterization of normal cones at extreme points of the Schur-Horn orbitope in terms of spectrally comonotone matrices:

\begin{proposition} \label{Spect Comonotone Prop}
For any matrix $M \in \mathbb{S}^n$ and the associated Schur-Horn orbitope $\sh(M)$, the normal cone $\mathcal{N}_{\sh(M)}(W)$ and its relative interior at an extreme point $W$ of $\sh(M)$ are given by:
\begin{align*}
\mathcal{N}_{\sh(M)}(W) &= \{Q \in \mathbb{S}^n~|~ Q ~\mathrm{and}~ W~\mathrm{are~spectrally~comonotone} \}.\\
\mathrm{relint}\,\big(\mathcal{N}_{\sh(M)}(W)\big) &= \{Q \in \mathbb{S}^n~|~ Q ~\mathrm{is~strictly~spectrally~comonotone~with}~ W \}.
\end{align*}
\end{proposition}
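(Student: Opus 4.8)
The plan is to prove the two identities by different routes. The description of $\mathcal{N}_{\sh(M)}(W)$ will follow from a sharp version of von Neumann's trace inequality for symmetric matrices together with its equality case, while the description of $\mathrm{relint}\big(\mathcal{N}_{\sh(M)}(W)\big)$ will follow by identifying the affine hull of this normal cone with the commutant of $W$ and invoking a standard fact about relative interiors. Throughout it is convenient to reduce to the case $W = D$ diagonal. Since $\sh(M)$ is the convex hull of the compact orbit $\{UMU' : U \in \mathcal{O}_n\}$, each extreme point of $\sh(M)$ lies in that orbit and hence has the same eigenvalues as $M$ (see \cite{sanyal2011orbitopes}); writing $W = U_0 D U_0'$ with $D$ the diagonal matrix of eigenvalues of $M$ in nonincreasing order, the $\mathcal{O}_n$-invariance of $\sh(M)$ (immediate from its definition) gives $\mathcal{N}_{\sh(M)}(W) = U_0\,\mathcal{N}_{\sh(M)}(D)\,U_0'$ and transports spectral (strict) comonotonicity with $W$ to the same notion with $D$. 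So I would prove both identities for $W=D$ and conjugate back. Finally, note that $Q \in \mathcal{N}_{\sh(M)}(D)$ iff $D$ maximizes $X \mapsto \tr(QX)$ over $\sh(M)$, i.e. iff $\tr(QD) = \max_{U \in \mathcal{O}_n}\tr(Q\,UMU')$, because a linear functional on a convex hull is maximized on the generating orbit.

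For the first identity I would establish the following: for $A,B \in \Sym^n$ with eigenvalue vectors $\lambda^\downarrow(A),\lambda^\downarrow(B)$ in nonincreasing order, $\max_{U\in\mathcal{O}_n}\tr(A\,UBU') = \langle \lambda^\downarrow(A),\lambda^\downarrow(B)\rangle$, and $\tr(AB) = \langle \lambda^\downarrow(A),\lambda^\downarrow(B)\rangle$ if and only if $A$ and $B$ are spectrally comonotone. The inequality $\tr(AB) \le \langle \lambda^\downarrow(A),\lambda^\downarrow(B)\rangle$ follows by writing $\tr(AB) = \sum_{i,j}\lambda^\downarrow_i(A)\,\lambda^\downarrow_j(B)\,S_{ij}$ with $S_{ij}$ the doubly stochastic matrix of squared inner products of eigenvectors of $A$ and $B$, then applying Birkhoff's theorem and the rearrangement inequality. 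For the equality case, a first-order analysis of $V \mapsto \tr(A\,VBV')$ on the manifold $\mathcal{O}_n$ shows any maximizer $VBV'$ must commute with $A$ (the derivative along $t\mapsto e^{tX}V$ is $\tr\!\big(X\,[A,VBV']\big)$ for antisymmetric $X$, and $[A,VBV']$ is itself antisymmetric, hence must vanish); in particular $\tr(AB) = \langle\lambda^\downarrow(A),\lambda^\downarrow(B)\rangle$ forces $AB=BA$, and then $\langle\lambda^\downarrow(A),\lambda^\downarrow(B)\rangle$ is realized as the pairing of eigenvalues induced by a common eigenbasis, so the equality case of the discrete rearrangement inequality lets one reorder that eigenbasis to sort both $A$ and $B$ in nonincreasing order, i.e. $A$ and $B$ are spectrally comonotone; the converse is immediate from $\tr(U'AU\cdot U'BU)$. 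Applying this with $A=Q$, $B=M$ and using $\mathrm{spec}(D)=\mathrm{spec}(M)$ gives that $Q\in\mathcal{N}_{\sh(M)}(D)$ iff $\tr(QD) = \langle\lambda^\downarrow(Q),\lambda^\downarrow(D)\rangle$ iff $Q$ and $D$ are spectrally comonotone, which is the first identity.

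For the second identity, write $c_1 > \cdots > c_r$ for the distinct eigenvalues of $D$ with consecutive index blocks $\Omega_1,\dots,\Omega_r$. Unpacking the definition, $Q$ is spectrally comonotone with $D$ iff $Q$ is block-diagonal along $\Omega_1,\dots,\Omega_r$ (equivalently $QD=DQ$) and $\lambda_{\min}(Q_{\Omega_\ell,\Omega_\ell}) \ge \lambda_{\max}(Q_{\Omega_{\ell+1},\Omega_{\ell+1}})$ for $\ell = 1,\dots,r-1$. Let $\mathcal{L} = \{P\in\Sym^n : PD = DP\}$, a linear subspace that is exactly the set of matrices simultaneously diagonalizable with $D$. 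Then $\mathcal{N}_{\sh(M)}(D)\subseteq\mathcal{L}$, and since $D$ has blocks $c_\ell I$ with the $c_\ell$ strictly decreasing, every sufficiently small perturbation of $D$ within $\mathcal{L}$ still satisfies the strict block inequalities above; hence $D$ is interior to $\mathcal{N}_{\sh(M)}(D)$ relative to $\mathcal{L}$, so $\mathrm{aff}\big(\mathcal{N}_{\sh(M)}(D)\big)=\mathcal{L}$ and $\mathrm{relint}\big(\mathcal{N}_{\sh(M)}(D)\big)=\mathrm{int}_{\mathcal{L}}\big(\mathcal{N}_{\sh(M)}(D)\big)$. Finally I would invoke the elementary fact that for a closed convex set $\mathcal{C}$ whose affine hull is a subspace $\mathcal{L}$, a point $Q\in\mathcal{C}$ lies in $\mathrm{relint}(\mathcal{C})$ iff for every $P\in\mathcal{L}$ there is $\epsilon>0$ with $Q+\epsilon P\in\mathcal{C}$ (the nontrivial direction being the supporting hyperplane theorem). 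Applied to $\mathcal{C} = \mathcal{N}_{\sh(M)}(D) = \{Q : Q\text{ spectrally comonotone with }D\}$, and recalling that $\mathcal{L}$ is precisely the matrices simultaneously diagonalizable with $D$, this condition is verbatim the definition of $Q$ being strictly spectrally comonotone with $D$ (the case $P=0$ recovers $Q\in\mathcal{L}$). Conjugating back by $U_0$ completes the proof.

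The technical heart, and the step I expect to be the main obstacle, is the equality case of the sharp trace inequality, together with the bookkeeping of repeated eigenvalues that pervades both identities: spectral comonotonicity is sensitive to how equal eigenvalues are ordered within the common eigenbasis, so the rearrangement-equality argument and the block-inequality description of the normal cone must both be carried out carefully with ties in mind.
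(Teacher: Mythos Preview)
Your proof is correct and, for the first identity, follows the same route as the paper: both reduce to $W=M$ (or a diagonal $D$) and invoke the equality case of von Neumann's trace inequality, with the paper simply citing \cite{neumann1937some} while you sketch the argument via Birkhoff and the first-order commutator condition. For the second identity, the paper's proof is a single sentence asserting that it ``follows immediately from the definition of strict spectral comonotonicity''; this is essentially by design, since the definition was tailored to match the relative interior, but it leaves unverified the key fact that the affine hull of the normal cone is the commutant $\mathcal{L}=\{P:PD=DP\}$. Your argument fills this gap explicitly---showing $D$ is $\mathcal{L}$-interior to the cone, hence $\mathrm{aff}(\mathcal{N})=\mathcal{L}$, and then matching the line-segment characterization of $\mathrm{relint}$ to the quantifier in the definition---so your treatment of the second identity is more complete than the paper's while remaining in the same spirit.
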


\paragraph{Note} For any matrix $M \in \mathbb{S}^n$, the extreme points of $\sh(M)$ are the elements of the set $\{U M U' ~|~ U \in \mathcal{O}_n\}$, as each of the matrices $U M U'$ for $U \in \mathcal{O}_n$ has the same Frobenius norm.

\begin{proof}
Let $W = M$ without loss of generality.  We have that:
\begin{align*}
\mathcal{N}_{\sh(M)}(M) \,=&\, \{Y \in \Sym^n ~|~ \sup_{Z \in \sh(M)} ~ \tr(Y Z) \,\leq \, \tr(Y M)\} \\ \,=&\, \{Y \in \Sym^n ~|~ \sup_{Z = U M U' ~\mathrm{for}~ U \in \mathcal{O}_n} ~ \tr(Y Z) \,\leq \, \tr(Y M)\} \\ \,=&\, \{Y \in \Sym^n ~|~ \sup_{U \in \mathcal{O}_n} ~ \tr(U' Y U M) \,= \, \tr(Y M)\}.
\end{align*}
The last line follows from the inequality $\tr(Y M) \leq \sup_{U \in \mathcal{O}_n} ~ \tr(U' Y U M)$.  Considering the case of equality in the Von Neumann trace inequality \cite{neumann1937some}, we have that $\sup_{U \in \mathcal{O}_n} ~ \tr(U' Y U M) \,= \, \tr(Y M)$ if and only if $Y$ and $M$ are spectrally comonotone.  The claim about the relative interior of the normal cone follows immediately from the definition of strict spectral comonotonicity.
\end{proof}

If a matrix $M \in \mathbb{S}^n$ has few distinct eigenvalues, the normal cone at an extreme point $U M U'$ (for $U$ orthogonal) of $\sh(M)$ is larger as there are many more matrices that are spectrally comonotone with $U M  U'$.  Based on Proposition \ref{Spect Comonotone Prop}, this observation suggests that planted graphs $\plant$ with \emph{few distinct eigenvalues} have large normal cones $\mathcal{N}_{\sh([A_\plant - \gamma I_k]_{k \rightarrow n})}([A_\plant - \gamma I_k]_{k \rightarrow n})$, and such graphs are especially amenable to recovery in planted subgraph problems via the Schur-Horn relaxation. We make this insight more precise with our analysis in Section \ref{Subsection MainResult}.  Proposition \ref{Spect Comonotone Prop} also points to the utility of employing the parameter $\gamma$ in the Schur-Horn relaxation \eqref{PrimalOptimization}.  Specifically, multiplicities in the spectrum of the matrix $[A_\plant - \gamma I_k]_{k \rightarrow n} \in \mathbb{S}^n$ may be increased via suitable choices of $\gamma$, which in turn makes the normal cone $\mathcal{N}_{\sh([A_\plant - \gamma I_k]_{k \rightarrow n})}([A_\plant - \gamma I_k]_{k \rightarrow n})$ larger.  In particular, setting $\gamma$ equal to an eigenvalue of $A_\plant$ increases the multiplicity of zero as an eigenvalue of $[A_\plant - \gamma I_k]_{k \rightarrow n}$.  As detailed in Section \ref{SectionRecovering}, the success of the Schur-Horn relaxation relies on the existence of an eigenspace $\eig \subset \R^k$ of $A_\plant$ with small coherence parameter, and the appropriate choice of $\gamma$ is the eigenvalue $\lambda_\eig$ associated to $\eig$.  In our experiments in Section \ref{SectionNumerical}, we set $\gamma$ equal to the eigenvalue of $A_\plant$ with largest multiplicity, so that the multiplicity of zero as an eigenvalue of $[A_\plant - \gamma I_k]_{k \rightarrow n}$ is as large as possible.

To conclude, we record an observation on spectral comonotonocity that is useful in Section \ref{SectionRecovering}.  The claim is straightforward and therefore we omit the proof.
\begin{lemma} \label{Lemma strictly spectrally comonotone}
A pair of symmetric matrices $A,B \in \mathbb{S}^n$ is spectrally comonotone if and only if $A$ and $B$ are simultaneously diagonalizable and
\begin{align}
\lambda_{\mathrm{min}}(A|_{\eig_i}) \geq \lambda_{\mathrm{max}}(A|_{\eig_{i+1}}) \quad \forall i \in \{1,\,\dots,\,t-1\}, \label{spectineq}
\end{align}
where $\eig_i$ for $i \in \{1,\,\dots,\,t\} $ are eigenspaces of $B$ ordered such that the corresponding eigenvalues of $B$ are decreasing.  Further, $A$ is strictly spectrally comonotone with $B$ if and only if $A$ and $B$ are simultaneously diagonalizable and each of the inequalities \eqref{spectineq} holds strictly.
\end{lemma}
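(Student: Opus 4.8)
The plan is to reduce spectral comonotonicity to the combinatorial ordering condition \eqref{spectineq} via an explicit choice of diagonalizing basis, and then to deduce the strict version from first-order eigenvalue perturbation together with one carefully chosen ``bad'' perturbation.

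\textbf{The non-strict equivalence.} Suppose first that $A$ and $B$ are spectrally comonotone, witnessed by $U \in \mathcal{O}_n$ with $U'AU$ and $U'BU$ both diagonal with nonincreasing diagonal. In particular $A$ and $B$ commute, hence are simultaneously diagonalizable and each eigenspace $\eig_i$ of $B$ is $A$-invariant; moreover every column of $U$ is an eigenvector of both $A$ and $B$. Since $U'BU$ is diagonal and nonincreasing, equal eigenvalues of $B$ occupy consecutive blocks of columns, and the block of columns whose $B$-eigenvalue is the $i$-th largest distinct value forms an orthonormal basis of $\eig_i$; the diagonal entries of $U'AU$ in that block are therefore exactly the eigenvalues of $A|_{\eig_i}$, and global monotonicity of the diagonal of $U'AU$ yields $\lambda_{\min}(A|_{\eig_i}) \geq \lambda_{\max}(A|_{\eig_{i+1}})$, i.e.\ \eqref{spectineq}. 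Conversely, if $A$ and $B$ are simultaneously diagonalizable and \eqref{spectineq} holds, I would build $U$ blockwise: inside each $A$-invariant subspace $\eig_i$ choose an orthonormal eigenbasis of the symmetric operator $A|_{\eig_i}$ arranged by nonincreasing eigenvalue, and concatenate these bases in the order $\eig_1,\dots,\eig_t$. Then $U'BU$ is diagonal and nonincreasing since the $\eig_i$ are ordered by decreasing $B$-eigenvalue, and $U'AU$ is diagonal and nonincreasing since it is nonincreasing within each block by construction while the comparison across the junction of blocks $i$ and $i+1$ is precisely $\lambda_{\min}(A|_{\eig_i}) \geq \lambda_{\max}(A|_{\eig_{i+1}})$.

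\textbf{The strict equivalence.} For the forward implication, taking $P = 0$ in the definition shows $A$ and $B$ are themselves spectrally comonotone, so they are simultaneously diagonalizable and \eqref{spectineq} holds non-strictly; it remains to rule out equality. If $\lambda_{\min}(A|_{\eig_{i_0}}) = \lambda_{\max}(A|_{\eig_{i_0+1}})$ for some $i_0$, I would exhibit a single obstruction $P$, simultaneously diagonalizable with $B$, that no $\epsilon > 0$ can repair. Let $v \in \eig_{i_0}$ be a unit eigenvector of $A|_{\eig_{i_0}}$ attaining $\lambda_{\min}(A|_{\eig_{i_0}})$, let $w \in \eig_{i_0+1}$ be a unit eigenvector of $A|_{\eig_{i_0+1}}$ attaining $\lambda_{\max}(A|_{\eig_{i_0+1}})$, and set $P = -vv' + ww'$. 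Since $v$ and $w$ lie in distinct eigenspaces of $B$, one checks that $P$ commutes with $B$ and that $\eig_{i_0},\eig_{i_0+1}$ are $P$-invariant; consequently $v$ remains an eigenvector of $(A+\epsilon P)|_{\eig_{i_0}}$ with eigenvalue $\lambda_{\min}(A|_{\eig_{i_0}}) - \epsilon$ and $w$ remains an eigenvector of $(A+\epsilon P)|_{\eig_{i_0+1}}$ with eigenvalue $\lambda_{\max}(A|_{\eig_{i_0+1}}) + \epsilon$. Hence $\lambda_{\min}\big((A+\epsilon P)|_{\eig_{i_0}}\big) \leq \lambda_{\min}(A|_{\eig_{i_0}}) - \epsilon < \lambda_{\max}(A|_{\eig_{i_0+1}}) + \epsilon \leq \lambda_{\max}\big((A+\epsilon P)|_{\eig_{i_0+1}}\big)$ for every $\epsilon>0$, so by the non-strict equivalence $A + \epsilon P$ and $B$ are never spectrally comonotone, contradicting strict spectral comonotonicity. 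For the converse, assume $A$ and $B$ are simultaneously diagonalizable with every inequality in \eqref{spectineq} strict, and let $P$ be simultaneously diagonalizable with $B$ (equivalently, $P$ commutes with $B$). Then $A + \epsilon P$ commutes with $B$ for all $\epsilon$, and Weyl's inequality applied to the restrictions gives $\lambda_{\min}\big((A+\epsilon P)|_{\eig_i}\big) \geq \lambda_{\min}(A|_{\eig_i}) - \epsilon\norm{P}_2$ and $\lambda_{\max}\big((A+\epsilon P)|_{\eig_{i+1}}\big) \leq \lambda_{\max}(A|_{\eig_{i+1}}) + \epsilon\norm{P}_2$; choosing $0 < \epsilon < \min_i \frac{\lambda_{\min}(A|_{\eig_i}) - \lambda_{\max}(A|_{\eig_{i+1}})}{2(\norm{P}_2 + 1)}$ keeps \eqref{spectineq} valid for $A + \epsilon P$ relative to $B$, so by the non-strict equivalence $A + \epsilon P$ and $B$ are spectrally comonotone.

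\textbf{Expected main obstacle.} The basis-construction half is routine bookkeeping; the delicate points are all in the strict direction, namely ensuring the perturbed matrices remain simultaneously diagonalizable with $B$ (handled by insisting that $P$ commute with $B$, which is exactly ``simultaneously diagonalizable with $B$'' in the symmetric setting) and confirming that the explicit rank-two $P$ obstructs comonotonicity uniformly in $\epsilon$ rather than merely for small $\epsilon$.
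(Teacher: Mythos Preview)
Your argument is correct in both the non-strict and strict parts; the blockwise construction of $U$, the rank-two obstruction $P=-vv'+ww'$, and the Weyl bound all go through as you describe. The paper itself omits the proof of this lemma entirely (declaring the claim ``straightforward''), so there is no authorial argument to compare against; your write-up would serve as a complete justification.
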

Note that if $A$ and $B$ simultaneously diagonalizable, then any eigenspace $\eig$ of $B$ is an invariant subspace of $A$.  As a result, the restriction of $A$ to the eigenspaces of $B$ in \eqref{spectineq} is consistent with the notation described in Section \ref{Outline}.

\subsection{Graphs with Few Eigenvalues} \label{SubsectionGraphsFewEvals}

Building on the preceding section, we give examples of families of graphs consisting of few distinct eigenvalues.  Such graphs have received much attention due to their connections to topics in combinatorics and design theory such as pseudorandomness \cite{krivelevich2006pseudo} and association schemes \cite{bannai1984algebraic,godsil1993algebraic}.

\begin{figure}[hbt]
\centering
   \subcaptionbox{\label{fig:Triangular8}}{\includegraphics[scale=0.25]{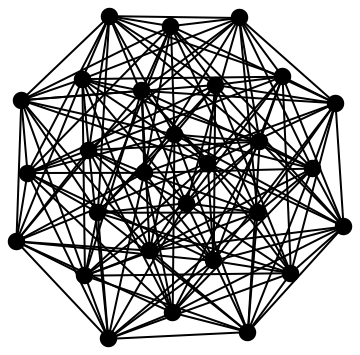}} \hspace{0.5in}%
   \subcaptionbox{\label{fig:Triangular9}}{\includegraphics[scale=0.25]{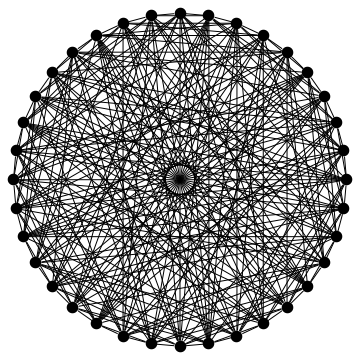}} \hspace{0.5in}%
   \subcaptionbox{\label{fig:PetersenGraph}}{\includegraphics[scale=0.25]{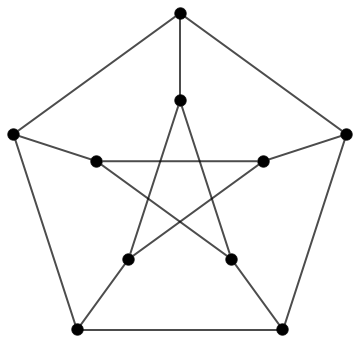}}
   \caption{From left to right: $8$-triangular graph, $9$-triangular graph, and Petersen graph.}\label{TriangularPetersenGraphs}
\end{figure}

\paragraph{Triangular graphs} The triangular graph $\mathcal{T}_m$ of order $m$ is the line graph of the complete graph on $m$ nodes.  The graph $\mathcal{T}_m$ has ${m \choose 2}$ nodes and it has the three distinct eigenvalues $2(m-2)$ (with multiplicity $1$), $m-4$ (with multiplicity $m-1$), and $-2$ (with multiplicity $\frac{m(m-3)}{2}$).  Figure \ref{TriangularPetersenGraphs} gives two examples.

\paragraph{Kneser graphs} A Kneser graph $\mathcal{K}(m,\ell)$ is a graph on ${m \choose \ell}$ nodes, each corresponding to an $\ell$-element subset of $m$ elements, and it consists of edges between those pairs of vertices for which the corresponding subsets are disjoint.  The graph $\mathcal{K}(m,1)$ is the complete graph on $m$ nodes and the graph $\mathcal{K}(5,2)$ is the Petersen graph (Figure \ref{TriangularPetersenGraphs}).  The Kneser graph $\mathcal{K}(m,\ell)$ has $\ell+1$ distinct eigenvalues in general.

\begin{figure}[hbt]
\centering
   \subcaptionbox{\label{fig:Paley5}}{\includegraphics[scale=0.25]{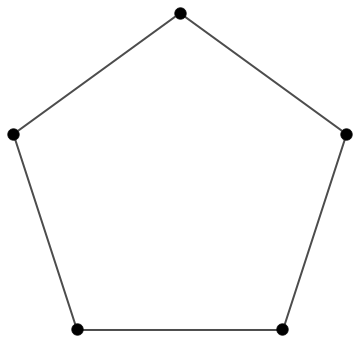}} \hspace{.5in}%
   \subcaptionbox{\label{fig:Paley13}}{\includegraphics[scale=0.25]{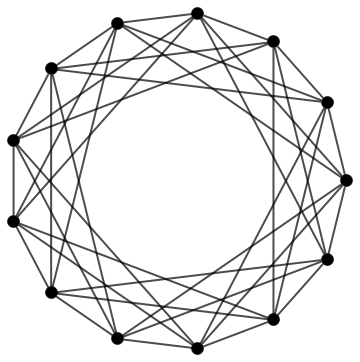}}\hspace{.5in}%
   \subcaptionbox{\label{fig:Paley17}}{\includegraphics[scale=0.25]{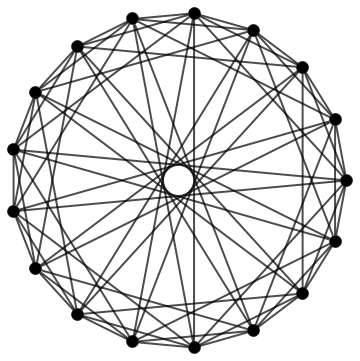}}%\hspace{0em}%
   \caption{From left to right: $5$-Paley graph, $13$-Paley graph, and $17$-Paley graph.}\label{PaleyGraph}
\end{figure}

\paragraph{Paley graphs} Let $q$ be a prime power such that $q \equiv 1 (\mathrm{mod}~ 4)$.  The Paley graph on $q$ nodes is an undirected graph formed by connecting pairs of nodes $i,j \subset \{0,\dots,q-1\}$ if the difference $i-j$ is a square in the finite field $\mathrm{GF}(q)$.  Note that $i-j$ is a square if and only if $j-i$ is a square as $-1$ is a square in $\mathrm{GF}(q)$.  Paley graphs have eigenvalues $\frac{1}{2}(q-1)$ (with multiplicity $1$), $\frac{1}{2}(-1+\sqrt{q})$ (with multiplicity $\frac{1}{2}(q-1)$), and $\frac{1}{2}(-1-\sqrt{q})$ (with multiplicity $\frac{1}{2}(q-1)$).  Paley graphs are also examples of pseudorandom graphs as they exhibit properties similar to random graphs (in the limit of large $q$) \cite{krivelevich2006pseudo}.  Figure \ref{PaleyGraph} shows the three smallest Paley graphs.

\begin{figure}[hbt]
\centering
   \subcaptionbox{\label{fig:GQ22}}{\includegraphics[scale=0.25]{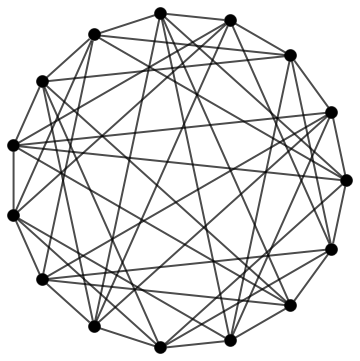}} \hspace{0.5in}%
   \subcaptionbox{\label{fig:GQ24}}{\includegraphics[scale=0.25]{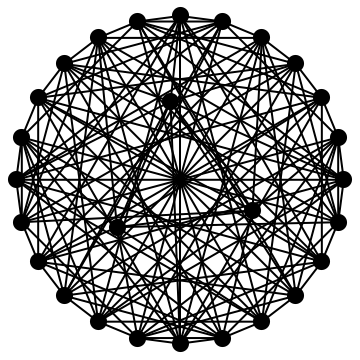}}%\hspace{.25in}%
\caption{Generalized quadrangle-$(2,2)$ graph (left) and generalized quadrangle-$(2,4)$ graph (right).}\label{GeneralizedQuadrangleGraph}
\end{figure}

\paragraph{Strongly regular graphs} These are regular graphs with the property that every pair of adjacent vertices has the same number $d_a$ of common neighbors and every pair of non-adjacent vertices has the same number $d_{na}$ of common neighbors, for some integers $d_a, d_{na}$ \cite{bose1963strongly}.  Strongly regular graphs that are connected have three distinct eigenvalues; conversely, connected and regular graphs with three distinct eigenvalues are necessarily strongly regular. The triangular graphs, Kneser graphs with parameter $\ell=2$ and the Paley graphs mentioned above are examples of strongly regular graphs. The Clebsch graph shown in Figure \ref{ClebschGraph} in the introduction is also a strongly regular graph with degree $5$ and eigenvalues $5$ (with multiplicity $1$), $-3$ (with multiplicity $5$), and $1$ (with multiplicity $10$).  The generalized quadrangle graphs shown in Figure \ref{GeneralizedQuadrangleGraph} are additional examples of strongly regular graphs. Strongly regular graphs form a significant topic in graph theory due to their many regularity properties \cite{brouwer1984strongly,cameron1978strongly,seidel1969strongly}.

\begin{figure}[hbt]
\centering
   \subcaptionbox{\label{fig:Hamming33}}{\includegraphics[scale=0.25]{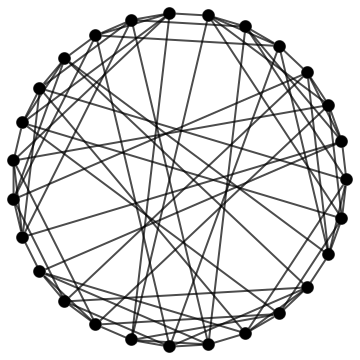}} \hspace{0.5in}%
   \subcaptionbox{\label{fig:Hypercube6}}{\includegraphics[scale=0.25]{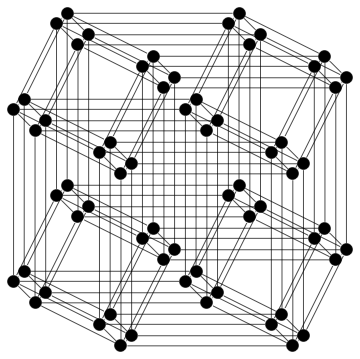}}%\hspace{.25in}%
\caption{Hamming-$(3,3)$ graph (left) and $6$-hypercube graph (right).}\label{HammingScheme}
\end{figure}

\paragraph{Other examples} Unlike regular graphs with three distinct eigenvalues, graphs with four (or more) eigenvalues do not appear to have a simple combinatorial characterization \cite{van1996graphs}.  Nonetheless, there are many constructions of such graphs in the literature \cite{haemers1996spreads,van1995regular, van1996graphs}, most notably those derived from distance-regular graphs \cite{brouwer2012distance} and from association schemes.  Graphs from association schemes of class-$d$ have at most $d+1$ eigenvalues, and therefore several examples of graphs with four eigenvalues come from $3$-class association schemes \cite{chang1994imprimitive,mathon19753}. The two graphs shown in Figure \ref{HammingScheme} are  obtained from the Hamming scheme.

\section{Recovering Subgraphs Planted in Erd\H{o}s-R\'{e}nyi Random Graphs} \label{SectionRecovering}

In this section we discuss our theoretical results on the performance of the Schur-Horn relaxation in recovering subgraphs planted inside Erd\H{o}s-R\'{e}nyi random graphs.  Formally, suppose without loss of generality as in the previous section that the nodes of $\obs$ and of $\plant$ are labeled so that the leading principal minor of $A_\obs$ of order $k$ is equal to $A_\plant$.  The Erd\H{o}s-R\'{e}nyi model for the planted subgraph problem specifies a distribution on the edges in the remainder of the graph $\obs$ via a probability parameter $p \in [0,1]$; for each $i, j \in \{1,\dots,n\}$ with $i < j$ and $k < j$, the graph $\obs$ contains an edge between nodes $i$ and $j$ with probability $p$ (independent of the other edges):
\begin{equation*}
(A_\obs)_{i,j} = (A_\obs)_{j,i} = \begin{cases} 1, ~ \mathrm{with~probability}~ p, \\ 0, ~ \mathrm{with~probability}~ 1-p. \end{cases}
\end{equation*}
We begin with a sufficient condition for the optimality condition described in Lemma \ref{First Optimality Lemma}, which suggests a natural approach for constructing suitable dual variables for certifying optimality.  These sufficient conditions point to the importance of the existence of an eigenspace of $A_\plant$ with certain properties to the success of the Schur-Horn relaxation; these properties are discussed in Section \ref{Subsection CentralNotions}. In Section \ref{Subsection MainResult} we state and prove the main theorem (Theorem \ref{Main Theorem}) of this paper, with Section \ref{Subsection Simplifications} giving specializations of this result (e.g., to the planted clique problem).

\subsection{A Simpler Sufficient Condition for Optimality}
The following proposition provides a simpler set of conditions than those in Lemma \ref{First Optimality Lemma} on dual variables that certify the success of the Schur-Horn relaxation.  This result continues to be deterministic in nature, and the probabilistic aspects of our analysis -- due to the Erd\H{o}s-R\'{e}nyi model -- appear in the sequel.

\begin{proposition} \label{Optimality Conditions}
Consider a planted subgraph problem instance in which the nodes of $\obs$ and $\plant$ are labeled so that the leading principal minor of $A_\obs$ of order $k$ is equal to $A_\plant$. Suppose there exists an eigenspace $\eig\subset \R^k$ of $A_\plant$ with eigenvalue $\lambda_\eig$, and suppose there exists a matrix $M = \begin{pmatrix} M_{11} & M_{12} \\  M_{12}' & M_{22} \end{pmatrix} \in \mathbb{S}^n$ with submatrices $M_{11} \in \mathbb{S}^k, M_{12} \in \R^{k \times (n-k)}, M_{22} \in \mathbb{S}^{n-k}$ such that the the following conditions are satisfied:
\begin{enumerate}
\item[$(i)$] $M_{i,j} = (\Ag)_{i,j}$, if $(\Ag)_{i,j}= 1$ or if $i=j$,\label{Optimality Condition E constraint}

\item[$(ii)$] The submatrix $M_{11} \in \mathbb{S}^k$ is strictly spectrally comonotone with $A_\plant$,	\label{Optimality Condition M11Comonotone}

\item[$(iii)$] $\lambda_{\mathrm{max}}(M_{11}|_{\eig}) \geq \lambda_\eig$ and $\lambda_{\mathrm{min}}(M_{11}|_{\eig}) \leq \lambda_\eig$, \label{Optimality Condition M11 restricted}
\item[$(iv)$] Each column of the submatrix $M_{12} \in \R^{k \times (n-k)}$ lies in the subspace $\eig$,\label{Optimality Condition M12Columns}
\item[$(v)$] $\egap(M_{11},\eig)>\|M_{12}\|_2 + \|M_{22}\|_2 + |\lambda_\eig|$. \label{Optimality Condition Eigengap}
\end{enumerate}
Then the Schur-Horn relaxation (\ref{PrimalOptimization}) with parameter $\gamma = \lambda_\eig$ succeeds at identifying the planted subgraph $\plant$ inside the larger graph $\obs$.
\end{proposition}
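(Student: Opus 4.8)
The plan is to reduce everything to Lemma~\ref{First Optimality Lemma} by showing that the matrix $M$ supplied by the hypotheses is \emph{itself} an admissible dual certificate. Set $\gamma = \lambda_\eig$ and $B := [A_\plant - \lambda_\eig I_k]_{k \to n}$. Condition~$(i)$ is literally the first requirement of Lemma~\ref{First Optimality Lemma}, so the entire task reduces to showing $M \in \mathrm{relint}\big(\mathcal{N}_{\sh(B)}(B)\big)$. Since $B$ is an extreme point of $\sh(B)$ (by the Note following Proposition~\ref{Spect Comonotone Prop}), that proposition identifies this relative interior with the set of matrices strictly spectrally comonotone with $B$, and Lemma~\ref{Lemma strictly spectrally comonotone} further decomposes this into: (a) $M$ and $B$ are simultaneously diagonalizable, and (b) the interlacing inequalities \eqref{spectineq} for $M$ relative to the eigenspaces of $B$ all hold \emph{strictly}.

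For step (a) I would first describe the eigenspaces of $B$. Writing $\R^n = \R^k \oplus \R^{n-k}$, the choice $\gamma = \lambda_\eig$ makes $Z := \eig \oplus \R^{n-k}$ the zero eigenspace of $B$, while the other eigenspaces of $B$ are exactly the eigenspaces $\eig_\lambda \subset \R^k$ of $A_\plant$ with $\lambda \neq \lambda_\eig$, now carrying eigenvalue $\lambda - \lambda_\eig$. Using $(ii)$ — which makes every eigenspace of $A_\plant$ invariant under $M_{11}$ — together with $(iv)$ — which forces $M_{12}' v = 0$ for $v \in \eig_\lambda$ (as $\eig_\lambda \perp \eig \supseteq \mathrm{col}(M_{12})$) and $M_{12} w \in \eig$ for every $w$ — one checks directly that each $\eig_\lambda$ and $Z$ is $M$-invariant. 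As $M$ is symmetric it then admits an orthonormal eigenbasis inside each of these $B$-eigenspaces, giving a common eigenbasis of $M$ and $B$.

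For step (b) I would order the eigenspaces of $B$ by decreasing eigenvalue as $\eig_{\lambda_1}, \dots, \eig_{\lambda_{j-1}}, Z, \eig_{\lambda_{j+1}}, \dots, \eig_{\lambda_t}$, where $\lambda_1 > \dots > \lambda_t$ are the eigenvalues of $A_\plant$ and $\lambda_j = \lambda_\eig$. Since $M$ acts on each $\eig_{\lambda_i}$ with $i \neq j$ exactly as $M_{11}$ does, every inequality in \eqref{spectineq} not involving $Z$ is precisely an instance of the strict comonotonicity of $M_{11}$ with $A_\plant$ from $(ii)$. Only two inequalities involve $Z$, namely $\lambda_{\min}(M_{11}|_{\eig_{\lambda_{j-1}}}) > \lambda_{\max}(M|_Z)$ and $\lambda_{\min}(M|_Z) > \lambda_{\max}(M_{11}|_{\eig_{\lambda_{j+1}}})$ (with the obvious simplification when $\lambda_\eig$ is the largest or smallest eigenvalue of $A_\plant$, so $Z$ is an extreme eigenspace of $B$ and one of these is vacuous). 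Here I would use the block form of $M|_Z$ — diagonal blocks $M_{11}|_\eig$ and $M_{22}$, off-diagonal block of spectral norm $\|M_{12}\|_2$ — and Weyl's inequality to get $\lambda_{\max}(M|_Z) \le \max\{\lambda_{\max}(M_{11}|_\eig),\, \|M_{22}\|_2\} + \|M_{12}\|_2$ and symmetrically for $\lambda_{\min}(M|_Z)$. Then $(ii)$ plus the eigengap hypothesis $(v)$ separate the eigenvalues of $M_{11}|_{\eig_{\lambda_{j\pm 1}}}$ from those of $M_{11}|_\eig$ by more than $\|M_{12}\|_2 + \|M_{22}\|_2 + |\lambda_\eig|$, while $(iii)$ supplies the sign bookkeeping ($\lambda_{\max}(M_{11}|_\eig) \ge \lambda_\eig \ge -|\lambda_\eig|$ and $\lambda_{\min}(M_{11}|_\eig) \le \lambda_\eig \le |\lambda_\eig|$) needed to absorb the $|\lambda_\eig|$ term into these bounds. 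Combining yields both strict inequalities, completing (b), and hence the proof via Lemma~\ref{First Optimality Lemma}.

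I expect the main obstacle to be step (b): bounding the spectrum of the coupled block $M|_Z$ and tracking exactly how the eigengap in $(v)$, the spectral-norm bounds, and the one-sided estimates in $(iii)$ interact — in particular the case $\|M_{22}\|_2 > \lambda_{\max}(M_{11}|_\eig)$, where $(iii)$ is genuinely needed, along with the degenerate cases $j = 1$ or $j = t$. Step (a) should be routine invariant-subspace bookkeeping.
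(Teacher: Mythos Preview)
Your proposal is correct and follows essentially the same route as the paper: reduce to Lemma~\ref{First Optimality Lemma}, invoke Proposition~\ref{Spect Comonotone Prop} and Lemma~\ref{Lemma strictly spectrally comonotone}, identify the eigenspaces of $B$ as $\eig_\lambda \times \{0\}$ and $Z = \eig \times \R^{n-k}$, and verify the strict interlacing using $(ii)$ for the non-$Z$ gaps and a block spectral bound plus $(iii)$ and $(v)$ for the two gaps adjacent to $Z$. The only cosmetic differences are that the paper establishes simultaneous diagonalizability by checking that $M$ and $B$ commute (rather than directly verifying invariance) and bounds $\lambda_{\max}(M|_Z)$ via the decomposition $\mathrm{diag}(M_{11}|_\eig,0) + \begin{pmatrix}0 & M_{12}\\ M_{12}' & M_{22}\end{pmatrix}$, yielding $\max\{\lambda_{\max}(M_{11}|_\eig),0\} + \|M_{12}\|_2 + \|M_{22}\|_2$ instead of your Weyl-type bound; both lead to the same case analysis with $(iii)$.
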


\begin{proof}
We establish this result by showing that the given matrix $M \in \Sym^n$ satisfies the requirements of Lemma $\ref{First Optimality Lemma}$.  The first condition of Lemma \ref{First Optimality Lemma} is identical to that of this proposition, and therefore it is satisfied.  We prove next that the remaining conditions of this proposition ensure that the second requirement of Lemma \ref{First Optimality Lemma} is also satisfied, i.e., $M \in \mathrm{relint}\left(\mathcal{N}_{\sh([A_\plant - \lambda_\eig I_k]_{k \rightarrow n})}([A_\plant - \lambda_\eig I_k]_{k \rightarrow n})\right)$.  Based on Proposition \ref{Spect Comonotone Prop}, this entails showing that $M$ is strictly spectrally comonotone with $[A_\plant - \lambda_\eig I_k]_{k \rightarrow n}$.  Our strategy is to employ Lemma \ref{Lemma strictly spectrally comonotone}.

Let $\eig_i \subset \R^k, ~ i=1,\dots,t$ be the eigenspaces of $A_\plant$ ordered such that the corresponding eigenvalues $\lambda_{\eig_i}$ are strictly decreasing, and suppose $\eig_j = \eig, ~ \lambda_{\eig_j} = \lambda_\eig$ for some $j \in \{1,\dots,t\}$.  As $0$ is an eigenvalue of $A_\plant - \lambda_\eig I_k$, one can check that the eigenspaces of $[A_\plant - \lambda_\eig I_k]_{k \rightarrow n}$ are $\tilde{\eig}_i = \eig_i \times \{0\} \subset \R^k \times \R^{n-k}, ~ i=1,\dots,t, ~ i \neq j$ (with corresponding eigenvalues $\lambda_{\eig_i} - \lambda_\eig$) and $\tilde{\eig}_j = \eig \times \R^{n-k} \subset \R^k \times \R^{n-k}$ (with eigenvalue $0$).  We now need to show that $M$ and $[A_\plant - \lambda_\eig I_k]_{k \rightarrow n}$ are simultaneously diagonalizable, and that $\lambda_{\mathrm{min}}(M|_{\tilde{\eig}_i}) > \lambda_{\mathrm{max}}(M|_{\tilde{\eig}_{i+1}})$ for $i \in \{1,\,\dots,\,t-1\}$.

First, as $\eig$ is an eigenspace of $A_\plant - \lambda_\eig I_k$ with eigenvalue $0$ and as every column of $M_{12}$ belongs to $\eig$, one can check that $(A_\plant - \lambda_\eig I_k) \cdot M_{12} = 0 \in \R^{k \times (n-k)}$.  Further, from Lemma \ref{Lemma strictly spectrally comonotone} we note that $M_{11}$ and $A_\plant - \lambda_\eig I_k$ are simultaneously diagonalizable because $M_{11}$ is strictly spectrally comonotone with $A_\plant$ (and hence with  $A_\plant - \lambda_\eig I_k$).  From these two observations one can check that $M$ and $[A_\plant - \lambda_\eig I_k]_{k \rightarrow n}$ commute with each other, and therefore are simultaneously diagonalizable.

As $M$ and $[A_\plant - \lambda_\eig I_k]_{k \rightarrow n}$ are simultaneously diagonalizable, we have that the eigenspaces $\tilde{\eig}_i, ~ i=1,\dots,t$ of $[A_\plant - \lambda_\eig I_k]_{k \rightarrow n}$ are invariant subspaces of $M$.  Similarly, as $M_{11}$ is strictly spectrally comonotone with $A_\plant$, the eigenspaces $\eig_i$ are invariant subspaces of $M_{11}$.  Based on the structure of these eigenspaces as described above, one can check that the eigenvalues of $M|_{\tilde{\eig}_i}$ are equal to those of $M_{11}|_{\eig_i}$ for each $i = 1,\dots,t, ~ i \neq j$.  Hence, $\lambda_{\min}(M|_{\tilde{\eig}_i}) > \lambda_{\max}(M|_{\tilde{\eig}_{i+1}})$ for $i > j$ and for $i < j-1$.

All that remains to be verified is that $\lambda_{\min}(M|_{\tilde{\eig}_j}) > \lambda_{\max}(M|_{\tilde{\eig}_{j+1}})$ and that $\lambda_{\min}(M|_{\tilde{\eig}_{j-1}}) > \lambda_{\max}(M|_{\tilde{\eig}_{j}})$. As each column of $M_{12}$ belongs to $\eig$ and as $\tilde{\eig}_j = \eig \times \R^{n-k} \subset \R^k \times \R^{n-k}$, we have for $x \in \eig, y \in \R^{n-k}$ that:
\begin{equation}
M|_{\tilde{\eig}_j} \begin{pmatrix}x \\ y \end{pmatrix} = \left[\begin{pmatrix} M_{11}|_{\eig} & 0 \\ 0  & 0 \end{pmatrix} + \begin{pmatrix} 0 & M_{12} \\ M_{12}'  & M_{22} \end{pmatrix} \right] \begin{pmatrix}x \\ y \end{pmatrix} = \begin{pmatrix} M_{11}|_{\eig} x + M_{12} y \\ M_{12}' x + M_{22} y \end{pmatrix} \in \tilde{\eig}_j. \label{structurewithineig}
\end{equation}
Consequently, recalling that $\eig_j = \eig$ we have:
\begin{equation*}
\begin{aligned}
\lambda_{\max}(M|_{\tilde{\eig}_j}) &\leq \max\{\lambda_{\max}(M_{11}|_{\eig}),0\} + \|M_{12}\|_2 + \|M_{22}\|_2 \\ &< \max\{\lambda_{\max}(M_{11}|_{\eig}),0\} - |\lambda_\eig| + \egap(M_{11},\eig) \\ &\leq \max\{\lambda_{\max}(M_{11}|_{\eig}),0\} - |\lambda_\eig| + \lambda_{\min}(M_{11}|_{\eig_{j-1}}) - \lambda_{\max}(M_{11}|_{\eig}) \\ &= \max\{0,-\lambda_{\max}(M_{11}|_{\eig})\} - |\lambda_\eig| + \lambda_{\min}(M_{11}|_{\eig_{j-1}}) \\ &\leq \max\{0,-\lambda_{\eig}\} - |\lambda_\eig| + \lambda_{\min}(M_{11}|_{\eig_{j-1}}) \\ &\leq \lambda_{\min}(M_{11}|_{\eig_{j-1}}) \\ &= \lambda_{\min}(M|_{\tilde{\eig}_{j-1}}).
\end{aligned}
\end{equation*}
The first inequality follows from \eqref{structurewithineig}, the second inequality from condition $(v)$, the third inequality from the definition of $\egap$ (see Section \ref{Outline}) as $\eig_j = \eig$, the fourth inequality from condition $(iii)$, and the second equality from the fact that the eigenvalues of $M|_{\tilde{\eig}_i}$ are equal to those of $M_{11}|_{\eig_i}$ for each $i = 1,\dots,t, ~ i \neq j$.  Similarly, one can check that $\lambda_{\min}(M|_{\tilde{\eig}_j}) > \lambda_{\max}(M|_{\tilde{\eig}_{j+1}})$.  This concludes the proof.
\end{proof}

This result provides a concrete approach for constructing dual variables to certify the optimality of the Schur-Horn relaxation (\ref{PrimalOptimization}) at the desired solution.  In the remainder of this section, we give conditions on the eigenstructure of the planted graph $\plant$, the probability $p$ of the Erd\H{o}s-R\'{e}nyi model, and the size $n$ of the larger graph $\obs$ under which the Schur-Horn relaxation (\ref{PrimalOptimization}) succeeds with high probability.

\subsection{Invariants of Graph Eigenspaces} \label{Subsection CentralNotions}

In this section, we investigate properties of eigenspaces of graphs which ensure that the conditions of Proposition \ref{Optimality Conditions} can be satisfied.  For notational clarity in the discussion in this section, we let $\Omega_j \subset \{1,\dots,k\}$ for $j=1,\dots,n-k$ denote the locations of the entries equal to one in the submatrix $(A_\obs)_{i,j+k}, ~ i=1,\dots,k; j=1,\dots,n-k$, i.e., $(A_\obs)_{i,j+k} = 1 \Leftrightarrow i \in \Omega_j$.

A requirement of Proposition \ref{Optimality Conditions} is the existence of a suitable eigenspace $\eig \subset \R^k$ of $A_\plant$ such that one can obtain a matrix $M_{12} \in \R^{k \times (n-k)}$ (a submatrix of a larger dual certificate) that satisfies three conditions:  $(i)$ Every column of $M_{12}$ lies in $\eig$, $(ii)$ For each $i=1,\dots,k$ and $j=1,\dots,n-k$ we have that $(M_{12})_{i,j} = 1$ if $(A_\obs)_{i,j+k} = 1$, and $(iii)$ The operator norm $\|M_{12}\|_2$ is as small as possible.

We begin by analyzing the first two conditions and the restrictions they impose on $\eig$.  Consider the $j$'th column of $M_{12}$ for a fixed $j \in \{1,\dots,n-k\}$ as an illustration.  Then conditions $(i)$ and $(ii)$ are simultaneously satisfied if the coordinate subspace of vectors in $\R^k$ with support on the indices in $\Omega_j$ has a \emph{transverse intersection} with $\eig^\perp$.  More generally, a natural sufficient condition for the first two requirements on $M_{12}$ to be satisfied (for every column) is for $\eig^\perp$ to have a transverse intersection with the coordinate subspaces specified by each of the subsets $\Omega_j$ for $j=1,\dots,k$.  This observation leads to the following invariant that characterizes the transversality of a subspace with all coordinate subspaces of a certain dimension:

\begin{definition} \cite{kruskal1977three}
The \emph{Kruskal rank} of a subspace $\mathcal{S} \subseteq \R^k$, denoted $\krank{\mathcal{S}}$, is the largest $m \in \mathbb{Z}$ such that for any $\Omega \subseteq \{1,\dots,k\}$ with $|\Omega| = m$ we have:
\begin{equation*}
\mathcal{S}^\perp \cap \{v \in \R^k ~|~ v_i = 0 ~\mathrm{if}~ i \notin \Omega\} = \{0\}.
\end{equation*}
\end{definition}
In other words, the Kruskal rank of a subspace $\mathcal{S} \subset \R^k$ is one less than the size of the support of the sparsest nonzero vector in $\R^k$ that is orthogonal to $\mathcal{S}$.  The Kruskal rank of a matrix -- the largest $m$ such that all subsets of $m$ columns of the matrix are linearly independent -- was first introduced in \cite{kruskal1977three} in the context of tensor decompositions.  This version in terms of matrices is equivalent to our definition in terms of subspaces.  One can check that all principal minors of $\proj_{\mathcal{S}}$ of size upto $\krank{\mathcal{S}}$ are non-singular.

Recall that the entries $(A_\obs)_{i,j+k}$ for $i=1,\dots,k$ and $k=1,\dots,n-k$ correspond to edges (or lack thereof) between nodes in $\obs$ outside the induced subgraph corresponding to $\plant$ and those of $\plant$.  Therefore, if we employ the Schur-Horn relaxation with parameter $\gamma = \lambda_\eig$ (the eigenvalue associated to $\eig$), then the Kruskal rank of $\eig$ provides a bound on the number of noise edges that can be tolerated between these two sets of nodes.  As such $\krank{\eig}$ plays a central role in our main result (see Theorem \ref{Main Theorem}) in providing an upper bound on the probability of a noise edge in $\obs$ under the Erd\H{o}s-R\'{e}nyi model.

Returning to the three conditions on $M_{12}$ stated at the beginning of this section, if an eigenspace $\eig$ of $A_\plant$ has large Kruskal rank and if the size of each $\Omega_j, ~ j=1,\dots,n-k$ is smaller than $\krank{\eig}$, then there is an affine space (of dimension potentially larger than zero) of matrices in $\R^{k \times (n-k)}$ that satisfy the first two requirements on $M_{12}$.  The third condition on $M_{12}$ requires that we find the element of this affine space with the smallest spectral norm:
\begin{equation*}
\begin{aligned}
\hat{M}_{12}^{\mathrm{spectral}} = \argmin_{X \in \R^{k \times (n-k)}} ~ & \|X\|_2 \\\mathrm{s.t.} ~ & X_{i,j} = 1 ~\mathrm{if}~ i \in \Omega_j ~\mathrm{for}~ j=1,\dots,n-k \\ & \mathrm{col}(X) \subseteq \eig.
\end{aligned}
\end{equation*}
As long as $|\Omega_j| \leq \krank{\eig}$ for each $j=1,\dots,n-k$, this problem is feasible.  However, analytically characterizing the optimal value and solution of this problem is challenging, especially in the context of problem instances that arise from the Erd\H{o}s-R\'{e}nyi model, as the subsets $\Omega_j, ~ j=1,\dots,n-k,$ are random.  As a result, a common approach is to replace the objective in the above problem with the Frobenius norm:
\begin{equation} \label{dual12}
\begin{aligned}
\hat{M}_{12}^{\mathrm{frobenius}} = \argmin_{X \in \R^{k \times (n-k)}} ~ & \|X\|_F \\\mathrm{s.t.} ~ & X_{i,j} = 1 ~\mathrm{if}~ i \in \Omega_j ~\mathrm{for}~ j=1,\dots,n-k \\ & \mathrm{col}(X) \subseteq \eig.
\end{aligned}
\end{equation}
One of the virtues of this latter formulation in comparison to the earlier one is that the spectral norm of the optimal solution $\|\hat{M}_{12}^{\mathrm{frobenius}}\|_2$ is more tractable to bound, primarily since the optimization problem \eqref{dual12} decomposes into $n-k$ separable problems, one for each column of the decision variable $X$.  In particular, for any subspace $\mathcal{S} \subseteq \R^k$ and any $\Omega \subset \{1,\dots,k\}$ with $|\Omega| \leq \krank{\mathcal{S}}$, consider the following minimum Euclidean-norm completion:
\begin{equation} \label{completion}
\begin{aligned}
q_\Omega(\mathcal{S}) &\triangleq \argmin_{q \in \R^k} ~ \|q\| ~~~ \mathrm{s.t.} ~ q \in \mathcal{S} ~\mathrm{and}~ q_i = 1 ~\mathrm{for}~ i \in \Omega \\ &= \proj_{\mathcal{S}} {I_{\Omega}}' (({\proj_{\mathcal{S}}})_{{\Omega},{\Omega}})^{-1}{1_{|{\Omega}|}}.
\end{aligned}
\end{equation}
With this notation, the $j$'th column of $\hat{M}_{12}^{\mathrm{frobenius}}$ is given by $q_{\Omega_j}(\eig)$.  Further, under the Erd\H{o}s-R\'{e}nyi model, the entries $(A_\obs)_{i,j+k}, ~ i=1,\dots,k;j=1,\dots,n-k$ are independent and identically distributed Bernoulli random variables.  In such a family of problem instances, the columns of $\hat{M}_{12}^{\mathrm{frobenius}}$, i.e., $q_{\Omega_j}(\eig) \in \R^k, ~j=1,\dots,k$, are independently and identically distributed random vectors.  These observations in conjunction with the following tail bound on the spectral norm of a random matrix suggest a natural invariant of $\eig$ that leads to bounds on $\|\hat{M}_{12}^{\mathrm{frobenius}}\|_2$:

\begin{lemma} \cite{vershynin2012introduction} \label{Vershynin Theorem 5.44}
Let ${A}$ be a $d \times N$ matrix ($d<N$) with columns $A_i$ and let ${\Sigma} = \mathbb{E} [{A_i} {A_i}^T]$ denote the correlation matrix of the $A_i$'s.  Further, suppose there exists $m \in \R$ such that $\|A_i\| \leq \sqrt{m}$ almost surely for all $i$.  Then $\forall  x \geq (N \|\Sigma\|_2)^{1/2}$ we have that
\begin{equation}
		\mathbb{P}(\|A\|_2 \geq x) \leq 2d \exp{\Big(-\frac{3 ({x^2} - N \|\Sigma\|_2)^2}{4m({x^2} + 2 N \|\Sigma\|_2)}}\Big).
	\end{equation}	
\end{lemma}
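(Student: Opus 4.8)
The statement is a standard non-commutative concentration inequality, and the plan is to derive it via the matrix Laplace transform (matrix Chernoff/Bernstein) method applied to the positive semidefinite random matrix $S = AA^T = \sum_{i=1}^{N} A_i A_i^T$. The first step is a reduction: $\norm{A}_2^2 = \lambda_{\max}(S)$, the matrix $S$ is a sum of independent, rank-one, positive semidefinite random matrices $A_i A_i^T$ with $\mathbb{E}[S] = N\Sigma$ (so that $\lambda_{\max}(\mathbb{E}[S]) = N\norm{\Sigma}_2$ since $\Sigma$ is positive semidefinite), and the almost-sure bound $\norm{A_i} \le \sqrt{m}$ gives $0 \preceq A_i A_i^T \preceq m\, I_d$. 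Thus the event $\{\norm{A}_2 \ge x\}$ is precisely $\{\lambda_{\max}(S) \ge x^2\}$, an upper-tail bound for $\lambda_{\max}$ of a sum of bounded independent positive semidefinite matrices; the hypothesis $x \ge (N\norm{\Sigma}_2)^{1/2}$ puts us in the regime $x^2 \ge \lambda_{\max}(\mathbb{E}[S])$, which is exactly where such a tail estimate is meaningful and where the Chernoff optimization admits a nonnegative parameter.

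For the main estimate, fix $\theta>0$ and apply Markov's inequality to $e^{\theta\lambda_{\max}(S)}$ together with $\lambda_{\max}(e^{\theta S}) \le \tr\, e^{\theta S}$, obtaining $\mathbb{P}(\lambda_{\max}(S) \ge t) \le e^{-\theta t}\,\mathbb{E}[\tr\, e^{\theta S}]$ with $t = x^2$. The expected trace-exponential is controlled by Lieb's concavity theorem (in the form used by Tropp): $\mathbb{E}[\tr\, e^{\theta S}] \le \tr\exp\big(\sum_{i} \log\mathbb{E}[e^{\theta A_i A_i^T}]\big)$. Each matrix moment generating factor is bounded using convexity of $z \mapsto e^{\theta z}$ on $[0,m]$, so that $e^{\theta z} \le 1 + \frac{e^{\theta m}-1}{m}z$ there; transferring this scalar inequality to the matrix $A_iA_i^T$ (whose spectrum lies in $[0,m]$) yields $e^{\theta A_iA_i^T} \preceq I_d + \frac{e^{\theta m}-1}{m}A_iA_i^T$, and taking expectations followed by $\log(I_d + B) \preceq B$ gives $\log\mathbb{E}[e^{\theta A_iA_i^T}] \preceq \frac{e^{\theta m}-1}{m}\mathbb{E}[A_iA_i^T]$. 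Summing over $i$, using $\sum_i \mathbb{E}[A_iA_i^T] = N\Sigma$ and $\tr\exp(B) \le d\, e^{\lambda_{\max}(B)}$, one arrives at
$$\mathbb{P}(\lambda_{\max}(S) \ge t) \;\le\; d\,\exp\!\Big(-\theta t + \tfrac{e^{\theta m}-1}{m}\,N\norm{\Sigma}_2\Big) \qquad\text{for every } \theta>0.$$

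It then remains to optimize over $\theta$ and reshape the exponent. Writing $\mu = N\norm{\Sigma}_2$ and $r = t/\mu \ge 1$, the unconstrained minimizer is $\theta^\star = \tfrac1m\log r$, which is admissible precisely because $t \ge \mu$, and it yields the ``Chernoff form'' $d\,(e^{r-1}/r^{r})^{\mu/m}$. To reach the stated ``Bernstein form'', I would instead use the elementary estimate $e^{u}-1 \le u + \tfrac{u^2/2}{1-u/3}$ for $0 \le u < 3$ (from $k! \ge 2\cdot 3^{k-2}$) with $u = \theta m$, so the exponent is at most $-\theta(t-\mu) + \tfrac{\mu m\,\theta^2/2}{1-\theta m/3}$; choosing $\theta$ of order $(t-\mu)/\big(m(t+2\mu)\big)$ and simplifying turns this into an exponent of order $-(t-\mu)^2/\big(m(t+2\mu)\big)$, which upon tracking constants becomes the claimed $-\tfrac{3(x^2-\mu)^2}{4m(x^2+2\mu)}$. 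The proof is essentially routine once the matrix Laplace transform machinery is in place; the only point requiring genuine care is the constant bookkeeping at the end, namely that the exact factor $\tfrac34$ (rather than the $\tfrac32$ one gets from a tight matrix-Bernstein optimization) and the prefactor $2d$ (rather than $d$) reflect deliberate slack — the kind introduced to make the constraint $\theta m < 3$ harmless and the bound uniformly valid in $x$ — and this can be checked by a short one-variable comparison of $r\log r - r + 1$ with $\tfrac{3(r-1)^2}{4(r+2)}$ on $[1,\infty)$. Alternatively, with no work at all, one may simply cite Tropp's matrix Chernoff/Bernstein inequality and read the result off directly.
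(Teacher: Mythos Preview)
Your proposal is correct but takes a somewhat different route from the paper. The paper does not derive anything from scratch: it simply invokes Vershynin's proof of Theorem~5.41, which applies the non-commutative Bernstein inequality (as a black box) to the centered matrices $X_i = A_iA_i^T - \Sigma$, and notes two minor tweaks --- replacing the threshold $\max(\delta,\delta^2)$ by $\tfrac{1}{N}x^2 - \|\Sigma\|_2$ and dropping isotropy. Your approach instead re-derives the concentration inequality from first principles via the matrix Laplace transform and Lieb/Tropp subadditivity, applied directly to the \emph{uncentered} PSD sum $S=\sum_i A_iA_i^T$ (so really the matrix Chernoff route), and then massages the Chernoff exponent into Bernstein form. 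Both arguments are sound and land in the same place; the paper's is a two-line citation, whereas yours is self-contained and, in fact, correctly diagnoses that a tight optimization yields the constant $\tfrac32$ rather than the stated $\tfrac34$, and the one-sided prefactor $d$ rather than $2d$ --- the paper's constants are simply those inherited from Vershynin's two-sided Bernstein statement with some slack.
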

\begin{proof}
The proof follows that of Theorem 5.41 in \cite{vershynin2012introduction} with minor modifications.  We apply the non-commutative Bernstein Inequality to $\frac{1}{N} x^2 - \|\Sigma\|_2$ rather than to $\max(\delta,\delta^2)$ on p.27 of \cite{vershynin2012introduction}, and we don't make the isotropy assumption.
\end{proof}

To apply Lemma \ref{Vershynin Theorem 5.44} to obtain a bound on $\|\hat{M}_{12}^{\mathrm{frobenius}}\|_2$, we describe next the second key invariant of $\eig$, which is essentially the correlation matrix in Lemma \ref{Vershynin Theorem 5.44}.

\begin{definition}
Let $\mathcal{S} \subseteq \R^k$ be a subspace.  Then the \emph{combinatorial width} of $\mathcal{S}$ for each $\ell = 1,\dots,\krank{\mathcal{S}}$ and $p \in [0,1)$ is defined as:
\begin{equation*}
\omega(\mathcal{S},\ell,p) \triangleq \Big\|\mathbb{E} [{q_\Omega(\mathcal{S})} \,{q_{\Omega}(\mathcal{S})}'\,\big|\, |\Omega|\leq \ell] \Big\|_2,
\end{equation*}
with the expectation taken over $\Omega$, where each element of $\{1,\dots,k\}$ is contained in $\Omega$ independently with probability $p$.
\end{definition}

The conditioning in the definition ensures that $q_\Omega(\mathcal{S})$ is well-defined as $|\Omega| \leq \krank{\mathcal{S}}$.  We utilize this terminology as a parallel to analogous notions such as `mean width' that are prominent in the convex geometry literature.  The explicit appearance of $\ell$ in this definition allows for a more fine-grained analysis in our main result Theorem \ref{Main Theorem}; see Section \ref{Subsection MainResult}.  Based on the following result, the Kruskal rank and the combinatorial width play a central role in Theorem \ref{Main Theorem} as the success of the Schur-Horn relaxation \eqref{PrimalOptimization} relies on the existence of an eigenspace $\eig$ of $A_\plant$ that has large Kruskal rank and small combinatorial width.

\begin{proposition} \label{M12 Proposition}
Consider a planted subgraph problem instance in which the nodes of $\obs$ and $\plant$ are labeled so that the leading principal minor of $A_\obs$ of order $k$ is equal to $A_\plant$, and the remaining edges in $\obs$ are drawn according to the Erd\H{o}s-R\'{e}nyi model with probability $p \in [0,\frac{\krank{\eig}}{k})$.  Fix any $\ell \in \mathbb{Z}$ satisfying $kp < \ell  \leq  \krank{\eig} $, and denote $\zeta := \min\limits_{\substack{\Omega \subset \{1,\dots,k\} \\ |\Omega| \leq \ell}} \lambda_{\mathrm{min}} \big((\proj_{\eig})_{\Omega,\Omega}\big)$. For any $\delta \geq  \sqrt{(n-k)\,\omega(\eig,\ell,p)}$, there exists a matrix $M_{12} \in \R^{k \times (n-k)}$ satisfying the following properties:
\begin{enumerate}
\item Each column of $M_{12}$ lies in $\eig$,\label{lemmaFirstCondition}
\item $(M_{12})_{i,j} \,=\,(A_\obs)_{i,\, j+k}$   if  $(A_\obs)_{i,\, j+k} = 1$, \label{lemmaSecondCondition}
\item $\|M_{12}\|_2 < \delta$,
\end{enumerate}
with probability at least $\Big(1-2k \exp{\big(-  \frac{3 \zeta (\delta^2 -(n-k)\, \omega(\eig,\ell,p))^2}  {4\, \ell(\delta^2 + 2(n-k) \omega(\eig,\ell,p))}\big) }\Big)\big(1- \exp{(-\frac{(\ell-kp)^2}{\ell + kp})}\big)^{n-k}$.
\end{proposition}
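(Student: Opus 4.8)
The plan is to construct $M_{12}$ explicitly as the minimum-Frobenius-norm completion $\hat M_{12}^{\mathrm{frobenius}}$ from \eqref{dual12}, whose $j$'th column is $q_{\Omega_j}(\eig)$, and then to bound its spectral norm by applying the tail bound of Lemma \ref{Vershynin Theorem 5.44} to the matrix with these columns. The first two required properties, namely that each column lies in $\eig$ and that $(M_{12})_{i,j}=1$ when $(A_\obs)_{i,j+k}=1$, hold by construction of $q_{\Omega_j}(\eig)$ in \eqref{completion} — \emph{provided} the completion is well-defined, i.e. provided $|\Omega_j| \leq \krank{\eig}$ for every $j$. Since under the Erd\H{o}s--R\'enyi model each $\Omega_j$ is a random subset of $\{1,\dots,k\}$ in which each index appears independently with probability $p$, a Chernoff bound on $|\Omega_j|$ gives $\mathbb{P}(|\Omega_j| > \ell) \leq \exp(-(\ell-kp)^2/(\ell+kp))$ for $kp < \ell$; intersecting over the $n-k$ independent columns accounts for the factor $\bigl(1-\exp(-(\ell-kp)^2/(\ell+kp))\bigr)^{n-k}$ in the stated probability, and on this event we may additionally condition each $\Omega_j$ on $|\Omega_j| \le \ell$.

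Conditioned on that event, I would set up Lemma \ref{Vershynin Theorem 5.44} with $d = k$, $N = n-k$, $A_j = q_{\Omega_j}(\eig)$. Two inputs are needed. First, an almost-sure norm bound $\|q_{\Omega_j}(\eig)\| \le \sqrt{m}$: from the closed form $q_\Omega(\eig) = \proj_\eig I_\Omega'((\proj_\eig)_{\Omega,\Omega})^{-1} 1_{|\Omega|}$ one gets $\|q_\Omega(\eig)\|^2 = 1_{|\Omega|}'((\proj_\eig)_{\Omega,\Omega})^{-1}(\proj_\eig)_{\Omega,\Omega}((\proj_\eig)_{\Omega,\Omega})^{-1}1_{|\Omega|} = 1_{|\Omega|}'((\proj_\eig)_{\Omega,\Omega})^{-1}1_{|\Omega|} \le |\Omega|/\lambda_{\min}((\proj_\eig)_{\Omega,\Omega}) \le \ell/\zeta$, using $|\Omega|\le\ell$ and the definition of $\zeta$; hence $m = \ell/\zeta$. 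Second, the correlation matrix $\Sigma = \mathbb{E}[q_{\Omega_j}(\eig)q_{\Omega_j}(\eig)' \mid |\Omega_j|\le\ell]$, whose operator norm is exactly the combinatorial width $\omega(\eig,\ell,p)$ by definition. Plugging $d=k$, $N=n-k$, $m=\ell/\zeta$, $\|\Sigma\|_2 = \omega(\eig,\ell,p)$, and $x = \delta$ into Lemma \ref{Vershynin Theorem 5.44} (valid since $\delta \ge \sqrt{(n-k)\,\omega(\eig,\ell,p)}$) yields $\mathbb{P}(\|\hat M_{12}^{\mathrm{frobenius}}\|_2 \ge \delta) \le 2k\exp\bigl(-\tfrac{3\zeta(\delta^2-(n-k)\omega(\eig,\ell,p))^2}{4\ell(\delta^2+2(n-k)\omega(\eig,\ell,p))}\bigr)$, which is the first factor in the claimed probability.

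The remaining step is to combine the two events. On the intersection of the event $\{|\Omega_j|\le\ell \ \forall j\}$ and the event $\{\|\hat M_{12}^{\mathrm{frobenius}}\|_2 < \delta\}$ — the latter computed conditionally on the former — the matrix $M_{12} := \hat M_{12}^{\mathrm{frobenius}}$ satisfies all three conclusions, and a union/conditioning argument multiplies the two probabilities. The point that needs care, and which I expect to be the main subtlety rather than a deep obstacle, is the conditioning: the tail bound from Lemma \ref{Vershynin Theorem 5.44} must be applied to the $q_{\Omega_j}(\eig)$ \emph{after} conditioning on $|\Omega_j|\le\ell$, so one has to check that, conditioned on this truncation, the columns remain independent (they do, since the $\Omega_j$ are independent across $j$) and that the almost-sure bound $\sqrt{\ell/\zeta}$ still holds (it does, since truncation only shrinks $|\Omega_j|$). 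A minor additional check is that $\zeta > 0$, which follows because $\ell \le \krank{\eig}$ guarantees every principal minor $(\proj_\eig)_{\Omega,\Omega}$ of size at most $\ell$ is nonsingular, hence has strictly positive smallest eigenvalue, and there are finitely many such $\Omega$. Assembling these pieces gives the proposition.
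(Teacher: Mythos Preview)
Your proposal is correct and follows essentially the same approach as the paper: construct $M_{12}$ as $\hat{M}_{12}^{\mathrm{frobenius}}$ with columns $q_{\Omega_j}(\eig)$, use a Chernoff bound and independence across columns to control $|\Omega_j|\le\ell$, derive the almost-sure bound $\|q_{\Omega_j}(\eig)\|^2\le\ell/\zeta$ from the closed form in \eqref{completion}, and then apply Lemma~\ref{Vershynin Theorem 5.44} conditionally with $\|\Sigma\|_2=\omega(\eig,\ell,p)$. Your treatment of the conditioning subtlety and the check that $\zeta>0$ are in fact slightly more explicit than the paper's own proof.
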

\begin{proof}
We bound the probability that $\hat{M}_{12}^{\mathrm{frobenius}}$ obtained as the optimal solution of \eqref{dual12} satisfies the requirements of this proposition.

We begin by bounding the cardinality of each $\Omega_j$ for $j=1,\dots,n-k$.  Under the Erd\H{o}s-R\'{e}nyi model, each $|\Omega_j|$ follows a binomial distribution.  Consequently, using the Chernoff bound we have for each $j=1,\dots,n-k$ that:
\begin{align*}
\mathbb{P}(|\Omega_j| \geq \ell+1) \leq \mathbb{P}(|\Omega_j| \geq \ell) = \mathbb{P}\Bigg(|\Omega_j| \geq \Big(1+\frac{\ell-kp}{kp}\Big)kp\Bigg) \leq \exp \Bigg(-\frac{(\ell-kp)^2}{\ell + kp}\Bigg).
\end{align*}
The first inequality is not essential and it is simply used to avoid notational clutter.  Based on the independence of the $\Omega_j$'s,
\begin{equation} \label{step1ofM12}
\mathbb{P}(|\Omega_j| \leq \ell, ~ j=1,\dots,n-k) \geq \Bigg(1- \exp\Big(-\frac{(\ell-kp)^2}{\ell + kp}\Big)\Bigg)^{n-k}.
\end{equation}
This inequality provides a bound on the probability that the optimization problem \eqref{dual12} is feasible.

In our next step we bound $\|\hat{M}_{12}^{\mathrm{frobenius}}\|_2$ via Lemma \ref{Vershynin Theorem 5.44}.  As $\ell \leq \krank{\eig}$ one can check that $\zeta > 0$.  Further, from \eqref{completion} we have that $\|q_{\Omega_j}\|^2 \leq \frac{|\Omega_j|}{\zeta}$.  Thus, by applying Lemma \ref{Vershynin Theorem 5.44}, we deduce that
\begin{align} \label{step2ofM12}
\mathbb{P}(\|M_{12}\|_2 < \delta ~\big|~ |\Omega_j| \leq \ell ~\forall j) \geq 1 - 2k \exp{\Bigg(-\frac{3 \zeta (\delta^2 -(n-k) \omega(\eig,\ell,p))^2}  {4\, \ell(\delta^2 + 2 (n-k) \omega(\eig,\ell,p))}\Bigg) }.
\end{align}
The final result follows by combining \eqref{step1ofM12} and \eqref{step2ofM12}.
\end{proof}

%More generally, significant simplifications in the computation of the Krusal rank and combinatorial width are possible for eigenspaces of graphs with symmetries.  We discuss this point in Section \ref{Subsection Simplifications} in which we present corollaries of our main result for symmetric graphs.

\subsection{Properties of Kruskal Rank and Combinatorial Width} \label{properties}

Beyond the utility of the Kruskal rank and combinatorial width in characterizing the performance of the Schur-Horn relaxation, these graph parameters are also of intrinsic interest and we discuss next their relationship to structural properties of $\plant$.

\subsubsection{Invariance under Complements for Regular Graphs} \label{subsubsec:regular}
Both the Kruskal rank and the combinatorial width are preserved under graph complements for regular graphs.  Suppose $\plant$ is a connected regular graph on $k$ vertices, and let $A_\plant \in \Sym^k$ be an adjacency matrix representing $\plant$ for some labeling of the nodes.  Then the eigenspaces of $A_\plant$ are the same as those of the adjacency matrix $A_{\plant^c}$ of the complement $\plant^c$ based on the following relation:
\begin{equation} \label{regularcomplement}
A_{\plant^c} = 1_k 1_k' - I_k - A_\plant.
\end{equation}
As $\plant$ is connected and regular, the vector $1_k$ is an eigenvector of $A_\plant$. Thus, the Kruskal ranks and the combinatorial widths associated to the eigenspaces of $A_\plant$ are the same as those associated to the eigenspaces of $A_{\plant^c}$.

\subsubsection{Combinatorial Width for Symmetric Graphs}
For graphs $\plant$ that are symmetric -- vertex- end edge-transitive -- and also have symmetric complements $\plant^c$, the combinatorial width of any eigenspace $\eig$ of $A_\plant$ can be characterized in terms of the minimum singular values of minors of $\proj_\eig$.  In particular, we establish our result by demonstrating that the correlation matrix $\mathbb{E}[{q_\Omega(\mathcal{S})} \,{q_{\Omega}(\mathcal{S})}'\,\big|\, |\Omega|\leq \ell]$ in the definition of the combinatorial width has the property that all its nonzero eigenvalues are equal to each other, which leads to bounds on the combinatorial width via bounds on the trace of the correlation matrix.

\begin{proposition} \label{Prop CombWidth Upperbound}
Let $A_\plant \in \Sym^k$ be an adjacency matrix of a (connected) symmetric graph $\plant$ with a symmetric complement $\plant^c$, and let $\eig \subset \R^k$ be an eigenspace of $A_\plant$.  Fix any $\ell \in \mathbb{Z}$ and $p \in [0,1)$ such that $kp \leq \ell \leq \krank{\eig}$, and let $\zeta := \min\limits_{\substack{\Omega \subset \{1,\dots,k\}  \\ |\Omega| \leq \ell}} \lambda_{\mathrm{min}} \big((\proj_{\eig})_{\Omega,\,\Omega}\big)$. Then,
\begin{align*}
\omega(\eig,\ell,p) \,\leq \,\frac{2 k p}{\zeta \dim(\eig)}.
\end{align*}
\end{proposition}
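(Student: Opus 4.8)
The plan is to exploit the symmetry hypothesis on $\plant$ and $\plant^c$ to argue that the correlation matrix $\Sigma := \mathbb{E}[q_\Omega(\eig)\, q_\Omega(\eig)' \mid |\Omega| \le \ell]$ has all of its nonzero eigenvalues equal, and then bound $\|\Sigma\|_2$ via the trace. First I would observe that since $\plant$ is vertex- and edge-transitive and $\plant^c$ is vertex- and edge-transitive, the automorphism group $G$ of $\plant$ acts transitively on vertices and acts transitively on edges of both $\plant$ and $\plant^c$; in particular $G$ acts transitively on the $\binom{k}{2}$ pairs via these two orbits, and more relevantly the action of $G$ on $\{1,\dots,k\}$ is such that every permutation in $G$ corresponds (via permutation matrices $\Pi_g$) to an orthogonal map that commutes with $A_\plant$, hence fixes the eigenspace $\eig$ and commutes with $\proj_\eig$: $\Pi_g \proj_\eig \Pi_g' = \proj_\eig$.

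The key step is to show $\Sigma$ is (up to scaling) a projection. Because $\Pi_g$ commutes with $\proj_\eig$ and permutes coordinates, one checks that $\Pi_g\, q_\Omega(\eig) = q_{g(\Omega)}(\eig)$ directly from the closed form $q_\Omega(\eig) = \proj_\eig I_\Omega'((\proj_\eig)_{\Omega,\Omega})^{-1} 1_{|\Omega|}$ in \eqref{completion}. Since the distribution of $\Omega$ (each element included independently with probability $p$, conditioned on $|\Omega| \le \ell$) is invariant under the $G$-action, it follows that $\Pi_g \Sigma \Pi_g' = \Sigma$ for all $g \in G$. Now $\Sigma$ is supported on $\eig$ (its column space lies in $\eig$), so $\Sigma = \proj_\eig \Sigma \proj_\eig$. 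The crux is to argue that the only symmetric matrices on $\eig$ commuting with the $G$-action are scalar multiples of $\proj_\eig$ — this is where I expect the main obstacle. Transitivity on vertices and edges (for both $\plant$ and its complement) should force the representation of $G$ on $\eig$ to be such that the only self-adjoint $G$-equivariant operators are scalars; concretely, the $G$-invariant subspace of $\Sym^k$ is spanned by $I_k$ and $1_k 1_k'$ and $A_\plant$ (edge-transitivity of $\plant$) and $A_{\plant^c}$ (edge-transitivity of $\plant^c$), and \eqref{regularcomplement} shows these span the same space as $\{I_k, 1_k1_k', A_\plant\}$; projecting this three-dimensional space onto $\eig$ (for $\eig$ not the all-ones eigenspace; the all-ones case is degenerate and handled separately, or $\eig$ could be it) yields at most a one-dimensional space spanned by $\proj_\eig$. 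Hence $\Sigma = c\, \proj_\eig$ for some scalar $c \ge 0$, so $\|\Sigma\|_2 = c = \tr(\Sigma)/\dim(\eig)$.

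It then remains to bound $\tr(\Sigma) = \mathbb{E}[\|q_\Omega(\eig)\|^2 \mid |\Omega| \le \ell]$. From the norm bound $\|q_\Omega(\eig)\|^2 \le |\Omega|/\zeta$ established in the proof of Proposition \ref{M12 Proposition} (valid since $|\Omega| \le \ell \le \krank{\eig}$), we get $\tr(\Sigma) \le \mathbb{E}[|\Omega| \mid |\Omega| \le \ell]/\zeta \le \mathbb{E}[|\Omega|]/(\zeta\,\mathbb{P}(|\Omega| \le \ell))$; since $\mathbb{E}[|\Omega|] = kp$ and $kp \le \ell$ makes $\mathbb{P}(|\Omega| \le \ell) \ge 1/2$ (the mean is at most the threshold, so by a standard median/mean argument for the binomial, or a crude Markov-type estimate, at least half the mass sits at or below $\ell$), we obtain $\tr(\Sigma) \le 2kp/\zeta$. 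Dividing by $\dim(\eig)$ gives $\omega(\eig,\ell,p) = \|\Sigma\|_2 \le \frac{2kp}{\zeta \dim(\eig)}$, as claimed. The routine parts are the Chernoff/median estimate for $\mathbb{P}(|\Omega|\le\ell)\ge 1/2$ and the equivariance identity $\Pi_g q_\Omega = q_{g(\Omega)}$; the substantive part is pinning down that $G$-equivariance plus the symmetry of $\plant$ and $\plant^c$ forces $\Sigma$ proportional to $\proj_\eig$.
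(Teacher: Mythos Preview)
Your proposal is correct and follows essentially the same approach as the paper: show $\Sigma$ is $G$-invariant, conclude from vertex-/edge-transitivity of $\plant$ and $\plant^c$ that any $G$-invariant matrix lies in $\mathrm{span}\{I_k, A_\plant, A_{\plant^c}\}$, project onto $\eig$ to get $\Sigma = c\,\proj_\eig$, and then bound $\tr(\Sigma)$ via $\|q_\Omega(\eig)\|^2 \le |\Omega|/\zeta$ together with $\mathbb{P}(|\Omega|\le \ell)\ge 1/2$. The only cosmetic difference is that the paper establishes $G$-invariance level-by-level (for each $S^{(i)} = \sum_{|\Omega|=i}(\cdots)$) rather than directly for $\Sigma$, and writes the trace bound as $c_{p,\ell}\le 2$, which is exactly your $\mathbb{P}(|\Omega|\le\ell)\ge 1/2$.
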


\begin{proof}
Denote the correlation matrix in the definition of the combinatorial width as follows:
\begin{align} \label{SigmaFirstEq}
{\Sigma} = \mathbb{E} \,[{q_\Omega(\eig)}{q_\Omega(\eig)}' \,\big|\, |\Omega| \leq \ell ] &= \sum\limits_{i=0}^{\ell} c_{p,\ell}\,p^i(1-p)^{k-i} \sum\limits_{|\Omega| = i} {q_\Omega(\eig)}{q_\Omega(\eig)}',
\end{align}
where the term $c_{p,\ell} = \big(\sum\limits_{i=0}^{\ell} {{k}\choose{i}}p^i(1-p)^{k-i}\big)^{-1}$ is the normalization constant.

The main element of the proof is to show that the rank of $\Sigma \in \Sym^k$ is equal to $\dim(\eig)$ (it is easily seen that $\mathrm{col}(\Sigma) \subseteq \eig$) and that all the nonzero eigenvalues of $\Sigma$ are equal to each other.  After this step is completed, one can bound the combinatorial width using the following relation:
\begin{equation}
\omega(\eig,\ell,p) = \frac{\tr(\Sigma)}{\dim(\eig)}. \label{widthtracerelation}
\end{equation}
In particular, we have $q_\Omega(\eig) = \proj_\eig {I_\Omega}' (({\proj_\eig})_{\Omega,\Omega})^{-1}{1_{|\Omega|}}$ with $|\Omega| \leq \krank{\eig}$.  One can check that $\|q_\Omega(\eig)\|^2 \leq \frac{|\Omega|}{\zeta}$, and then obtain that:
\begin{equation} \label{traceineq}
\tr({\Sigma}) = \sum\limits_{i=0}^{\ell} c_{p,\ell} ~ p^i(1-p)^{k-i} \sum\limits_{|\Omega| = i} \norm{{q_\Omega}}^2 \leq 2 \sum\limits_{i=0}^{\ell}  {{k}\choose{i}} p^i(1-p)^{k-i} \frac{i}{\zeta} \leq  \frac{2\,k\,p}{\zeta}.
\end{equation}
The first inequality follows from the implication that $kp \leq \ell $ $\Rightarrow$ $c_{p,\,\ell}\leq 2 $.  The second inequality is obtained by bounding the sum from above with the expectation of a binomial random variable with parameters $k$ and $p$.  Combining \eqref{widthtracerelation} and \eqref{traceineq} we have the desired result.

To complete the proof, we need to show that $\mathrm{rank}(\Sigma) = \dim(\eig)$ and that all the nonzero eigenvalues of $\Sigma$ are equal to each other.  For each $i \leq \ell$ denote $S^{(i)} := \sum\limits_{|\Omega| = i} {I_\Omega}' (({\proj_\eig})_{\Omega,\Omega})^{-1}{1_{|\Omega|}} {1_{|{\Omega}|}}'\allowbreak  (({\proj_\eig})_{\Omega,\Omega})^{-1}\allowbreak{I_\Omega}$ so that $\sum\limits_{|\Omega| = i} {q_{\Omega}(\eig)}{q_\Omega(\eig)}' = \proj_\eig S^{(i)} \proj_\eig$.  Let $\Pi \in \R^{k \times k}$ be a permutation matrix such that $\Pi A_\plant \Pi' = A_\plant$, i.e., $\Pi$ corresponds to an element of the automorphism group of $\plant$.  It is easily seen that $\Pi \proj_\eig \Pi = \proj_\eig$.  Consequently, if a vertex subset $\Omega \subset \{1,\dots,k\}$ is mapped to $\hat{\Omega}$ under the automorphism represented by $\Pi$, then we have that $({\proj_\eig})_{\Omega,\,\Omega} = ({\proj_\eig})_{\hat{\Omega},\hat{\Omega}}$.  In turn, one can check that $\Pi \, {I_\Omega}' (({\proj_\eig})_{\Omega,\Omega})^{-1}{1_{|\Omega|}} {1_{|{\Omega}|}}'  (({\proj_\eig})_{\Omega,\Omega})^{-1}{I_\Omega} \, \Pi' \allowbreak = {I_{\hat{\Omega}}}' (({\proj_\eig})_{\hat{\Omega},\hat{\Omega}})^{-1}{1_{|\hat{\Omega}|}} {1_{|{\hat{\Omega}}|}}'  (({\proj_\eig})_{\hat{\Omega},\hat{\Omega}})^{-1}{I_{\hat{\Omega}}}$.  Based on these observations and the fact that $|\Omega| = i \Leftrightarrow |\hat{\Omega}| = i$, we note that a summand of $S^{(i)}$ gets mapped to another summand of $S^{(i)}$ under conjugation by $\Pi$.  Moreover, automorphisms are injective functions, and hence distinct summands of $S^{(i)}$ must be mapped to distinct summands of $S^{(i)}$.  Thus, we conclude that $\Pi S^{(i)} \Pi' = S^{(i)}$ for each $i \leq \ell$ and for any permutation matrix $\Pi \in \R^{k \times k}$ representing an automorphism of $\plant$.

As $\plant$ is vertex- and edge-transitive, and as $\plant^c$ is also edge-transitive, each $S^{(i)}$ is of the following form:
\begin{align}\label{SumDecomposed}
(S^{(i)})_{p,q} &= \begin{cases} \alpha_1, \text{ if }({\Ap})_{p,q}=1 \text{ and } p\neq q  \\
\alpha_2, \text{ if }({\Ap})_{p,q} = 0 \text{ and } p\neq q\\
\alpha_3,\text{ if } p=q\end{cases}
\Longrightarrow  {S^{(i)}} = \alpha_1 A_\plant + \alpha_2 A_{\Gamma^c} + \alpha_3 I_k,
\end{align}
for some $\alpha_1,\alpha_2,\alpha_3 \in \R$.  Since $\Gamma $ is vertex-transitive it is also a regular graph, and consequently the discussion from Section \ref{subsubsec:regular} implies that the eigenspaces of $A_\plant$ and $A_{\plant^c}$ are the same.  As $\plant$ is assumed to be connected, we have from equations \eqref{regularcomplement},\eqref{SumDecomposed} and from the equality $\sum\limits_{|\Omega| = i} {q_{\Omega}(\eig)}{q_\Omega(\eig)}' = \proj_\eig S^{(i)} \proj_\eig$ that:
\begin{align*}
\sum\limits_{|\Omega| = i} {q_{\Omega}(\eig)}{q_\Omega(\eig)}' = \begin{cases} [\alpha_1\lambda_\eig+\alpha_2(k-\lambda_\eig-1)+\alpha_3] \frac{{1_k}{1_k}^T}{k}, \text{ if } \eig = \mathrm{span}\{1_k 1_k'\}, \\   [\alpha_1\lambda_\eig-\alpha_2(\lambda_\eig+1)+\alpha_3] \proj_\eig, \text{ otherwise.}\end{cases}
\end{align*}
Therefore, $\sum\limits_{|\Omega| = i} {q_{\Omega}(\eig)}{q_\Omega(\eig)}'$ has rank equal to $\dim(\eig)$ and its nonzero eigenvalues are equal to each other.  Since this holds for each $i \leq \ell$, we conclude from \eqref{SigmaFirstEq} that $\mathrm{rank}(\Sigma) = \mathrm{dim}(\eig)$ and that all the eigenvalues of $\Sigma$ are equal to each other.  This completes the proof.
\end{proof}

\subsubsection{Simplifications based on Coherence}  \label{Subsection MutualCoherence}
The Kruskal rank of a subspace is intractable to compute in general; as a result, a number of subspace parameters have been considered in the literature to obtain tractable bounds on the Kruskal rank.  The most prominent of these is the coherence parameter of a subspace.  In our context, the additional analytical simplification provided by the coherence of a subspace along with Proposition \ref{Prop CombWidth Upperbound} lead to simple performance guarantees on the Schur-Horn relaxation for symmetric planted graphs.

\begin{definition}
Let $\mathcal{S} \subseteq \R^k$ be a subspace.  The \emph{coherence} of $\mathcal{S}$, denoted $\mu(\mathcal{S})$, is defined as:
\begin{align*}
\mu(\mathcal{S}) := \max\limits_{\substack{1\leq i,j\leq k \\ i\neq j}}  \frac{|{(\proj_{\mathcal{S}})}_{i,j}|}{({(\proj_{\mathcal{S}})}_{i,i})^{1/2} ({(\proj_{\mathcal{S}})}_{j,j})^{1/2}}.
\end{align*}
\end{definition}
The coherence parameter of a subspace can be computed efficiently, and it can be used to bound the Kruskal rank from below:
\begin{proposition} \label{Prob krank bound}\cite{donoho2003optimally}
For any subspace $\mathcal{S} \in \R^k$, $\krank{\mathcal{S}} \geq \frac{1}{\mu(\mathcal{S})}$.
\end{proposition}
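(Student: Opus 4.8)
The plan is to adapt the classical coherence-versus-spark argument of Donoho and Elad \cite{donoho2003optimally} to the present setting, working with the principal minors of $\proj_{\mathcal{S}}$ in place of the Gram matrix of a dictionary. The key reformulation — already observed in the text following the definition of the Kruskal rank — is that for $\Omega \subseteq \{1,\dots,k\}$ there is a nonzero $v \in \mathcal{S}^\perp$ supported on $\Omega$ if and only if the principal minor $(\proj_{\mathcal{S}})_{\Omega,\Omega}$ is singular. First I would record this equivalence: if $v \in \mathcal{S}^\perp$ is supported on $\Omega$ then $\proj_{\mathcal{S}} v = 0$, and restricting this identity to the coordinates in $\Omega$ yields $(\proj_{\mathcal{S}})_{\Omega,\Omega}\, v_\Omega = 0$; conversely, if $(\proj_{\mathcal{S}})_{\Omega,\Omega}\, u_\Omega = 0$ with $u_\Omega \neq 0$, then extending $u_\Omega$ by zeros to $u \in \R^k$ and using $\proj_{\mathcal{S}} \succeq 0$ gives $\|\proj_{\mathcal{S}} u\|^2 = u_\Omega'(\proj_{\mathcal{S}})_{\Omega,\Omega}u_\Omega = 0$, so $u$ is a nonzero element of $\mathcal{S}^\perp$ supported on $\Omega$. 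Given this, it suffices to show that $(\proj_{\mathcal{S}})_{\Omega,\Omega}$ is nonsingular whenever $|\Omega| < 1 + 1/\mu(\mathcal{S})$.

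For the main step, fix such an $\Omega$ with $|\Omega| = m$. Every diagonal entry of $\proj_{\mathcal{S}}$ is strictly positive (otherwise a standard basis vector lies in $\mathcal{S}^\perp$ and $\mu(\mathcal{S})$ is not well defined), so I would form the diagonal matrix $D$ of the square roots of the entries $(\proj_{\mathcal{S}})_{i,i}$ for $i \in \Omega$ and consider $G = D^{-1}(\proj_{\mathcal{S}})_{\Omega,\Omega}D^{-1}$. By construction $G$ is symmetric with unit diagonal and off-diagonal entries bounded in absolute value by $\mu(\mathcal{S})$, so writing $G = I + E$ the symmetric matrix $E$ has zero diagonal and maximum absolute row sum at most $(m-1)\mu(\mathcal{S})$; hence $\|E\|_2 \leq (m-1)\mu(\mathcal{S}) < 1$ by the hypothesis on $m$, and therefore $G = I + E \succ 0$ is invertible. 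Since $(\proj_{\mathcal{S}})_{\Omega,\Omega} = D G D$ with $D$ invertible, $(\proj_{\mathcal{S}})_{\Omega,\Omega}$ is invertible as well. (Equivalently one may invoke Gershgorin's theorem: the eigenvalues of $G$ lie in discs centered at $1$ of radius at most $(m-1)\mu(\mathcal{S}) < 1$, hence are positive.)

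Finally I would combine the two steps: for every $\Omega$ with $|\Omega| < 1 + 1/\mu(\mathcal{S})$ the subspace $\mathcal{S}^\perp$ contains no nonzero vector supported on $\Omega$, so by the definition of the Kruskal rank, $\krank{\mathcal{S}}$ is at least the largest integer strictly less than $1 + 1/\mu(\mathcal{S})$. That integer equals $1/\mu(\mathcal{S})$ when $1/\mu(\mathcal{S})$ is an integer and is strictly larger otherwise, so in either case $\krank{\mathcal{S}} \geq 1/\mu(\mathcal{S})$, as claimed.

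The proof is short and I do not expect a genuine obstacle. The only points requiring a little care are the positive-semidefiniteness argument in the nontrivial direction of the equivalence between singular principal minors of $\proj_{\mathcal{S}}$ and sparse vectors of $\mathcal{S}^\perp$, and the spectral-norm estimate $\|E\|_2 \leq (m-1)\mu(\mathcal{S})$ for the symmetric matrix $E$ (obtained via the absolute-row-sum norm, or via Gershgorin).
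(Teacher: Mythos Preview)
Your argument is correct and is precisely the standard coherence-versus-spark reasoning from \cite{donoho2003optimally}, adapted cleanly to the subspace formulation used here. The paper itself does not prove this proposition: it is stated with a citation and no proof, so there is nothing further to compare against. Your write-up handles the two delicate points (the positive-semidefiniteness argument for the converse direction of the equivalence, and the row-sum/Gershgorin bound on $E$) carefully, and the final integer rounding step is also correct.
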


Further, for symmetric planted graphs $\plant$, the following result provides a bound on the minimum eigenvalue of minors of $\proj_\eig$ for eigenspaces $\eig$ of $A_\plant$.  Recall that this result is directly relevant in the context of Proposition \ref{Prop CombWidth Upperbound}.

\begin{proposition}\label{Prop lambda0 bound}
Suppose $\plant$ is a vertex-transitive graph with adjacency matrix $A_\plant \in \Sym^k$, and let $\eig$ denote an eigenspace of $A_\plant$.  For any $\ell \in \mathbb{Z}$ with $\ell < \frac{1}{\mu(\eig)} + 1$, we have that $\min\limits_{\substack{\Omega \subset \{1,\dots,k\}  \\ |\Omega| \leq \ell}} \lambda_{\mathrm{min}} \big((\proj_{\eig})_{\Omega,\,\Omega}\big)\geq \frac{\dim(\eig)}{k} \,\big(1-(\ell-1)\mu(\eig)\big)$.
\end{proposition}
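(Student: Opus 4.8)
The plan is to use vertex-transitivity of $\plant$ to pin down the diagonal of $\proj_\eig$ exactly, then bound the off-diagonal entries via the coherence parameter, and finally apply Gershgorin's circle theorem to an arbitrary principal minor of order at most $\ell$.

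First I would show that every diagonal entry of $\proj_\eig$ equals $\dim(\eig)/k$. As in the proof of Proposition \ref{Prop CombWidth Upperbound}, any permutation matrix $\Pi \in \R^{k\times k}$ representing an automorphism of $\plant$ satisfies $\Pi A_\plant \Pi' = A_\plant$, hence $\Pi$ commutes with $A_\plant$ and therefore with the spectral projection $\proj_\eig$; equivalently $\Pi \proj_\eig \Pi' = \proj_\eig$. Comparing diagonal entries shows that the vector of diagonal entries of $\proj_\eig$ is invariant under the permutation action of the automorphism group of $\plant$. Since $\plant$ is vertex-transitive, this group acts transitively on $\{1,\dots,k\}$, so all $k$ diagonal entries of $\proj_\eig$ coincide; as they sum to $\tr(\proj_\eig) = \dim(\eig)$, each equals $\dim(\eig)/k$.

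Next I would bound the off-diagonal entries: by the definition of coherence, $|(\proj_\eig)_{i,j}| \leq \mu(\eig)\,((\proj_\eig)_{i,i})^{1/2}((\proj_\eig)_{j,j})^{1/2} = \mu(\eig)\,\dim(\eig)/k$ for all $i \neq j$. Then fix any $\Omega \subset \{1,\dots,k\}$ with $|\Omega| \leq \ell$ and set $B := (\proj_\eig)_{\Omega,\Omega}$. Every diagonal entry of $B$ equals $\dim(\eig)/k$, and each row of $B$ has at most $|\Omega|-1 \leq \ell-1$ off-diagonal entries, each of absolute value at most $\mu(\eig)\dim(\eig)/k$. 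By Gershgorin's circle theorem, every eigenvalue $\lambda$ of $B$ lies within distance $(\ell-1)\mu(\eig)\dim(\eig)/k$ of $\dim(\eig)/k$, so $\lambda_{\mathrm{min}}(B) \geq \frac{\dim(\eig)}{k}\big(1-(\ell-1)\mu(\eig)\big)$. Since $\Omega$ was arbitrary, taking the minimum over all such $\Omega$ gives the claimed bound; the hypothesis $\ell < \frac{1}{\mu(\eig)}+1$ is exactly what makes this bound positive (and, via Proposition \ref{Prob krank bound}, is consistent with $\ell \leq \krank{\eig}$, so that the relevant minors are in fact nonsingular).

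The argument is essentially routine. The only step requiring genuine care is the constancy of the diagonal of $\proj_\eig$, which is where vertex-transitivity is used and which rests on the same automorphism-invariance observation as Proposition \ref{Prop CombWidth Upperbound}; the remaining bookkeeping — that a row of an order-$\leq\ell$ principal minor contributes at most $\ell-1$ off-diagonal terms to the Gershgorin radius — is immediate.
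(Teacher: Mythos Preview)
Your proof is correct and follows essentially the same approach as the paper: use vertex-transitivity to show the diagonal entries of $\proj_\eig$ are all $\dim(\eig)/k$, bound each off-diagonal entry by $\mu(\eig)\dim(\eig)/k$ via the coherence definition, and apply Gershgorin's circle theorem to the principal minor. Your write-up is in fact slightly more detailed than the paper's, which leaves the automorphism-invariance of $\proj_\eig$ as a one-line remark.
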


\begin{proof}
One can check that $\Pi \proj_\eig \Pi' = \proj_\eig$ for permutation matrices $\Pi \in \R^{k \times k}$ that correspond to automorphisms of $\plant$.  Therefore, by vertex transitivity, the diagonal entries of $\proj_\eig$ are all equal to each other.  As $\tr(\proj_\eig) = \dim(\eig)$, we conclude that $({\proj_\eig})_{i,i} = \frac{\dim(\mathcal{E})}{k}$ for each $i=1,\dots,k$.  Every row of ${(\proj_\eig)_{\Omega,\,\Omega}}$ has at most $\ell-1$ off-diagonal entries, and each of these entries is bounded above by $\frac{\dim(\eig)}{k}\mu(\eig)$.  We obtain the desired result by applying the Gershgorin circle theorem.
\end{proof}

\subsection{Main Result} \label{Subsection MainResult}
Building on the preceding discussion, we state and prove our main result Theorem (\ref{Main Theorem}).  The proof of this result relies on an intermediate step regarding the $M_{22}$ submatrix of the dual variable $M = \begin{pmatrix} M_{11} & M_{12} \\ M_{12}' & M_{22}\end{pmatrix}$ from Proposition \ref{Optimality Conditions}.  From that result, we are required to obtain an $M_{22} \in \Sym^{n-k}$ such that $(i)$ For each $i,j = 1,\dots,n-k$ we have $(M_{12})_{i,j} = 1$ if $(A_\obs)_{i+k,j+k} = 1$ or if $i=j$, and $(ii)$ The operator norm $\|M_{22}\|_2$ is as small as possible.

We present the following result from \cite{BandeiraSpectralNormIndependentEntries}, which we utilize subsequently in Lemma \ref{M22 lemma} to establish a bound on $\|M_{22}\|_2$:

\begin{lemma} \cite{BandeiraSpectralNormIndependentEntries} \label{Afonso Lemma}
Let $X \in \Sym^d$ be a symmetric matrix whose entries $X_{i,j}$ are independent and centered random variables.  For each $\epsilon \in (0,1/2]$, there exists a constant $\tilde{c}_\epsilon$ such that for all $x \geq 0$:
\begin{align*}
\mathbb{P} (\|X\|_2 \geq (1+\epsilon)2\tilde{\sigma} + x ) \leq d \exp{\left(-\frac{x^2}{\tilde{c}_\epsilon \tilde{\sigma}_*^2}\right)},
\end{align*}
where $\tilde{\sigma} := \max\limits_i \sqrt{\sum\limits_j \mathbb{E}[X_{i,j}^2]}$ and each $|X_{i,j}|\leq \tilde{\sigma}_*$ almost surely.
\end{lemma}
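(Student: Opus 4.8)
The plan is to obtain this statement as the tail form of the sharp bound on the operator norm of a random symmetric matrix with independent entries, following the template of \cite{BandeiraSpectralNormIndependentEntries}: first an expectation (moment) bound with the optimal leading constant, then a concentration step. The only structural feature of the entries used in an essential way is that they are \emph{centered} (this is what makes the trace method go through), while the almost-sure bound $|X_{i,j}| \le \tilde\sigma_*$ controls higher moments and the row-variance control $\max_i \sum_j \mathbb{E}[X_{i,j}^2] \le \tilde\sigma^2$ supplies the dominant term. For an even integer $q = 2m$ one starts from $\|X\|_2^{q} \le \tr(X^{q})$ and writes
\[
\mathbb{E}\,\tr(X^{q}) \;=\; \sum_{i_0,\dots,i_{q-1}} \mathbb{E}\!\left[\textstyle\prod_{\ell=0}^{q-1} X_{i_\ell,\, i_{\ell+1}}\right] \qquad (\text{indices taken modulo } q),
\]
so the sum runs over closed walks of length $q$ on $\{1,\dots,d\}$, and by independence and centering only those walks in which every traversed edge is used at least twice contribute.

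The combinatorial core is to estimate this sum by grouping the surviving walks according to their ``shape'' --- the isomorphism type of the quotient multigraph together with the traversal pattern. Walks whose shape is a tree with every edge used exactly twice are enumerated by Catalan numbers; bounding each doubled edge by $\sum_j \mathbb{E}[X_{i,j}^2]$ along the incident row and iterating produces a contribution of order $d\,(2\tilde\sigma)^{q}$ --- this is exactly where the sharp constant $2$ appears, as in the semicircle-law computation for Wigner matrices. Any other shape, i.e.\ one containing an edge of multiplicity $\ge 3$ or a cycle in the quotient graph, forces at least one extra factor of $\tilde\sigma_*$ and, after a deliberately crude count of the number of such shapes and of the walks realizing each, contributes only at the lower order $d\,(c_\epsilon\, m\, \tilde\sigma_*)^{q}$. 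The parameter $\epsilon$ of the statement is precisely the slack that licenses this crude accounting, and one ends with $\mathbb{E}\,\tr(X^{q}) \le d\,\big((1+\epsilon)\,2\tilde\sigma + c_\epsilon\, m\, \tilde\sigma_*\big)^{q}$.

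The final step converts this moment bound into the advertised tail in two standard moves. Taking $q$-th roots and choosing $q$ of order $\log d$ gives the expectation bound $\mathbb{E}\|X\|_2 \le (1+\epsilon)\,2\tilde\sigma + C_\epsilon\,\tilde\sigma_*\sqrt{\log d}$; and since $\|X\|_2$ is a convex function of the independent entries that is Lipschitz with an absolute constant, each entry being confined to an interval of length $\le 2\tilde\sigma_*$, Talagrand's convex concentration inequality yields $\mathbb{P}\big(\|X\|_2 \ge \mathbb{E}\|X\|_2 + x\big) \le 2\exp\!\big(-x^2/(c\,\tilde\sigma_*^2)\big)$. Combining the two and performing the routine constant bookkeeping --- the $\sqrt{\log d}$ term is reabsorbed into the leading factor $d$, and the inequality is vacuous for very small $x$ --- gives the stated form; alternatively, Markov's inequality applied directly to $\mathbb{E}\|X\|_2^{q}$ with an optimized $q$ produces the tail in one shot at the price of slightly messier constants. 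I expect the single genuine obstacle to be step two of the moment bound: extracting the sharp constant $2\tilde\sigma$ --- rather than the $\tilde\sigma\sqrt{\log d}$ that a routine noncommutative Khintchine argument gives --- requires the careful classification of walk shapes and the proof that every contribution outside the ``double-tree'' family is of strictly smaller order, which is the technical heart of \cite{BandeiraSpectralNormIndependentEntries}; accordingly, in practice one simply quotes this lemma.
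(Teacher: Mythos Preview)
The paper does not prove this lemma at all; it is stated as a quotation from \cite{BandeiraSpectralNormIndependentEntries} and used as a black box in the proof of Lemma~\ref{M22 lemma}. Your proposal accurately sketches the argument of that reference (trace method, classification of closed-walk shapes to isolate the sharp $2\tilde\sigma$ term, then concentration), and you yourself note at the end that ``in practice one simply quotes this lemma'' --- which is exactly what the paper does. So there is nothing to compare: your outline is correct and matches the source, while the paper simply cites the result.
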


\begin{lemma} \label{M22 lemma}
Consider a planted subgraph problem instance in which the nodes of $\obs$ and $\plant$ are labeled so that the leading principal minor of $A_\obs$ of order $k$ is equal to $A_\plant$, and the remaining edges in $\obs$ are drawn according to the Erd\H{o}s-R\'{e}nyi model with probability $p \in [0,1)$. For constants $c_1 = \sqrt{\frac{9p}{1-p}}$ and $c_2$ depending only on $p$ and for $\alpha \geq c_1 \sqrt{n-k}$, there exists $M_{22} \in \Sym^{n-k}$ satisfying
\begin{enumerate}
\item $(M_{22})_{i,j} = 1 \text{ if } (\Ag)_{i+k,j+k}=1 \text{ or } i=j$,
\item $\|M_{22}\|_2 < \alpha$,
\end{enumerate}
with probability at least $1- (n-k) \exp\Big( -c_2\, \big(\alpha-c_1\sqrt{n-k}\big)^2 \Big)$.
\end{lemma}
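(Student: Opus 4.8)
The plan is to exhibit an explicit candidate $M_{22}$ that depends only on the random edges of $\obs$ among the vertices $\{k+1,\dots,n\}$, and then to control its spectral norm by a matrix Bernstein-type inequality (Lemma \ref{Afonso Lemma}).

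First I would let $B \in \Sym^{n-k}$ denote the submatrix of $\Ag$ indexed by $\{k+1,\dots,n\}$; under the Erd\H{o}s--R\'enyi model its strictly-upper-triangular entries are i.i.d.\ $\mathrm{Bernoulli}(p)$. Define $M_{22}$ by $(M_{22})_{i,i} = 1$, $(M_{22})_{i,j} = 1$ whenever $B_{i,j} = 1$, and $(M_{22})_{i,j} = -p/(1-p)$ whenever $B_{i,j} = 0$ and $i \neq j$. Condition $1$ of the lemma then holds by construction. The point of the fill-in value $-p/(1-p)$ is that it centres the off-diagonal entries: $\mathbb{E}[(M_{22})_{i,j}] = p\cdot 1 + (1-p)\bigl(-p/(1-p)\bigr) = 0$ for $i \neq j$. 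Writing $X := M_{22} - I_{n-k}$, the matrix $X$ is symmetric with zero diagonal and with independent, centered off-diagonal entries satisfying $|X_{i,j}| \leq \tilde\sigma_* := \max\{1,\,p/(1-p)\}$ and $\mathbb{E}[X_{i,j}^2] = p/(1-p)$; both $\tilde\sigma_*$ and this variance depend only on $p$.

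Next I would apply Lemma \ref{Afonso Lemma} to $X$ with $d = n-k$ and $\epsilon = 1/2$. Here $\tilde\sigma = \max_i \sqrt{\sum_j \mathbb{E}[X_{i,j}^2]} = \sqrt{(n-k-1)\,p/(1-p)} \leq \sqrt{(n-k)\,p/(1-p)}$, so $(1+\epsilon)2\tilde\sigma = 3\tilde\sigma \leq \sqrt{9p/(1-p)}\,\sqrt{n-k} = c_1\sqrt{n-k}$, which identifies the constant $c_1$ in the statement. The lemma then gives, for all $x \geq 0$, $\mathbb{P}\bigl(\|X\|_2 \geq c_1\sqrt{n-k} + x\bigr) \leq (n-k)\exp\bigl(-x^2/(\tilde c_{1/2}\tilde\sigma_*^2)\bigr)$. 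Since $M_{22} = X + I_{n-k}$ we have $\|M_{22}\|_2 \leq \|X\|_2 + 1$, so choosing $x$ with $c_1\sqrt{n-k} + x + 1 = \alpha$, i.e.\ $x = \alpha - c_1\sqrt{n-k} - 1$, yields $\mathbb{P}\bigl(\|M_{22}\|_2 \geq \alpha\bigr) \leq (n-k)\exp\bigl(-(\alpha - c_1\sqrt{n-k} - 1)^2/(\tilde c_{1/2}\tilde\sigma_*^2)\bigr)$. Finally, I would absorb the additive $-1$ and the factor $\tilde c_{1/2}\tilde\sigma_*^2$ into a single constant $c_2 = c_2(p)$, shrinking $c_2$ if necessary so that the asserted bound is vacuously at least $1$ in the regime $\alpha - c_1\sqrt{n-k} = O(1)$ (where $x$ could be negative and the lemma does not directly apply), to obtain the stated success probability $1 - (n-k)\exp\bigl(-c_2(\alpha - c_1\sqrt{n-k})^2\bigr)$.

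I expect the substantive (but short) content to be two things: the design choice of the fill-in value $-p/(1-p)$, which is what renders the random part of $M_{22}$ centered and bounded so that Lemma \ref{Afonso Lemma} applies verbatim; and the bookkeeping at the end, namely absorbing the deterministic shift $I_{n-k}$ (costing an additive $1$ in the spectral norm) and the small-$\alpha$ regime into $c_2$ while preserving the exact form of the bound. Everything else is a routine computation of the row variances and the operator-norm triangle inequality, so I do not anticipate a genuine obstacle beyond this.
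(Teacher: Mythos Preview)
Your proposal is correct and follows essentially the same route as the paper: the same centering fill-in value $-p/(1-p)$ for the non-edge entries and the same application of Lemma~\ref{Afonso Lemma} with $\epsilon=1/2$ to identify $c_1=\sqrt{9p/(1-p)}$. The only cosmetic difference is that the paper's construction takes the diagonal of $M_{22}$ to be zero, so Lemma~\ref{Afonso Lemma} is applied to $M_{22}$ itself rather than to $M_{22}-I_{n-k}$, which spares the identity shift and the ensuing $-1$ bookkeeping you absorb into $c_2$.
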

\begin{proof}
Our proof is inspired by the approach in \cite{ames2011nuclear}. Consider the following matrix $M_{22} \in \Sym^{n-k}$:
\begin{align}
(M_{22})_{i,j} = \begin{cases} 1, \text{ if } (\Ag)_{i+k,j+k}=1, i\neq j \\
						 	 \frac{-p}{1-p}, \text{ if } (\Ag)_{i+k,j+k}=0, i\neq j \\
						      0, \text{ if } i = j.
				\end{cases}
\end{align}
As the submatrix $(A_\obs)_{i+k,j+k}, ~i,j=1,\dots,n-k$ consists of independent and centered entries (in the off-diagonal locations) and zeros on the diagonal, one can check that $M_{22}$ is a random matrix that satisfies the requirements of Lemma $\ref{Afonso Lemma}$.  Further, the first part of the present lemma is satisfied.  The second claim follows from an application of Lemma $\ref{Afonso Lemma}$ with $\epsilon = 1/2$.
\end{proof}

Combining Proposition \ref{Optimality Conditions}, Proposition \ref{M12 Proposition}, and Lemma \ref{M22 lemma}, we now state and prove the main result of this paper:
\begin{theorem} \label{Main Theorem}
Consider a planted subgraph problem instance in which the nodes of $\obs$ and $\plant$ are labeled so that the leading principal minor of $A_\obs$ of order $k$ is equal to $A_\plant$, and the remaining edges in $\obs$ are drawn according to the Erd\H{o}s-R\'{e}nyi model.  Suppose $\eig \subset \R^k$ is an eigenspace of $A_\plant$ with associated eigenvalue $\lambda_\eig$, and we employ the Schur-Horn relaxation \eqref{PrimalOptimization} with parameter $\gamma = \lambda_\eig$. Further suppose that:
\begin{enumerate}
\item $p \in [0,\frac{\krank{\eig}}{k})$,
\end{enumerate}
and for some $\ell \in \mathbb{Z}$ satisfying $kp < \ell  \leq \krank{\eig} $,
\begin{enumerate}
\setcounter{enumi}{1}
\item $n < \min\big(  \frac{\egap(A_\plant,\eig)^2}{4  \omega(\eig,\ell,p)} , \frac{( \egap(A_\plant,\eig) - 2|\lambda_\eig| )^2}{4 c_1^2}  \big) +k $.
\end{enumerate}
Then the Schur-Horn relaxation succeeds at identifying the planted subgraph $\plant$ inside $\obs$ with probability at least $1- p_1 - p_2 $, where: \newline $p_1 = \allowbreak 1 - \Bigg[\big(1- \exp{(-\tfrac{(\ell-kp)^2}{\ell + kp})}\big)^{n-k} \times \allowbreak \Big(1-2k \exp{\Big(-\tfrac{3 \zeta  \big(\tfrac{1}{4}\mathrm{eigengap}(A_\plant, \eig )^2 -(n-k) \omega(\eig,\ell,p)\big)^2}  {4 \ell\big(\tfrac{1}{4}{\mathrm{eigengap}(A_\plant, \eig )}^2 + 2(n-k) \omega(\eig,\ell,p)\big)}\Big) }\Big) \Bigg]$ and $p_2 = (n-k) \times \allowbreak \exp\Big( -c_2 \allowbreak \big( \tfrac{1}{2}\mathrm{eigengap(A_\Gamma,\eig)}-|\lambda_\eig|-c_1\sqrt{n-k} \big)^2 \Big)$.
Here $\zeta = \min\limits_{\substack{\Omega \subset \{1,\dots,k\}  \\ |\Omega| \leq \ell}} \lambda_{\mathrm{min}} \big((\proj_{\eig})_{\Omega,\Omega}\big)$, and the constants $c_1 = \sqrt{\frac{9p}{1-p}}$ and $c_2$ depend only on $p$.
\end{theorem}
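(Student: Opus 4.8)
The plan is to produce an explicit dual certificate $M = \begin{pmatrix} M_{11} & M_{12} \\ M_{12}' & M_{22}\end{pmatrix} \in \Sym^n$ satisfying the five hypotheses of Proposition~\ref{Optimality Conditions} with $\gamma = \lambda_\eig$, and then to read off the stated success probability from a union bound over the two independent random events that govern $M_{12}$ and $M_{22}$.

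\textbf{Construction of the blocks.} For the leading block I would simply take $M_{11} = A_\plant$. This agrees with $(\Ag)_{i,j}$ on every edge of $\plant$ and on the diagonal, so condition~$(i)$ holds on the $k\times k$ block; since the distinct eigenspaces of $A_\plant$ carry distinct eigenvalues, $A_\plant$ is strictly spectrally comonotone with itself by Lemma~\ref{Lemma strictly spectrally comonotone}, which is condition~$(ii)$; and $\lambda_{\max}(A_\plant|_\eig) = \lambda_{\min}(A_\plant|_\eig) = \lambda_\eig$ gives condition~$(iii)$ with equality. The essential payoff is that $\egap(M_{11},\eig) = \egap(A_\plant,\eig)$, so the eigengap appearing in condition~$(v)$ is exactly the graph quantity in the theorem. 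Next I would invoke Proposition~\ref{M12 Proposition} with threshold $\delta = \tfrac{1}{2}\,\egap(A_\plant,\eig)$: hypothesis~(1), $p \in [0,\krank{\eig}/k)$, together with the first half of hypothesis~(2), namely $n - k < \egap(A_\plant,\eig)^2/\big(4\,\omega(\eig,\ell,p)\big)$, are precisely what is needed to ensure $\delta \geq \sqrt{(n-k)\,\omega(\eig,\ell,p)}$, yielding (with probability at least the corresponding factor of $1-p_1$) an $M_{12}$ whose columns lie in $\eig$, which agrees with $\Ag$ on the relevant edges (conditions~$(i)$ and $(iv)$), and with $\norm{M_{12}}_2 < \tfrac{1}{2}\,\egap(A_\plant,\eig)$. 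Independently I would invoke Lemma~\ref{M22 lemma} with $\alpha = \tfrac{1}{2}\,\egap(A_\plant,\eig) - |\lambda_\eig|$: the second half of hypothesis~(2), $n - k < \big(\egap(A_\plant,\eig) - 2|\lambda_\eig|\big)^2/(4c_1^2)$, is exactly the requirement $\alpha \geq c_1\sqrt{n-k}$ (and implicitly forces $\egap(A_\plant,\eig) > 2|\lambda_\eig|$), producing an $M_{22}$ that agrees with $\Ag$ on edges, vanishes on the diagonal, and satisfies $\norm{M_{22}}_2 < \alpha$.

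\textbf{Verifying $(v)$ and concluding.} On the intersection of these two events, condition~$(v)$ follows from the chosen budget:
\[
\norm{M_{12}}_2 + \norm{M_{22}}_2 + |\lambda_\eig| < \tfrac{1}{2}\egap(A_\plant,\eig) + \big(\tfrac{1}{2}\egap(A_\plant,\eig) - |\lambda_\eig|\big) + |\lambda_\eig| = \egap(A_\plant,\eig) = \egap(M_{11},\eig).
\]
Hence $M$ satisfies every hypothesis of Proposition~\ref{Optimality Conditions}, so the Schur-Horn relaxation with $\gamma = \lambda_\eig$ recovers $\plant$. Finally, the entries of $M_{12}$ depend only on the edges of $\obs$ joining $\plant$ to the remaining $n-k$ vertices, while those of $M_{22}$ depend only on the edges among those $n-k$ vertices; under the Erd\H{o}s--R\'enyi model these are disjoint, hence independent, collections of random edges, so the probability of the intersection is the product of the two bounds from Proposition~\ref{M12 Proposition} and Lemma~\ref{M22 lemma}, which is at least $1 - p_1 - p_2$ by the union bound; substituting $\delta = \tfrac{1}{2}\egap(A_\plant,\eig)$ and $\alpha = \tfrac{1}{2}\egap(A_\plant,\eig) - |\lambda_\eig|$ into those bounds gives exactly the displayed expressions for $p_1$ and $p_2$.

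\textbf{Main obstacle.} Essentially all of the analytic work lives upstream, in Propositions~\ref{Optimality Conditions} and~\ref{M12 Proposition} and in Lemma~\ref{M22 lemma}; the proof of Theorem~\ref{Main Theorem} itself is bookkeeping, and the one point that genuinely requires care is how to allocate the total slack $\egap(A_\plant,\eig)$ among the three terms $\norm{M_{12}}_2$, $\norm{M_{22}}_2$, and $|\lambda_\eig|$ in condition~$(v)$. The $|\lambda_\eig|$ penalty must be charged against the $M_{22}$ budget---which is why the factor $\big(\egap(A_\plant,\eig) - 2|\lambda_\eig|\big)^2$ appears in hypothesis~(2)---rather than against the $M_{12}$ budget, because $\norm{M_{12}}_2$ is controlled through the combinatorial width $\omega(\eig,\ell,p)$ whereas $\norm{M_{22}}_2$ is controlled through the Erd\H{o}s--R\'enyi density via the constant $c_1$; making this split the way we have done is what yields the two threshold conditions on $n$ exactly as stated.
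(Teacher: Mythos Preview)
Your proposal is correct and follows essentially the same route as the paper: set $M_{11}=A_\plant$, invoke Proposition~\ref{M12 Proposition} with $\delta=\tfrac{1}{2}\egap(A_\plant,\eig)$ and Lemma~\ref{M22 lemma} with $\alpha=\tfrac{1}{2}\egap(A_\plant,\eig)-|\lambda_\eig|$, then apply the union bound. Your write-up is in fact slightly more explicit than the paper's, spelling out the verification of condition~$(v)$ and the rationale for how the eigengap budget is split between the $M_{12}$ and $M_{22}$ blocks.
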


\begin{proof}
As discussed previously, since $\ell \leq \krank{\eig}$ we have that $\zeta > 0$.  We establish the result by constructing a dual certificate $M$ satisfying the conditions of Proposition $\ref{Optimality Conditions}$.

We start by setting $M_{11} = A_\plant$.  This ensures that conditions $(\ref{Optimality Condition M11Comonotone})$ and $(\ref{Optimality Condition M11 restricted})$ of Proposition $\ref{Optimality Conditions}$ are immediately satisfied.  Next, we choose $M_{12}$ as discussed in Proposition $\ref{M12 Proposition}$, with the parameter $\delta = \frac{1}{2}\,\mathrm{eigengap}(\plant,\eig)$, which satisfies $\delta \geq \sqrt{ (n-k) \omega(\eig,\ell,p)}$ due to the upper bound on $n$. Such an $M_{12}$ exists with probability at least $1-p_1$, and it satisfies condition $(\ref{Optimality Condition M12Columns})$ of Proposition $\ref{Optimality Conditions}$ as well as the bound $\|M_{12}\|_2 < \frac{1}{2} \mathrm{eigengap}(A_\plant,\eig)$. Finally, we set $M_{22}$ as discussed in Lemma $\ref{M22 lemma}$, with $\alpha =\frac{1}{2} \mathrm{eigengap}(A_\plant,\eig) - |\lambda_\eig|$, which satisfies $\alpha \geq c_1 \sqrt{n-k}$ due to the upper bound on $n$.  Such an $M_{22}$ exists with probability at least $1-p_2$ and satisfies the bound $\|M_{22}\|_2 < \frac{1}{2}\mathrm{eigengap}(A_\plant,\eig) -|\lambda_\eig|$.

Based on this construction, the matrix $M=\begin{pmatrix}
M_{11}&M_{12}\\M_{12}'&M_{22}
\end{pmatrix}$ satisfies conditions $(\ref{Optimality Condition E constraint})$ and $(\ref{Optimality Condition Eigengap})$ of Proposition $\ref{Optimality Conditions}$.  Thus, if $M_{12}$ and $M_{22}$ with the stated properties exist, then all the conditions of Proposition $\ref{Optimality Conditions}$ are satisfied.  By the union bound, the desired $M_{12}$ and $M_{22}$ exist concurrently with probability at least $1-p_1-p_2$.
\end{proof}

%\item For some $\zeta\in\R$, $\ell\in\mathbb{N}$ satisfying $kp\,\leq\,\ell  \leq  \krank{\eig} $ and for any set $\Omega\subset \{1,\dots,k\}$ such that $|\Omega| \,\leq\,\ell$; the inequality $ 0 < \zeta \leq \lambda_{\mathrm{min}}\big((\proj_{\eig})_{\Omega,\,\Omega}\big)$ holds, \label{MainTheoremPomegaCondition}
%\item $\mathrm{eigengap}(\plant, \eig )$ $>$ $\max\{2\sqrt{(n-k)\omega(\eig,\,\ell)},~ 2|\gamma| + 2 c_1\sqrt{n-k} \}$,
%\end{enumerate}

%\begin{remark}
%In the proof of Theorem \ref{Main Theorem}, we set the upper bounds on the norms $\norm{M_{12}}$ and $\norm{M_{22}}$ to be equal to $\frac{1}{2}\,\mathrm{eigengap}(\plant,\eig)$ and $\frac{1}{2}\,\mathrm{eigengap}(\plant,\eig) - |\gamma|$, respectively. This choice is clearly not unique, and it was made to reduce the notational clutter in probabilities $p_1$ and $p_2$ as much as possible.
%\end{remark}

\begin{remark} \label{remark l}
The parameter $\ell$ arises in multiple aspects of this result.  We discuss specific choices of $\ell$ in the corollaries in the next section.
\end{remark}

Theorem \ref{Main Theorem} provides a non-asymptotic bound on the performance of the Schur-Horn relaxation (\ref{PrimalOptimization}).  In words, this relaxation succeeds with high probability in identifying a subgraph $\plant$ planted inside a larger graph $\obs$ (under the Erd\H{o}s-R\'{e}nyi model) provided $A_\plant$ has an eigenspace $\eig$ satisfying four conditions: $(i)$ The eigenspace $\eig$ has large Kruskal rank, $(ii)$ The eigenspace $\eig$ has small combinatorial width, $(iii)$ $A_\plant$ has a large eigengap with respect to $\eig$, and $(iv)$ The projection matrix $\proj_\eig$ has the property that all sufficiently large principal minors are well-conditioned.  In practice, larger dimensional eigenspaces of $A_\plant$ may be expected to satisfy these conditions more easily, and therefore we set $\gamma$ equal to the eigenvalue of $A_\plant$ of largest multiplicity in our experimental demonstrations in Section \ref{SectionNumerical}.

%As discussed in the next section, these conditions can be simplified for certain graph families by appealing to the discussion in Section[REF].  In practice, it is natural to expect In particular, a graph with few distinct and well separated eigenvalues is more likely to satisfy these conditions, as high multiplicity in an eigenvalue translates to existence of eigenspaces with higher dimension, which in turn implies that the eigenspace is ``likelier'' to have a high Kruskal rank.
%That is due to the fact that if the eigenspace has a higher dimension, then its orthogonal complement has a lower dimension, hence the sparsest vector it contains is prone to be not very sparse.

%, since the complements of those eigenspaces will be lower dimensional.

%Both in finite and asymptotical settings, Theorem \ref{Main Theorem} gives nontrivial probability bounds for perfect recovery of the planted graph using the Schur-Horn relaxation. Specifically, in the asymptotical regime, our result recovers the probability bound presented by Ames and Vavasis \cite{ames2011nuclear}. We defer the details to Corollary \ref{Corollary Clique}.

\subsection{Specializations of Theorem \ref{Main Theorem}} \label{Subsection Simplifications}
We appeal to the discussion in Section \ref{properties} on the properties of the Kruskal rank and the combinatorial width to obtain specializations of Theorem \ref{Main Theorem} to certain graph families.  We begin by considering the case of symmetric planted graphs with symmetric complements:

\begin{corollary} \label{Main Corollary}
Consider a planted subgraph problem instance in which the nodes of $\obs$ and $\plant$ are labeled so that the leading principal minor of $A_\obs$ of order $k$ is equal to $A_\plant$, and the remaining edges in $\obs$ are drawn according to the Erd\H{o}s-R\'{e}nyi model.  Suppose $\eig \subset \R^k$ is an eigenspace of $A_\plant$ with associated eigenvalue $\lambda_\eig$, and we employ the Schur-Horn relaxation \eqref{PrimalOptimization} with parameter $\gamma = \lambda_\eig$.  Further suppose that the following three conditions hold:
\begin{enumerate}
\item $\Gamma$ is a connected symmetric graph with a symmetric complement, \label{Corollary req Symmetric}
\item $p \in [0,\frac{1}{\mu(\eig) k})$, \label{Corollary req p interval}
%\end{enumerate}
%and for some $\ell \in \mathbb{N}$ satisfying $kp\,\leq\,\ell  \leq \frac{1}{\mu(\eig)} $,
%\begin{enumerate}
%\setcounter{enumi}{2}
\item $n < \min\Big(\frac{\mathrm{eigengap}(A_\plant,\eig)^2\, \dim(\eig)^2 \,  \big(1-kp\mu(\eig)\big)}{16k^2p}, \frac{(\egap(A_\plant,\eig) - 2|\lambda_\eig|  )^2}{4 c_1^2}\Big) +k.$ \label{Corollary requirement n bound}
\end{enumerate}
Then the Schur-Horn relaxation succeeds in identifying the planted subgraph $\plant$ inside the larger graph $\obs$ with probability at least $1-p_1 - p_2 $, where $p_1$ and $p_2$ are as stated in Theorem \ref{Main Theorem} (one can substitute $\frac{4 k^2 p}{\dim(\eig)^2 \big(1-kp\mu(\eig)\big)}$ for the $\omega(\eig,\ell,p)$ term appearing in $p_1$).
\end{corollary}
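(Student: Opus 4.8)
The plan is to obtain Corollary~\ref{Main Corollary} as a direct specialization of Theorem~\ref{Main Theorem}, using the extra structure of symmetric graphs (hypothesis~(\ref{Corollary req Symmetric})) to replace the abstract quantities $\krank{\eig}$ and $\omega(\eig,\ell,p)$ by the explicit coherence-based bounds of Section~\ref{properties}, and then picking the free parameter $\ell$ of Theorem~\ref{Main Theorem} explicitly. As a first step I would invoke Proposition~\ref{Prob krank bound} to get $\krank{\eig}\ge 1/\mu(\eig)$; hypothesis~(\ref{Corollary req p interval}), namely $p<\tfrac{1}{\mu(\eig)k}$, then implies $p<\tfrac{\krank{\eig}}{k}$, which is the first hypothesis of Theorem~\ref{Main Theorem}. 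This hypothesis also records the inequality $kp<\tfrac{1}{\mu(\eig)}$, which is used repeatedly below.

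The crux of the argument is the choice $\ell := \big\lfloor \tfrac{1}{2\mu(\eig)} + \tfrac{kp}{2}\big\rfloor + 1$. Using $kp<\tfrac{1}{\mu(\eig)}$, one checks that this integer satisfies $kp<\ell$ and $\ell-1\le \tfrac{1}{2\mu(\eig)}+\tfrac{kp}{2}<\tfrac{1}{\mu(\eig)}$, hence $\ell<\tfrac{1}{\mu(\eig)}+1\le \krank{\eig}+1$; since $\ell$ and $\krank{\eig}$ are integers this gives $kp<\ell\le\krank{\eig}$ (as required to apply Theorem~\ref{Main Theorem}) and also $\ell<\tfrac{1}{\mu(\eig)}+1$ (as required to apply Proposition~\ref{Prop lambda0 bound}). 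Because $\plant$ is connected, symmetric, and has a symmetric complement, Proposition~\ref{Prop lambda0 bound} gives $\zeta\ge \tfrac{\dim(\eig)}{k}\big(1-(\ell-1)\mu(\eig)\big)$ and Proposition~\ref{Prop CombWidth Upperbound} gives $\omega(\eig,\ell,p)\le \tfrac{2kp}{\zeta\dim(\eig)}\le \tfrac{2k^2p}{\dim(\eig)^2\,(1-(\ell-1)\mu(\eig))}$. The defining bound $\ell-1\le \tfrac{1}{2\mu(\eig)}+\tfrac{kp}{2}$ rearranges to $1-(\ell-1)\mu(\eig)\ge \tfrac12\big(1-kp\mu(\eig)\big)$, so $\omega(\eig,\ell,p)\le \tfrac{4k^2p}{\dim(\eig)^2\,(1-kp\mu(\eig))}$.

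With this estimate in hand, $\tfrac{\egap(A_\plant,\eig)^2}{4\,\omega(\eig,\ell,p)}\ge \tfrac{\egap(A_\plant,\eig)^2\,\dim(\eig)^2\,(1-kp\mu(\eig))}{16k^2p}$, while the term $\tfrac{(\egap(A_\plant,\eig)-2|\lambda_\eig|)^2}{4c_1^2}$ is unchanged; hence hypothesis~(\ref{Corollary requirement n bound}) of the corollary implies the second hypothesis of Theorem~\ref{Main Theorem}. Applying Theorem~\ref{Main Theorem} then yields that the Schur-Horn relaxation succeeds with probability at least $1-p_1-p_2$. Finally, on the range of $n$ permitted by hypothesis~(\ref{Corollary requirement n bound}) one has $\tfrac14\egap(A_\plant,\eig)^2>(n-k)\,\omega(\eig,\ell,p)$, and in that regime the exponent appearing in $p_1$ is monotone decreasing in $\omega(\eig,\ell,p)$; consequently replacing $\omega(\eig,\ell,p)$ in $p_1$ by the larger quantity $\tfrac{4k^2p}{\dim(\eig)^2(1-kp\mu(\eig))}$ only enlarges $p_1$, giving the substitution asserted in the corollary's statement.

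The only genuinely delicate point is the arithmetic surrounding $\ell$: one must verify that the interval $\big(kp,\ \tfrac{1}{2\mu(\eig)}+\tfrac{kp}{2}+1\big]$ contains an integer — which holds because its length exceeds $1$ precisely when $kp<\tfrac{1}{\mu(\eig)}$ — and that this same $\ell$ simultaneously satisfies $kp<\ell\le\krank{\eig}$ and $\ell<\tfrac{1}{\mu(\eig)}+1$, so that Propositions~\ref{Prob krank bound}, \ref{Prop lambda0 bound}, and \ref{Prop CombWidth Upperbound} may all be invoked with it. Everything else — propagating the factor $2$ of Proposition~\ref{Prop CombWidth Upperbound} into the factor $4$ of the corollary, and the monotonicity of $p_1$ in $\omega$ — is routine bookkeeping.
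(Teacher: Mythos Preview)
Your proof is correct and follows essentially the same route as the paper: the paper sets $\ell = \lceil \tfrac{1}{2}(kp + \tfrac{1}{\mu(\eig)})\rceil$ (which agrees with your $\lfloor\cdot\rfloor+1$ except on integers), verifies $kp<\ell\le\krank{\eig}$ and $\ell<\tfrac{1}{\mu(\eig)}+1$, and then invokes Propositions~\ref{Prob krank bound}, \ref{Prop lambda0 bound}, and \ref{Prop CombWidth Upperbound} exactly as you do before appealing to Theorem~\ref{Main Theorem}. Your treatment is in fact slightly more careful than the paper's in justifying the integer arithmetic around $\ell$ and in noting explicitly the monotonicity of $p_1$ in $\omega$ that legitimizes the substitution.
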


\begin{proof}
This result follows by a combination of Theorem \ref{Main Theorem},  and Propositions $\ref{Prop CombWidth Upperbound}$, $\ref{Prob krank bound}$, $\ref{Prop lambda0 bound}$.  Set $\ell = \lceil\frac{1}{2}(kp +\frac{1}{\mu(\eig)}) \rceil$.  This choice satisfies $ kp < \ell \leq  \krank{\eig}$ based on Proposition \ref{Prob krank bound}.  One can also check that the inequality $\ell < \frac{1}{\mu(\eig)} + 1$ holds.  The vertex transitivity of $\plant$ implies that one can appeal to Proposition \ref{Prop lambda0 bound} to conclude that
\begin{equation} \label{lowerboundonzeta}
\min\limits_{\substack{\Omega \subset \{1,\dots,k\}  \\ |\Omega| \leq \ell}} \lambda_{\mathrm{min}} \big((\proj_{\eig})_{\Omega,\,\Omega}\big)\geq \frac{\dim(\eig)}{k} \big(1-(\ell -1)\mu(\eig)\big) > \frac{\dim(\eig)}{2k} (1-kp\mu(\eig)).
\end{equation}
Based on the condition on $p$, this lower bound is strictly positive.  As $\plant$ is symmetric and has a symmetric complement (and is connected), we conclude from Proposition \ref{Prop CombWidth Upperbound} that $\omega(\eig,\ell,p) \leq \frac{4 k^2 p}{\dim(\eig)^2 (1-kp\mu(\eig))}$.

Finally, one can check that conditions (\ref{Corollary req p interval}) and (\ref{Corollary requirement n bound}) of the corollary imply that both of the requirements of Theorem \ref{Main Theorem} are met, and hence the Schur-Horn relaxation succeeds in identifying the planted subgraph $\plant$ with probability at least $1-p_1-p_2$, where $p_1$ and $p_2$ are as stated in Theorem \ref{Main Theorem} -- one can substitute $\frac{4 k^2 p}{\dim(\eig)^2 \big(1-kp\mu(\eig)\big)}$ as an upper bound for $\omega(\eig,\ell,p)$ and $\frac{\dim(\eig)}{2k} (1-kp\mu(\eig))$ as a lower bound for $\zeta$, which yields a lower bound on $1-p_1-p_2$ from Theorem \ref{Main Theorem}.
\end{proof}

As the coherence parameter of an eigenspace is more tractable to compute than the Kruskal rank, this result provides an efficiently verifiable set of conditions that guarantee the success of the Schur-Horn relaxation \eqref{PrimalOptimization} for symmetric planted graphs $\plant$.  This result specialized to the case of the planted clique problem yields the result of Ames and Vavasis \cite{ames2011nuclear}.

\begin{corollary} \label{Corollary Clique}
Fix $p \in [0,1)$ and consider a family of planted clique problem instances $\{\plant_k, \obs_k\}_{k=1}^\infty$ generated according to the Erd\H{o}s-R\'{e}nyi model, where $\plant_k$ is the $k$-clique and $\obs_k$ is a graph on $n_k$ nodes.   There exists a constant $\beta > 0$ only depending on $p$ such that if $n_k \leq \beta k^2$, the Schur-Horn relaxation with $\gamma = -1$ succeeds in identifying $\plant_k$ inside $\obs_k$ with probability approaching one exponentially fast in $k$.
\end{corollary}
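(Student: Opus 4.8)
The plan is to derive Corollary \ref{Corollary Clique} as a direct specialization of Corollary \ref{Main Corollary}. First I would record the relevant spectral data of the $k$-clique: the adjacency matrix $A_{\plant_k} = 1_k 1_k' - I_k$ has eigenvalue $k-1$ with multiplicity one (eigenvector $1_k$) and eigenvalue $-1$ with multiplicity $k-1$. We take $\eig$ to be the $(k-1)$-dimensional eigenspace associated to $\lambda_\eig = -1$, so $\proj_\eig = I_k - \tfrac{1}{k}1_k 1_k'$. The clique is connected, vertex- and edge-transitive, and its complement (the empty graph on $k$ vertices) is vacuously edge-transitive as well, so hypothesis $(\ref{Corollary req Symmetric})$ of Corollary \ref{Main Corollary} is met. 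Choosing $\gamma = \lambda_\eig = -1$ matches the statement.

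Next I would compute the invariants appearing in Corollary \ref{Main Corollary} for this $\eig$. The diagonal entries of $\proj_\eig$ equal $\tfrac{k-1}{k}$ and the off-diagonal entries equal $-\tfrac{1}{k}$, so the coherence is $\mu(\eig) = \tfrac{1/k}{(k-1)/k} = \tfrac{1}{k-1}$. Hence the interval condition $(\ref{Corollary req p interval})$ reads $p \in [0, \tfrac{k-1}{k})$, which holds for all fixed $p \in [0,1)$ once $k$ is large enough (indeed for all $k \geq 1/(1-p)$, say). The eigengap is $\egap(A_{\plant_k},\eig) = |(k-1) - (-1)| = k$, and $|\lambda_\eig| = 1$. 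Substituting $\dim(\eig) = k-1$, $\mu(\eig) = \tfrac{1}{k-1}$, and $\egap = k$ into the bound $(\ref{Corollary requirement n bound})$ of Corollary \ref{Main Corollary} gives, for the first term, $\tfrac{k^2 (k-1)^2 (1 - kp/(k-1))}{16 k^2 p} = \tfrac{(k-1)^2 (1 - kp/(k-1))}{16 p} = \tfrac{(k-1)(k-1-kp)}{16p}$, which is $\Theta(k^2)$ in $k$ for fixed $p$ (the factor $k-1-kp = (1-p)k - 1$ grows linearly); and for the second term, $\tfrac{(k-2)^2}{4 c_1^2}$ with $c_1 = \sqrt{9p/(1-p)}$, also $\Theta(k^2)$. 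Therefore there is a constant $\beta > 0$ depending only on $p$ such that $n_k \leq \beta k^2$ (plus the additive $+k$, absorbed into $\beta$ for large $k$) implies condition $(\ref{Corollary requirement n bound})$ for all sufficiently large $k$.

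It then remains to argue that the failure probability $p_1 + p_2$ from Corollary \ref{Main Corollary} decays exponentially in $k$ under the scaling $n_k \leq \beta k^2$. I would use the substitutions flagged in Corollary \ref{Main Corollary}: replace $\omega(\eig,\ell,p)$ by $\tfrac{4k^2 p}{\dim(\eig)^2(1-kp\mu(\eig))} = \tfrac{4k^2 p}{(k-1)(k-1-kp)} = \Theta(1)$, and replace $\zeta$ by its lower bound $\tfrac{\dim(\eig)}{2k}(1-kp\mu(\eig)) = \tfrac{k-1-kp}{2k} = \Theta(1)$. In $p_2 = (n_k-k)\exp(-c_2(\tfrac{1}{2}\egap - |\lambda_\eig| - c_1\sqrt{n_k-k})^2)$, we have $\tfrac12\egap - |\lambda_\eig| = \tfrac{k}{2} - 1$ while $c_1\sqrt{n_k - k} \leq c_1\sqrt{\beta}\, k$; choosing $\beta$ small enough that $c_1\sqrt{\beta} < \tfrac12$ makes the bracketed quantity at least $c\, k$ for a constant $c>0$, so $p_2 \leq \beta k^2 \exp(-c_2 c^2 k^2)$, which decays exponentially (indeed super-exponentially) in $k$. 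Similarly in $p_1$, the exponent in the Bernstein-type factor behaves like $-\Theta(\tfrac{\zeta(\egap^2/4 - \Theta(k^2))^2}{\ell \cdot \Theta(k^2)})$; since $\egap^2/4 = k^2/4$ and $(n_k-k)\omega \leq \Theta(k^2)$ with the implied constant made $<1/4$ by shrinking $\beta$, the numerator is $\Theta(k^4)$ and with $\ell \leq k$ the exponent is $-\Theta(k)$, so the $2k\exp(-\Theta(k))$ term vanishes exponentially; the $(1 - \exp(-(\ell-kp)^2/(\ell+kp)))^{n_k-k}$ factor tends to one since $(\ell - kp)^2/(\ell+kp) = \Theta(k)$ forces $\exp(-\Theta(k)) \cdot (n_k - k) \to 0$, hence that factor is $1 - o(1)$ and in fact $\geq 1 - \beta k^2 \exp(-\Theta(k))$. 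Combining, $1 - p_1 - p_2 \to 1$ exponentially fast in $k$. The main obstacle is purely bookkeeping: one must pick $\beta$ small enough to simultaneously ($\mathrm{a}$) satisfy the two-sided bound $(\ref{Corollary requirement n bound})$, ($\mathrm{b}$) keep $(n_k - k)\omega(\eig,\ell,p) < \tfrac14\egap^2$ with a fixed margin so the squared term in $p_1$ stays $\Theta(k^4)$, and ($\mathrm{c}$) keep $c_1\sqrt{\beta} < \tfrac12$ so the $p_2$ exponent stays positive and $\Theta(k^2)$; once these constants are chosen consistently the exponential decay is immediate, and I would present the argument by tracking only the leading-order dependence on $k$ rather than optimizing constants.
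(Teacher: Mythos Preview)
Your proposal is correct and follows essentially the same route as the paper: both apply Corollary \ref{Main Corollary} directly, compute the clique's spectral invariants $\mu(\eig)=\tfrac{1}{k-1}$, $\dim(\eig)=k-1$, $\egap=k$, observe both terms in condition $(\ref{Corollary requirement n bound})$ are $\Theta(k^2)$, and then track the $\Theta(k)$ exponents in $p_1$ and $p_2$ after substituting the bounds $\omega=\Theta(1)$, $\zeta=\Theta(1)$, $\ell-kp=\Theta(k)$. The only cosmetic difference is that the paper writes down an explicit choice $n_k = \tfrac{k^2}{32}\min\big(\tfrac{1-kp\mu(\eig)}{2p},\tfrac{1}{c_1^2}\big)+k$ rather than arguing abstractly for the existence of $\beta$, but your bookkeeping conditions $(\mathrm{a})$--$(\mathrm{c})$ capture exactly the same constraints.
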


\begin{proof}
The $k$-clique is a connected symmetric graph with a complement that is also symmetric; hence the first condition of Corollary \ref{Main Corollary} is satisfied.  Each $A_{\plant_k} \in \Sym^k$ has a $(k-1)$-dimensional eigenspace $\eig$ such that $\mu(\eig) = \frac{1}{k-1}$, $\dim(\eig) = k-1$, and $\egap(A_\plant,\eig) = k$.

Based on the choice $\ell = \lceil \frac{1}{2}(kp + \frac{1}{\mu(\eig)}) \rceil$ as in Corollary (\ref{Main Corollary}), one can check that $\frac{\dim(\eig)}{2k}\big( 1-kp\mu(\eig)\big)  = \Theta(1)$, that $\omega(\eig,\ell,p) \leq \frac{4 k^2 p}{\dim(\eig)^2\,\big( 1-kp\mu(\eig)\big)}  = \Theta(1)$, and that $\ell-kp =  \Theta(k)$.

Set $n_k = \frac{k^2}{32}\min(  \frac{1-kp\mu(\eig)}{2p}, \frac{1}{{c_1}^2}) + k$. One can check that the third condition of Corollary \ref{Main Corollary} is satisfied with this choice.  Moreover, this value of $n_k$ (or any smaller value) yields $\frac{1}{4}\egap(\Ap,\eig)^2 - (n_k-k)\omega(\eig,\ell,p) = \Theta(k^2)$ and $\frac{1}{2}\egap(\Ap,\eig)-1-c_1\sqrt{n_k-k} = \Theta(k)$.

By Corollary \ref{Main Corollary}, we conclude that the Schur-Horn relaxation (\ref{PrimalOptimization}) with parameter $\gamma=-1$ identifies a hidden $k$-clique with probability $1-p_1-p_2$, where
\begin{align*}
p_1 &= 1- \big(1-\exp(-c_4 k)\big)^{n-k} \big(1-2k\exp(-c_3 k)\big) \longrightarrow 0,\text{ as } n,~k \to \infty, \text{ and}\\
p_2 &= (n-k) \exp(-c_5 k^2) \longrightarrow 0,\text{ as } n,~k \to \infty,
\end{align*}
for some constants $c_3>0$, $c_4>0$, and $c_5>0$.
\end{proof}

Thus, although our main result Theorem \ref{Main Theorem} was developed in a non-asymptotic setting, it can be specialized to obtain the asymptotic result presented in \cite{ames2011nuclear}.

%The asymptotical implications of Corollary \ref{Main Corollary} is indeed much broader than the special case of the planted clique, in which the eigengap and the size of the graph coincide. In a more general setting, one can check the implications of Corollary \ref{Main Corollary} in the light of Propositions $\ref{Prop CombWidth Upperbound}$, $\ref{Prob krank bound}$ and $\ref{Prop lambda0 bound}$. %The flexibility of our result makes it applicable in a broad setting.

%For instance, Corollary \ref{Main Corollary} suggests that for a low enough value of $p$, the Schur-Horn Relaxation with a suitable choice of parameter $\gamma$ will perfectly recover a symmetric graph whose complement is also symmetric with probability asypmtotically approaching to 1, if $A_\plant$ has an eigenspace $\eig$ satisfying the following:
%\begin{enumerate}
%\item $\mu(\eig) = \Theta (1/k)$,
%\item $\dim(\eig) = \Theta (k)$,
%\item $|\gamma| = O\big(\egap(A_\plant,\eig)\big)$ and
%\item $n = O\big(\egap(A_\plant,\eig)^2\big)$.
%\end{enumerate}

\section{Numerical Experiments}\label{SectionNumerical}

%Computational considerations: Repeat that this program is solvable via semidefinite programming.  Give the more compact formulation for settings with few distinct eigenvalues.
%
%Give phase transitions of the various experiments.

\subsection{Semidefinite Descriptions of the Schur-Horn Orbitope} \label{subsec:sdpschurhorn}

%Specifically, for $M \in \Sym^n$ let $\lambda^\downarrow_i(M), ~ i=1,\dots,n$ denote the $i$'th largest eigenvalue of $M$.  Then the following description is an alternative characterization of the Schur-Horn orbitope $\sh(M)$ \cite{sanyal2011orbitopes}:
%\begin{equation} \label{SH majorization}
%\sh(M) = \left\{N \in \Sym^n ~|~ \sum_{i=1}^\ell \lambda^\downarrow_i(N) \leq \sum_{i=1}^\ell \lambda^\downarrow_i(M) ~\mathrm{for}~ \ell=1,\dots,n-1, ~\mathrm{and}~ \tr(N) = \tr(M) \right\}.
%\end{equation}
%Each of the convex functions $\sum_{i=1}^k \lambda^\downarrow_i(N), ~ k=1,\dots,n-1$ has a tractable semidefinite description \cite{ben2001lectures}, and consequently the set $\sh(M)$ has an efficient semidefinite programming specification.

We begin with a discussion of semidefinite representations of the Schur-Horn orbitope $\sh(M)$ for $M \in \mathbb{S}^n$.  Specifically, suppose $s_\ell: \Sym^n \rightarrow \R$ denotes the sum of the $\ell$-largest eigenvalues of a symmetric matrix for $\ell=1,\dots,n$.  Then the Schur-Horn orbitope $\sh(M)$ can be described via majorization inequalities on the spectrum \cite{sanyal2011orbitopes}:
\begin{equation} \label{SH majorization}
\sh(M) = \left\{N \in \Sym^n ~|~ s_\ell(N) \leq s_\ell(M) ~\mathrm{for}~ 1 \leq \ell \leq n-1, ~\mathrm{and}~ \tr(N) = \tr(M) \right\}.
\end{equation}
As the sublevel sets of the convex functions $s_\ell$ have tractable semidefinite descriptions \cite{ben2001lectures}, one can obtain a lifted polynomial-sized semidefinite representation of $\sh(M)$ for arbitrary $M \in \Sym^n$.  However, specifications of $\sh(M)$ via semidefinite representations of the sublevels sets of $s_\ell$ involve a total of $O(n)$ additional matrix variables in $\mathbb{S}^n$ and $O(n)$ semidefinite constraints (one for each of the majorization inequalities in \eqref{SH majorization}); in particular, these do not take advantage of any structure in the spectrum of $M$, such as multiplicities in the eigenvalues.

We discuss next an alternative semidefinite representation of $\sh(M)$ that is based on a modification of the description of $\sh(M)$ presented in \cite{ding2009low}, and it exploits the multiplicities in the eigenvalues of $M$ so that both the number of additional matrix variables and semidefinite constraints scale with the number of distinct eigenvalues of $M$ rather than the ambient size $n$ of $M$.  Suppose $M$ has $q$ distinct eigenvalues $\lambda_1,\dots,\lambda_q$ with multiplicities $m_1,\dots,m_q$. Then one can check that \cite{ding2009low}:
\begin{equation} \label{SHDescription2}
\begin{aligned}
\sh(M) = \Big\{N \in \mathbb{S}^n ~|~ & \exists Y_i \in \Sym^n, Y_i \succeq 0, ~ i = 1,\dots,q ~\mathrm{such~that} \\ & N = \sum\limits_{i=1}^{q} \lambda_i Y_i,~ \sum\limits_{i=1}^{q} Y_i = I_n, ~\tr(Y_i) = m_i ~\mathrm{for}~ i=1,\dots,q\Big\}.
\end{aligned}\end{equation}
In this latter description of the Schur-Horn orbitope, both the number of additional matrix variables in $\mathbb{S}^n$ and the number of semidefinite constraints are on the order of the number of distinct eigenvalues of $M$, which can be far smaller than $n$ for the adjacency matrices of graphs considered in this paper. In the numerical experiments presented next, we employ the description \eqref{SHDescription2} of the Schur-Horn orbitope.

%eigenvalues of the form $\sum_{i=1}^\ell \lambda^\downarrow_i(N) \leq t$ for $\ell=1,\dots,n-1$.  The epigraph of each of the functions $\sum_{i=1}^\ell \lambda^\downarrow_i(N)$ can be represented via semidefinite programming \cite{ben2001lectures}, which gives the following semidefinite description of the Schur-Horn orbitope:
%\begin{equation}  \label{SHDescription}
%\begin{aligned}
%\sh(M) = \Big\{N \in \mathbb{S}^n ~|~ & \mathrm{trace}(N) = \mathrm{trace}(M) ~\mathrm{and}~ \\ & \exists X_1,\dots,X_{n-1} \in \mathbb{S}^n, ~s_1,\dots,s_{n-1} \in \R \mathrm{~s.t.~} \\ & X_\ell \succeq 0, ~ \ell=1,\dots,n-1\\ & X_\ell-N+s_\ell I_n \succeq 0, ~ \ell=1,\dots,n-1 \\ & \sum_{i=1}^\ell \lambda^\downarrow_i(M)-\ell s_\ell-\mathrm{trace}(X) \geq 0, ~\ell=1,\dots,n-1 \Big\}.
%\end{aligned}
%\end{equation}
%Thus, the Schur-Horn orbitope $\sh(M)$ for any $M \in \mathbb{S}^n$ has a semidefinite representation involving $O(n)$ matrix variables in $\mathbb{S}^n$ and $O(n)$ semidefinite constraints.  Consequently, one can solve the Schur-Horn relaxation (\ref{PrimalOptimization}) to a desired precision in polynomial time.

\subsection{Experimental Results} \label{expresults}
\begin{figure*}
\centering
\small
\begin{tabular}{|c|c|c|}
\hline
{\bf Planted graph $\plant$} & {\bf Eigenvalues} & {\bf Kruskal rank of} \\
{\bf [with $\#$ vertices]} & {\bf [with multiplicity]} & {\bf largest eigenspace} \\
\hline \hline
  Clebsch [$k = 16$] & {\multirow{2}{*}{$5 [\times 1], -3 [\times 5], 1 [\times 10]$}} & {\multirow{2}{*}{$5$}} \\
  (Figure \ref{ClebschGraph}) & & \\
  \hline
  Generalized & {\multirow{4}{*}{$10 [\times 1], -5 [\times 6], 1 [\times 20]$}} & {\multirow{4}{*}{$8$}} \\
  quadrangular-$(2,4)$ & & \\
  $[k=27]$ & & \\
  (Figure \ref{fig:GQ24}) & & \\
  \hline
  $8$-Triangular $[k=28]$ & {\multirow{2}{*}{$12 [\times 1], 4 [\times 7], -2 [\times 20]$}} & {\multirow{2}{*}{$6$}} \\
  (Figure \ref{fig:Triangular8}) & & \\
  \hline
  $9$-Triangular $[k=36]$ & {\multirow{2}{*}{$14 [\times 1], 5 [\times 8], -2 [\times 27]$}} & {\multirow{2}{*}{$7$}} \\
  (Figure \ref{fig:Triangular9}) & & \\
  \hline
\end{tabular}
\caption{Planted subgraphs (and associated parameters) for which we demonstrate the utility of the Schur-Horn relaxation.  See Figure \ref{PhaseTransitions} for the associated phase transitions.} \label{experiments}
\end{figure*}

%\begin{enumerate}[leftmargin=*]
%\item{Clebsch graph (Figure \ref{ClebschGraph})}: This graph has $k = 16$ vertices and its spectrum consists of three distinct eigenvalues -- $5$ (with multiplicity $1$), $-3$ (with multiplicity $5$) and $1$ (with multiplicity $10$).  The Kruskal rank of its largest eigenspace is $5$.
%
%
%\item{Generalized quadrangular-$(2,4)$ graph (Figure \ref{fig:GQ24})}: This graph has $k = 27$ vertices and its spectrum consists of three distinct eigenvalues -- $10$ (with multiplicity $1$), $-5$ (with multiplicity $6$) and $1$ (with multiplicity $20$). The Kruskal rank of its largest eigenspace is $8$.
%
%
%\item{$8$-Triangular graph (Figure \ref{fig:Triangular8})}: This graph has $28$ vertices and its spectrum consists of three distinct eigenvalues -- $12$ (with multiplicity $1$), $4$ (with multiplicity $7$) and $-2$ (with multiplicity $20$).  The Kruskal rank of its largest eigenspace is $6$.
%
%
%\item{$9$-Triangular graph (Figure \ref{fig:Triangular9})}: This graph has $36$ vertices and its spectrum consists of three distinct eigenvalues -- $14$ (with multiplicity $1$), $5$ (with multiplicity $8$) and $-2$ (with multiplicity $27$).  The Kruskal rank of its largest eigenspace is $7$.
%\end{enumerate}

We investigate the performance of the Schur-Horn relaxation (\ref{PrimalOptimization}) in planted subgraph problems with the four planted subgraphs $\plant$ listed in Figure \ref{experiments}.  For each of these graphs, we set $\gamma$ equal to the eigenvalue corresponding to the largest eigenspace of the corresponding graph.  We vary $n$ (the size of the larger graph $\obs$ inside which $\plant$ is planted) and $p$ (the probability of a noise edge in $\obs$), and we obtain $10$ random instances of planted subgraph problems for each value of $n$ and $p$.  In Figure \ref{PhaseTransitions}, we plot the empirical probability of success of the Schur-Horn relaxation for these random trials; the white cells represent a probability of success of one and the black cells represent a probability of success of zero.  Our results were obtained using the CVX parser \cite{grant2008cvx,grant2008graph} and the SDPT3 solver \cite{toh1999sdpt3}.  In each of the four cases, the Schur-Horn relaxation (\ref{PrimalOptimization}) succeeds in solving the underlying planted subgraph problem for suitably small $n$ and $p$.

\begin{figure}[hbt]
\centering
   \subcaptionbox{\label{fig:ClebschPhase}}{\includegraphics[width=.40\linewidth, height=.3\linewidth]{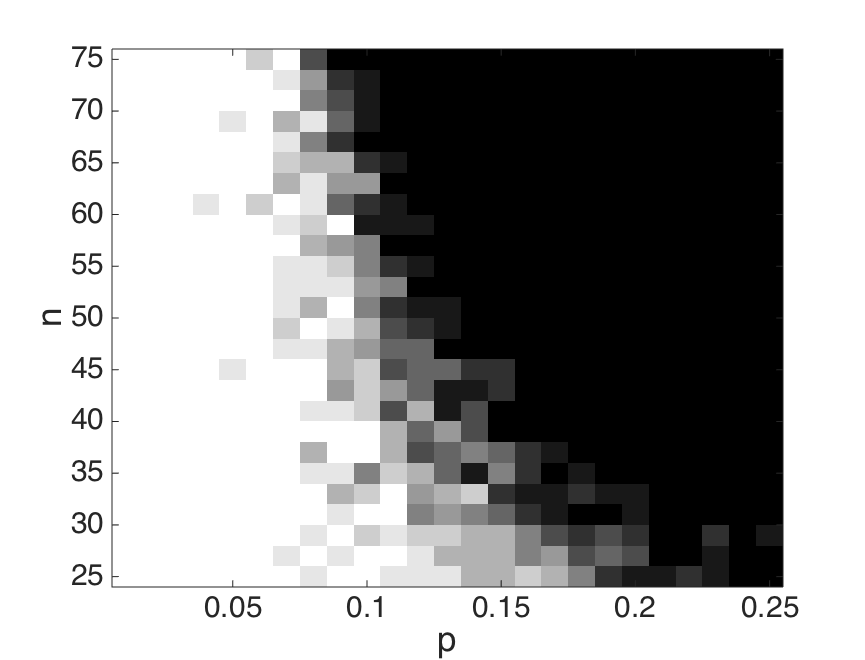}}%\hspace{0em}%
   \subcaptionbox{\label{fig:GQ24Phase}}{\includegraphics[width=.40\linewidth, height=.3\linewidth]{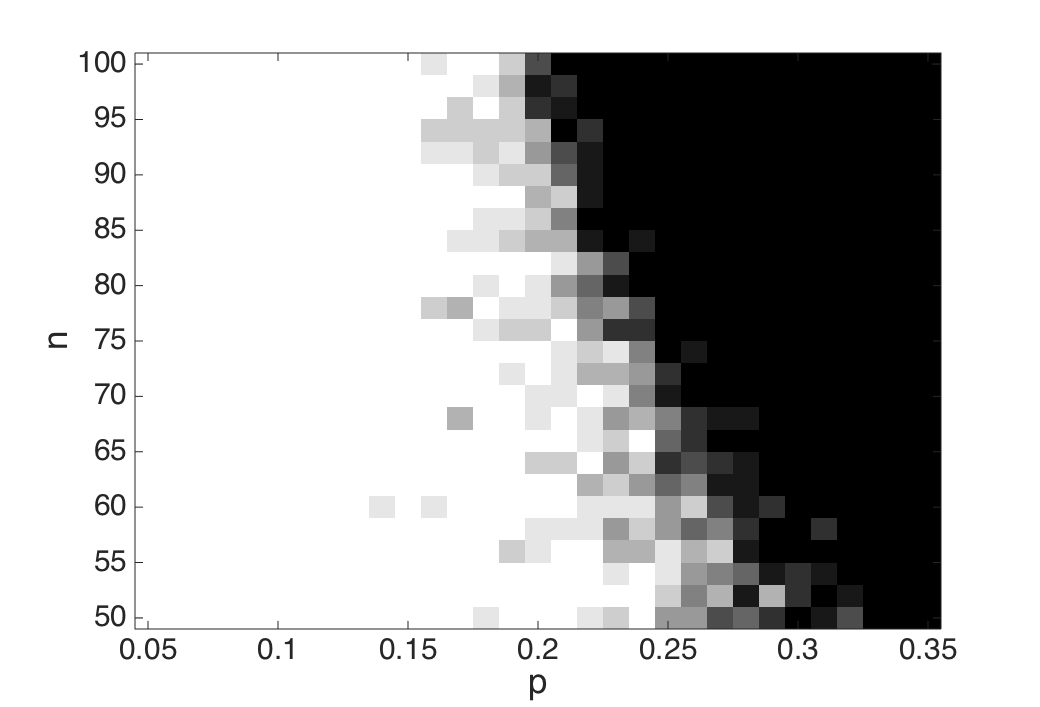}}  \\%\hspace{0em}%
   \subcaptionbox{\label{fig:Triangular8Phase}}{\includegraphics[width=.40\linewidth, height=.3\linewidth]{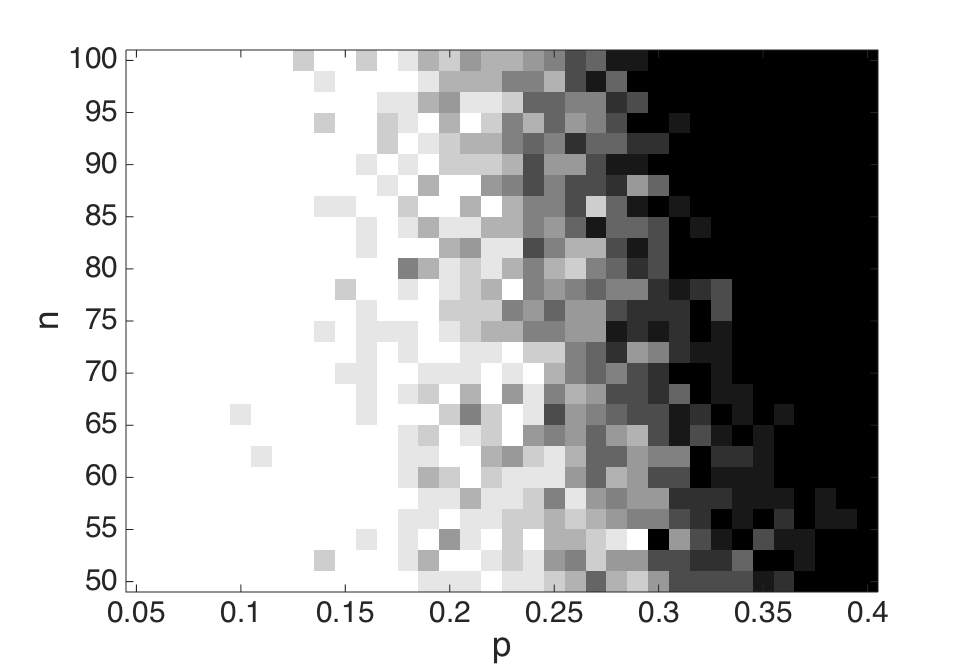}}
  \subcaptionbox{\label{fig:Triangular9Phase}}{\includegraphics[width=.40\linewidth, height=.3\linewidth]{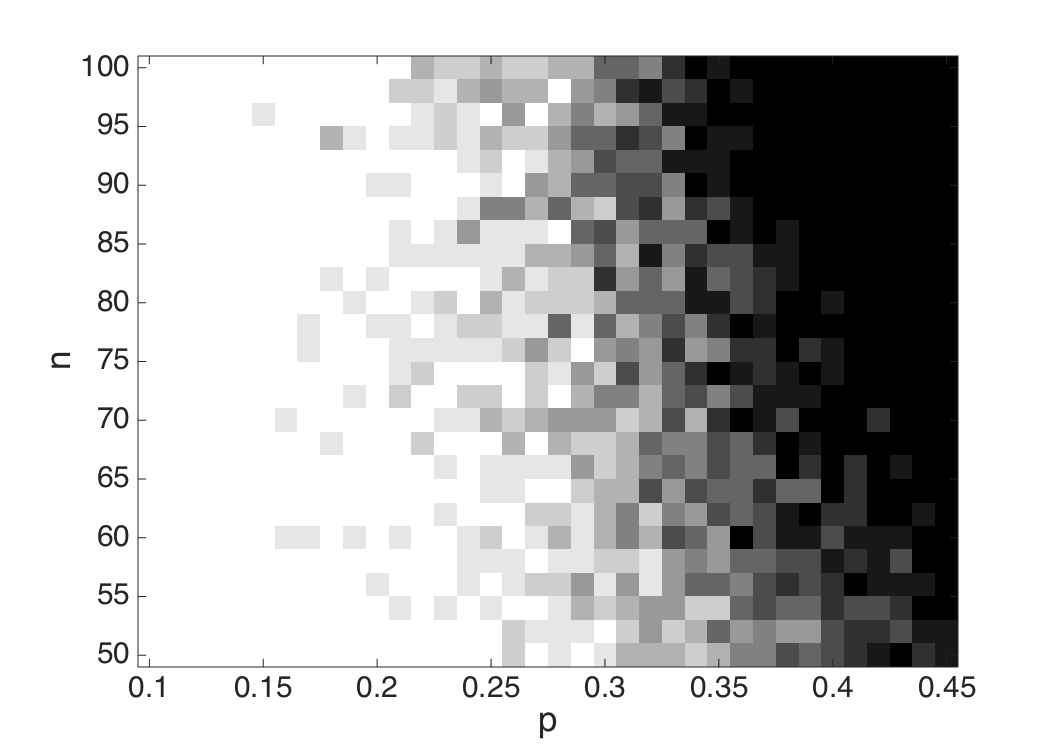}}
  \caption{Phase transition plots based on the experiment described in Section \ref{expresults} for the (\subref{fig:ClebschPhase}) Clebsch graph, (\subref{fig:GQ24Phase}) Generalized quadrangle-$(2,4)$ graph, (\subref{fig:Triangular8Phase}) $8$-Triangular graph,  and (\subref{fig:Triangular9Phase}) $9$-Triangular graph.} \label{PhaseTransitions}
\end{figure}

\section{Discussion} \label{SectionDiscussion}

In this paper, we introduce a new convex relaxation approach for the planted subgraph problem, and we describe families of problem instances for which our method succeeds.  Our method generalizes previous convex optimization techniques for identifying planted cliques based on nuclear norm minimization \cite{ames2011nuclear}, and it is useful for identifying plangted subgraphs consisting of few distinct eigenvalues.  There are several further directions that arise from our investigation, and we mention a few of these here.

\paragraph{Spectrally comonotone matrices with sparsity constraints} One of the ingredients in the proof of our main result Theorem \ref{Main Theorem} is to find a matrix $M_{11} \in \Sym^k$ that is spectrally comonotone with $A_\plant \in \Sym^k$, and that further satisfies the condition that $(M_{11})_{i,j} = 1$ whenever $(A_\plant)_{i,j} = 1$.  In the proof of Theorem \ref{Main Theorem} we simply choose $M_{11} = A_\plant$.  This choice does not exploit the fact that the entries of $M_{11}$ corresponding to those where $(A_\plant)_{i,j} = 0$ are not constrained (and in particular can be nonzero).  With a different choice of $M_{11}$, one could replace $\egap(A_\plant,\eig)$ in Theorem \ref{Main Theorem} by $\egap(M_{11},\eig)$ (recall that $\eig$ is an eigenspace of $A_\plant$).  Consequently, our main result could be improved via principled constructions of matrices $M_{11} \in \Sym^k$ that satisfy the conditions of Theorem \ref{Main Theorem} and for which $\egap(M_{11},\eig) > \egap(A_\plant,\eig)$.

%HoweverWe believe that our analysis brings about a number of interesting possible future research directions. For instance, one of the key ingredients in our proof was constructing a dual variable $M$ that has fixed entries on some indices, as well as being \emph{strictly spectrally comonotone} with the adjacency matrix representing the planted graph, see Proposition \ref{Optimality Conditions}. In the proof of Theorem \ref{Main Theorem}, we set $M_{11} = A_\plant - \gamma I_k$ and chose $M_{12}$ and $M_{22}$ with a small enough operator norm to ensure that both of these requirements are satisfied. However, this approach is not necessarily optimal, and other ways of constructing $M$ might lead to tighter probability bounds. Coming up with a more systematic way of constructing matrices which are strictly spectrally comonotone with a given matrix while having fixed entries on some indices, would enable us to understand the power and limitations of our method more transparently.

\paragraph{Sparse graphs with eigenspaces with large Kruskal rank} One of the central questions concerning the planted subgraph problem is the possibility of identifying `sparse' planted subgraphs inside `dense' noise via computationally tractable approaches.  Concretely, suppose $\plant$ is a regular graph with degree $d$. Under the Erd\H{o}s-R\'{e}nyi model for the noise, the average degree of any $k$-node subgraph of the larger graph $\obs$ is about $(k-1)p$.  From Theorem \ref{Main Theorem}, we have that the Schur-Horn relaxation succeeds (with high probability) in identifying $\plant$ if $p \in [0, \tfrac{\krank{\eig}}{k})$, where $\eig \subset \R^k$ is one of the eigenspaces of $\plant$.  In other words, (for suitably large $k$) if $d < \krank{\eig}$ then the Schur-Horn relaxation succeeds in identifying $\plant$ inside $\obs$ despite the fact that $\plant$ is sparser than a typical $k$-node subgraph in $\obs$.  Of the graphs we have investigated in this paper, the Clebsch graph from Figure \ref{ClebschGraph} is an example in which both the degree and the Kruskal rank of the largest subspace are equal to $5$.  For some of the other small graphs discussed in this paper, the degree is larger than the Kruskal ranks of the eigenspaces.  For larger graphs, the computation of the Kruskal rank of the large eigenspaces quickly becomes computationally intractable.  Therefore, it is of interest to identify graph families in which (by construction) the degree is smaller than the Kruskal rank of one of the eigenspaces.

\paragraph{Convex geometry and graph theory} In developing convex relaxations for the planted subgraph problem (based on the formulation \eqref{Unsolvable Opt}) as well as other inverse problems involving unlabeled graphs, the key challenge is one of obtaining tractable convex outer approximations of the set $\mathcal{A}(B) = \{\Pi B \Pi' ~|~ \Pi ~\mathrm{is~an}~ n \times n ~\mathrm{permutation~matrix} \}$ for some given adjacency matrix $B \in \Sym^n$.  In particular, a convex approximation $\mathcal{C}$ that contains $\mathcal{A}(B)$ is useful if the normal cone $\mathcal{N}_{\mathcal{C}}(B)$ is large; as an example, the Schur-Horn relaxation $\sh(B)$ has this property for adjacency matrices $B$ with few distinct eigenvalues.  More generally, what is an appropriate convex relaxation for other structured graph families such as low-treewidth graphs (arising in inference in statistical graphical models), or graphs with a specified degree distribution (arising in social network analysis)?  Recent work \cite{chandrasekaran2012convex} provides a catalog of \emph{convex graph invariants} that are useful for obtaining computationally tractable convex relaxations of $\mathcal{A}(B)$.  A deeper investigation of the interaction between convex-geometric aspects of these invariants (such as the normal cones of the associated convex relaxations) and the structural properties of the graph specified by the adjacency matrix $B$ has the potential to yield new convex relaxations for general inverse problems on graphs.

%
%Another interesting future direction would be to understand which other \emph{convex graph invariants} can be incorporated in identifying graphs with specific structures. Convex graph invariants are convex functions of graphs that are permutation invariant, see \cite{chandrasekaran2012convex} for more on convex graph invariants. The permutation invariance property of convex graph invariants is essential in the hidden subgraph identification problem, as the labeling of the nodes are ambiguous. As discussed in \cite{chandrasekaran2012convex}, there are many convex invariant sets, with possibly numerous applications waiting to be discovered. An example of the convex invariant sets is the scaled nuclear norm ball, and results in \cite{ames2011nuclear} imply that the scaled nuclear norm ball is an advantageous set to optimize over for identifying cliques, as it favors low rank matrices. In this paper, we used another convex invariant set for optimizing over, namely the Schur-Horn orbitope, and showed that it favors graphs with few and well separated eigenvalues. It is of particular interest to comprehend whether other convex invariant sets highlight other properties of graphs. Obtaining a better understanding of convex invariant sets might enable us to come up with a unifying convex optimization theory for the hidden subgraph problem.

\bibliographystyle{plain}
\bibliography{SchurHornBib}

\begin{thebibliography}{10}

\bibitem{alon1998finding}
Noga Alon, Michael Krivelevich, and Benny Sudakov.
\newblock Finding a large hidden clique in a random graph.
\newblock {\em Random Structures and Algorithms}, 13(3-4):457--466, 1998.

\bibitem{ames2011nuclear}
Brendan P.~W. Ames and Stephen~A. Vavasis.
\newblock Nuclear norm minimization for the planted clique and biclique
  problems.
\newblock {\em Mathematical Programming}, 129(1):69--89, 2011.

\bibitem{artymiuk1994graph}
Peter~J. Artymiuk, Andrew~R. Poirrette, Helen~M. Grindley, David~W. Rice, and
  Peter Willett.
\newblock A graph-theoretic approach to the identification of three-dimensional
  patterns of amino acid side-chains in protein structures.
\newblock {\em Journal of Molecular Biology}, 243(2):327--344, 1994.

\bibitem{BandeiraSpectralNormIndependentEntries}
Afonso~S. Bandeira and Ramon van Handel.
\newblock Sharp nonasymptotic bounds on the norm of random matrices with
  independent entries.
\newblock {\em Available online at arXiv:1408.6185 [math.PR]}, 2014.

\bibitem{bannai1984algebraic}
Eiichi Bannai and Tatsuro Ito.
\newblock {\em Algebraic Combinatorics}.
\newblock Benjamin/Cummings, 1984.

\bibitem{ben2001lectures}
Aharon Ben-Tal and Arkadi Nemirovski.
\newblock {\em Lectures on Modern Convex Optimization: Analysis, Algorithms,
  and Engineering Applications}, volume~2.
\newblock S{I}{A}{M}, 2001.

\bibitem{bose1963strongly}
Raj~C. Bose.
\newblock Strongly regular graphs, partial geometries and partially balanced
  designs.
\newblock {\em Pacific Journal of Mathematics}, 13(2):389--419, 1963.

\bibitem{bridges1981multiplicative}
W.G. Bridges and R.A. Mena.
\newblock Multiplicative cones —- a family of three eigenvalue graphs.
\newblock {\em Aequationes Mathematicae}, 22(1):208--214, 1981.

\bibitem{brouwer2012distance}
Andries~E. Brouwer and Willem~H. Haemers.
\newblock {\em Distance-regular graphs}.
\newblock Springer, 2012.

\bibitem{brouwer1984strongly}
Andries~E. Brouwer and Jacobus~H. van Lint.
\newblock Strongly regular graphs and partial geometries.
\newblock {\em Enumeration and Design}, pages 85--122, 1984.

\bibitem{cameron1978strongly}
Peter~J. Cameron.
\newblock Strongly regular graphs.
\newblock {\em Selected Topics in Graph Theory}, 1:pp--337, 1978.

\bibitem{candes2009exact}
Emmanuel~J. Cand{\`e}s and Benjamin Recht.
\newblock Exact matrix completion via convex optimization.
\newblock {\em Foundations of Computational Mathematics}, 9(6):717--772, 2009.

\bibitem{chandrasekaran2012convex}
Venkat Chandrasekaran, Pablo~A. Parrilo, and Alan~S. Willsky.
\newblock Convex graph invariants.
\newblock {\em SIAM Review}, 54(3):513--541, 2012.

\bibitem{chang1994imprimitive}
Yaotsu Chang.
\newblock {\em Imprimitive symmetric association schemes of rank 4}.
\newblock PhD thesis, Thesis, University of Michigan, 1994.

\bibitem{ding2009low}
Yichuan Ding and Henry Wolkowicz.
\newblock A low-dimensional semidefinite relaxation for the quadratic
  assignment problem.
\newblock {\em Mathematics of Operations Research}, 34(4):1008--1022, 2009.

\bibitem{dobrin2004aggregation}
Radu Dobrin, Qasim~K. Beg, Albert-L{\'a}szl{\'o} Barab{\'a}si, and
  Zolt{\'a}n~N. Oltvai.
\newblock Aggregation of topological motifs in the escherichia coli
  transcriptional regulatory network.
\newblock {\em BMC Bioinformatics}, 5(1):10, 2004.

\bibitem{donoho2003optimally}
David~L. Donoho and Michael Elad.
\newblock Optimally sparse representation in general (nonorthogonal)
  dictionaries via $\ell$1 minimization.
\newblock {\em Proceedings of the National Academy of Sciences},
  100(5):2197--2202, 2003.

\bibitem{doob1970graphs}
Michael Doob.
\newblock Graphs with a small number of distinct eigenvalues.
\newblock {\em Annals of the New York Academy of Sciences}, 175(1):104--110,
  1970.

\bibitem{doob1970characterizing}
Michael Doob.
\newblock On characterizing certain graphs with four eigenvalues by their
  spectra.
\newblock {\em Linear Algebra and its Applications}, 3(4):461--482, 1970.

\bibitem{fazel2002matrix}
Maryam Fazel.
\newblock {\em Matrix rank minimization with applications}.
\newblock PhD thesis, Stanford University, 2002.

\bibitem{feige2000finding}
Uriel Feige and Robert Krauthgamer.
\newblock Finding and certifying a large hidden clique in a semirandom graph.
\newblock {\em Random Structures and Algorithms}, 16(2):195--208, 2000.

\bibitem{FinBR1987}
Gerd Finke, Rainer~E. Burkard, and Franz Rendl.
\newblock Quadratic assignment problems.
\newblock {\em Surveys in combinatorial optimization}, 132:61, 2011.

\bibitem{godsil1993algebraic}
Chris Godsil.
\newblock {\em Algebraic Combinatorics}, volume~6.
\newblock CRC Press, 1993.

\bibitem{grant2008cvx}
Michael~C. Grant and Stephen~P. Boyd.
\newblock {C}{V}{X}: Matlab software for disciplined convex programming.

\bibitem{grant2008graph}
Michael~C. Grant and Stephen~P. Boyd.
\newblock Graph implementations for nonsmooth convex programs.
\newblock In {\em Recent Advances in Learning and Control}, pages 95--110.
  Springer, 2008.

\bibitem{haemers1996spreads}
Willem~H. Haemers and Vladimir~D. Tonchev.
\newblock Spreads in strongly regular graphs.
\newblock {\em Designs, Codes and Cryptography}, 8(1-2):145--157, 1996.

\bibitem{karp1972reducibility}
Richard~M. Karp.
\newblock {\em Reducibility among combinatorial problems}.
\newblock Springer, 1972.

\bibitem{krivelevich2006pseudo}
Michael Krivelevich and Benny Sudakov.
\newblock Pseudo-random graphs.
\newblock In {\em More Sets, Graphs and Numbers}, pages 199--262. Springer,
  2006.

\bibitem{kruskal1977three}
Joseph~B. Kruskal.
\newblock Three-way arrays: rank and uniqueness of trilinear decompositions,
  with application to arithmetic complexity and statistics.
\newblock {\em Linear Algebra and Its Applications}, 18(2):95--138, 1977.

\bibitem{leskovec2010empirical}
Jure Leskovec, Kevin~J. Lang, and Michael Mahoney.
\newblock Empirical comparison of algorithms for network community detection.
\newblock In {\em Proceedings of the 19th international conference on World
  wide web}, pages 631--640. ACM, 2010.

\bibitem{mason2007graph}
Oliver Mason and Mark Verwoerd.
\newblock Graph theory and networks in biology.
\newblock {\em Systems Biology, IET}, 1(2):89--119, 2007.

\bibitem{mathon19753}
Rudolf Mathon.
\newblock 3-class association schemes.
\newblock In {\em Proceedings of the Conference on Algebraic Aspects of
  Combinatorics}, pages 123--155, 1975.

\bibitem{mishra2007clustering}
Nina Mishra, Robert Schreiber, Isabelle Stanton, and Robert~E. Tarjan.
\newblock Clustering social networks.
\newblock In {\em Algorithms and Models for the Web-Graph}, pages 56--67.
  Springer, 2007.

\bibitem{muzychuk1998graphs}
Mikhail Muzychuk and Mikhail Klin.
\newblock On graphs with three eigenvalues.
\newblock {\em Discrete Mathematics}, 189(1):191--207, 1998.

\bibitem{radicchi2004defining}
Filippo Radicchi, Claudio Castellano, Federico Cecconi, Vittorio Loreto, and
  Domenico Parisi.
\newblock Defining and identifying communities in networks.
\newblock {\em Proceedings of the National Academy of Sciences of the United
  States of America}, 101(9):2658--2663, 2004.

\bibitem{recht2010guaranteed}
Benjamin Recht, Maryam Fazel, and Pablo~A. Parrilo.
\newblock Guaranteed minimum-rank solutions of linear matrix equations via
  nuclear norm minimization.
\newblock {\em SIAM review}, 52(3):471--501, 2010.

\bibitem{rockafellar2015convex}
Ralph~T. Rockafellar.
\newblock {\em Convex Analysis}.
\newblock Princeton University Press, 2015.

\bibitem{sanyal2011orbitopes}
Raman Sanyal, Frank Sottile, and Bernd Sturmfels.
\newblock Orbitopes.
\newblock {\em Mathematika}, 57(02):275--314, 2011.

\bibitem{seidel1969strongly}
Johan~J. Seidel.
\newblock Strongly regular graphs.
\newblock {\em Recent Progress in Combinatorics}, pages 185--198, 1969.

\bibitem{toh1999sdpt3}
Kim-Chuan Toh, Michael~J. Todd, and Reha~H. T{\"u}t{\"u}nc{\"u}.
\newblock {S}{D}{P}{T}3 —- a matlab software package for semidefinite
  programming, version 1.3.
\newblock {\em Optimization Methods and Software}, 11(1-4):545--581, 1999.

\bibitem{van1995regular}
Edwin~R. van Dam.
\newblock Regular graphs with four eigenvalues.
\newblock {\em Linear Algebra and Its Applications}, 226:139--162, 1995.

\bibitem{van1996graphs}
Edwin~R. van Dam.
\newblock {\em Graphs with few eigenvalues. An interplay between combinatorics
  and algebra}.
\newblock PhD thesis, 1996.

\bibitem{van1998small}
Edwin~R. van Dam and Edward Spence.
\newblock Small regular graphs with four eigenvalues.
\newblock {\em Discrete Mathematics}, 189(1):233--257, 1998.

\bibitem{vershynin2012introduction}
Roman Vershynin.
\newblock Introduction to the non-asymptotic analysis of random matrices.
\newblock In {\em Compressed Sensing}, chapter~5, pages 210--268. Cambridge
  University Press, Cambridge, 2012.

\bibitem{neumann1937some}
John von Neumann.
\newblock Some matrix inequalities and metrization of matrix space.
\newblock {\em Tomsk Univ. Rev}, 1(11):286--300, 1937.

\end{thebibliography}

\if 0
\section{Appendix}
In the appendix, we further investigate the connection between our method and Ames and Vavasis' nuclear norm minimization approach \cite{ames2011nuclear}, for the special case of the planted $k$-clique problem. In order to identify a $k$-clique, Ames and Vavasis propose solving the following optimization problem:
\begin{equation}\tag{AV}\label{AV Optimization}
\begin{aligned}
\min_{A\in \Sym^n}& ~~~~~~~ \norm{A}_{*}  \\
\mathrm{s.t.} & ~ \sum\limits_{i,j \in \{1,\dots,n\}}  A_{i,j} \geq k^2,\\
&A_{i,j}= 0 \text{  if  } (A_\obs)_{i,j} = 0, ~ i \neq j.
\end{aligned}
\end{equation}
We establish the claim below:

\begin{proposition}
Consider an instance of the planted $k$-clique problem. The Schur-Horn relaxation (\ref{PrimalOptimization}) succeeds in uniquely identifying the planted $k$-clique whenever the optimization program (\ref{AV Optimization}) succeeds.
\end{proposition}
\begin{proof}
We establish the result by presenting a sequence of optimization problems which relate the optimization problem (\ref{AV Optimization}) to the Schur-Horn relaxation (\ref{PrimalOptimization}). Assume that the hidden $k$-clique (with 1's on the diagonal) is the unique optimal solution of (\ref{AV Optimization}). Consider the first intermediate optimization problem below:
\begin{equation}\tag{$I1$} \label{Inter1}
\begin{aligned}
\max  & ~~~~~~~ \sum\limits_{i,j \in \{1,\dots,n\}}  A_{i,j}  \\
\mathrm{s.t.} & ~ \sum\limits_{i,j \in \{1,\dots,n\}}  A_{i,j} \geq k^2,\\
&A_{i,j}= 0 \text{  if  } (A_\obs)_{i,j} = 0, ~ i \neq j,\\
&\norm{A}_* = k.
\end{aligned}
\end{equation}
If the $k$-clique is the unique optimal solution of (\ref{AV Optimization}), then (\ref{Inter1}) has a single feasible point. That is because of the additional constraint on the nuclear norm of the variable, and  the fact that the nuclear norm of the $k$-clique (with 1's on the diagonal) is equal to $k$. Further, the only feasible point of (\ref{Inter1}) is the adjacency matrix of the planted clique.

Given that the optimization problem (\ref{Inter1}) is feasible, its first constraint is redundant, as that constraint and the objective function overlap. Moreover, one can replace the objective function with $\tr(A \cdot A_\obs)$, since by the planted model, $A_\obs$ is equal to $1$ on every index where $A$ is equal to $1$, i.e., $\sum\limits_{i,j \in \{1,\dots,n\}}  A_{i,j} = \tr(A \cdot 1_k 1_k^T) = \tr(A \cdot A_\obs)$. These modifications lead to the second intermediate optimization problem given below:
\begin{equation}\tag{$I2$} \label{Inter2}
\begin{aligned}
\max  & ~~~~~~~ \tr(A \cdot A_\obs)  \\
\mathrm{s.t.} & ~~~A_{i,j}= 0 \text{  if  } (A_\obs)_{i,j} = 0, ~ i \neq j,\\
&~~~\norm{A}_* = k.
\end{aligned}
\end{equation}
The optimization problem  (\ref{Inter2}) is feasible in a potentially bigger set than the optimization problem (\ref{Inter1}), but its unique optimal value is still attained by the same matrix -- the adjacency matrix of the planted clique.

Now consider adding the constraint $A \succeq 0$ to the constraint set of the optimization problem (\ref{Inter2}). Since the adjacency matrix of the $k$-clique (including the 1's on the diagonal) satisfies this constraint, it is still the unique optimal solution of the resulting problem. Furthermore, under the positive semidefiniteness of $A$, one can replace the nuclear norm constraint $\norm{A}_*=k$ with the trace constraint $\tr{(A)}=k$. With this final change, we obtain exactly the Schur-Horn relaxation (\ref{PrimalOptimization}), where the Schur-Horn orbitope is as described in equation (\ref{SHClique}).
\end{proof}

\fi

\end{document}